\documentclass[a4paper,12pt,reqno]{amsart}
\usepackage{amsmath,amsfonts,amssymb,amsthm,enumerate,multicol}
\usepackage{mathrsfs}
\usepackage{tikz,graphicx}
\usepackage{hyperref}
\usepackage{caption,enumitem,wasysym}
\usepackage{cleveref}
\usepackage{epstopdf}
\usepackage{float,wrapfig}
\usepackage[labelsep=space]{caption}
\usepackage[font=footnotesize]{caption}
\usepackage{xcolor}
\DeclareFontShape{OT1}{cmr}{bx}{sc}{<->cmcsc10}{}
\makeatletter
\usepackage{verbatim}
\DeclareFontFamily{U}{wasy}{}
\DeclareFontShape{U}{wasy}{m}{n}{<->wasy10}{}
\DeclareFontShape{U}{wasy}{b}{n}{<->wasy10}{}
\makeatother
\usepackage{cases}
\usepackage[english]{babel}
\usepackage[autostyle]{csquotes}
\usepackage{cite}
\theoremstyle{plain}
\newtheorem{thm}{Theorem}[section]

\newtheorem{lem}[thm]{Lemma}
\newtheorem{prop}[thm]{Proposition}

\newtheorem{defn}[thm]{Definition}
\newtheorem{rem}[thm]{Remark}

\usepackage{mathtools}
\usepackage{amsmath,amsfonts,amssymb,amsthm,enumerate,multicol}
\usepackage{tikz}
\usepackage{float}
\usepackage{autobreak}

\allowdisplaybreaks

\numberwithin{equation}{section}

\begin{document} 
	\begin{center}
		\Large{\textbf{The Diffusive Exchange Driven Growth Model with unbounded kernels}}
	\end{center} 
	\medskip
	\medskip
	\centerline{${\text{ ${\text{Saumyajit Das$^{\dagger*}$}}$ and  Ram Gopal Jaiswal$^{\ddagger*}$}}$}\let\thefootnote\relax\footnotetext{$^{*}$Corresponding author.  \newline{\it{${}$ \hspace{.3cm} Email address: }}saumyajit.math.das@gmail.com (S. Das), maths.ram07@gmail.com (R. G. Jaiswal)}
	\medskip
	{\footnotesize

		\centerline{ ${}^{}$  $\dagger$ Department of Mathematics, Harish-Chandra Research Institute,}
		\centerline{ A CI of Homi
			Bhabha National Institute, Chhatnag Road,}
		\centerline{Jhunsi, Prayagraj 211019, India.}
		\centerline{ ${}^{}$  $\ddagger$ Department of Mathematics, Indian Institute of Technology Bombay,}
		\centerline{Powai, Mumbai, Maharashtra 400076, India}
		
	}
\bigskip
\begin{quote}
{\small {\em \bf Abstract.} We study the discrete diffusive exchange-driven growth (EDG) equations on a bounded smooth domain of arbitrary dimension subject to homogeneous Neumann boundary conditions. The system belongs to the class of infinite systems of semilinear partial differential equations with nonlinear source terms of quadratic type. Global-in-time existence of solutions is established for separable exchange kernels of the form \(K_{i,j}=b_i a_j\), where the donor rates exhibit at most linear growth while the receiver rates are sublinear. The analysis is based on a uniform Fisher information estimate obtained from an entropy-entropy dissipation identity. This estimate yields renormalized solutions to a truncated system with finitely many species. A compactness argument then enables passage to the limit in the exchange operator, leading to the existence of global-in-time solutions for the full system.
}
\end{quote}
\vspace{0.5cm}
\textbf{Keywords.} Exchanged driven growth, mean field equations, aggregation kinetics, Diffusion, Fisher Information

\textbf{AMS subject classifications.} 35A01; 35B45; 35D30; 35K51; 35K55; 35K57; 35Q92; 82C22
\section{Introduction}\label{sec:intro}
Cluster growth phenomena arise naturally in a wide range of physical, chemical, and biological systems, and mathematical models describing the evolution of cluster-size distributions have received considerable attention over the past decades. Among the most extensively studied are the coagulation-fragmentation equations, which describe clusters undergoing binary merging and splitting events. Depending on the mechanism, fragmentation may occur spontaneously due to external forces, internal instabilities, or collisions between particles. The coagulation-fragmentation mechanism arises in many applications, including polymerization, aerosol science, cloud formation, and colloidal dynamics, and a substantial mathematical theory has been developed addressing existence, uniqueness, mass conservation, gelation, and long-time behavior \cite{bll2019}.

Another important mechanism of cluster growth is the exchange-driven growth (EDG) process \cite{Naim2003, IKR1998}. In contrast to coagulation–fragmentation dynamics, EDG describes the exchange of single-size units, called monomers, between clusters. More precisely, the interaction between a donor cluster of size \(i \geq 1\) and a receiver cluster of size \(j \geq 0\) is governed by the symbolic rule
\begin{align*}
\langle i \rangle \oplus \langle j \rangle \;\longrightarrow\; \langle i-1 \rangle \oplus \langle j+1 \rangle.
\end{align*}
This mechanism arises in several physical contexts and has been used to model social phenomena such as migration processes \cite{JZ2003}, population dynamics \cite{LR2002}, and wealth exchange \cite{IKR1998}. From a kinetic perspective, EDG may be viewed as a highly restricted collision-induced mass transfer process \cite{safronov1972, list1976}. Within the framework of collision-induced breakage equations, whose dynamics are governed by a collision kernel together with a daughter distribution describing the post-collision mass distribution, EDG is recovered by choosing a daughter distribution concentrated on this unique unit-mass transfer event.

\medskip
This article is devoted to the existence of weak solutions to the diffusive EDG equation, which describes evolution of the density of clusters of size $i$ at position $x \in \Omega$ and time $t \geq 0$, denoted by $f = (f_i(t,x))_{i \in \mathbb{N}\cup\{0\}}$, where $\Omega \subset \mathbb{R}^d$ is a bounded domain with smooth boundary $\partial\Omega$. The diffusive EDG equation reads as
\begin{equation}\label{main equation}
\begin{cases}
\partial_t f_i - d_i \Delta f_i = Q_i(f),
& \mbox{in}\ (0,\infty) \times \Omega,\; i \in \mathbb{N}\cup\{0\} \\[0.4em]
\nabla f_i \cdot n = 0,
&\mbox{on}\ (0,\infty) \times \partial\Omega,\; i \in \mathbb{N}\cup\{0\} \\[0.4em]
f_i(0,x) = f_{i,0}(x) \geq 0,
& \mbox{in}\ \Omega,\; i \in \mathbb{N}\cup\{0\},
\end{cases}
\end{equation}
where $n(x)$ denotes the unit outward normal at the point $x$ at the boundary $\partial\Omega$ and the exchange operator $Q_i(f)$ is given by
\begin{equation}\label{Q0 operator}
Q_0(f):= f_1 \sum_{j=0}^{\infty} K_{1,j} f_j
- f_0 \sum_{j=1}^{\infty} K_{j,0} f_j,
\end{equation}
\begin{align}\label{Qi operator}
Q_i(f):= \;& f_{i+1} \sum_{j=0}^{\infty} K_{i+1,j} f_j
+ f_{i-1} \sum_{j=1}^{\infty} K_{j,i-1} f_j \notag \\
& - f_i \sum_{j=0}^{\infty} K_{i,j} f_j
- f_i \sum_{j=1}^{\infty} K_{j,i} f_j, \qquad i\in \mathbb{N}.
\end{align}

In equation \eqref{main equation}-\eqref{Qi operator}, the rate at which a monomer is transferred from a donor of size \(i\) to a receiver of size \(j\) is represented by \(K_{i,j} \geq 0\) and $d_i$ represents the diffusion coefficient. Owing to the single-monomer exchange mechanism and the no-flux boundary condition, the EDG system \eqref{main equation} formally preserves both the total number of clusters and the total mass, that is,
\begin{equation}\label{conservation}
\sum_{i=0}^{\infty} \int_{\Omega} f_i(t,x)\, dx = \sum_{i=0}^{\infty} \int_{\Omega} f_{i,0}(x)\, dx,
\qquad
\sum_{i=1}^{\infty} i \int_{\Omega} f_i(t,x)\, dx = \sum_{i=1}^{\infty} i \int_{\Omega} f_{i,0}(x)\, dx,
\end{equation}
for all \(t \geq 0\). In \cite{Naim2003}, it is observed that conservation of the total mass may fail when \(K_{i,j}\) grows sufficiently fast, due to the formation of an infinite-size cluster in the case of a spatially homogeneous setting (i.e., without diffusion). For the spatially homogeneous EDG model with density \((f_i(t))_{i \in \mathbb{N}\cup\{0\}}\). The gelation time is defined by
\begin{equation}\label{gelation time}
T_{\mathrm{gel}} := \inf\!\left\{\, t \geq 0 \;:\; \sum_{i=1}^{\infty} i\, f_i(t) < \sum_{i=1}^{\infty} i\, f_{i,0} \,\right\}.
\end{equation}
Finite-time gelation refers to the case $0 < T_{\mathrm{gel}} < \infty$, while instantaneous gelation refers to $T_{\mathrm{gel}} = 0$. In the diffusive setting, the same notion is recovered by replacing $f_i(t)$ in \eqref{gelation time} with the spatial average $\int_\Omega f_i(t,x)\,dx$.

The first rigorous analysis of EDG models without diffusion is carried out in \cite{E2018}. The well-posedness of EDG models without diffusion is established under the assumption $K(j,k)\le Cjk$, and this result is extended to the class of symmetric kernels $K(j,k)\le C(j^\mu k^\nu+j^\nu k^\mu)$ with $\mu,\nu\le 2$, $\mu+\nu\le 3$. It is also shown that kernels growing like $K(j,k)\ge Cj^\beta$ with $\beta>2$ admit no solution. At the same time, in the intermediate regime $3<\mu+\nu\le 4$ only local existence has been shown, with finite-time gelation left as a conjecture. Later, in \cite{S2020, ES2021}, the well-posedness issue has been addressed for the large class of kernels under a more general class of initial data. Recently, the well-posedness for the class of kernels investigated in \cite{E2018} has been established in \cite{si2025} by removing the higher-moment limitation on the starting data. The gelation conjecture stated in \cite{E2018} has also been addressed to some extent. The long-time behavior of the spatially homogeneous EDG system has been investigated in \cite{EV2021} for a large class of kernels. Specifically, an explicit family of equilibrium solutions has been obtained under the assumption condition \(K(j,k)=b_j a_k\)  and also convergence to equilibrium has been shown.

These investigations are, however, confined to the spatially homogeneous setting in which cluster concentrations depend only on cluster size and time, and the evolution reduces to an infinite-dimensional system of ordinary differential equations. To the best of our knowledge, the discrete EDG system with spatial diffusion has not yet been investigated rigorously, which motivates the present work. The spatial extension of the coagulation–fragmentation equation provides a useful precedent: results on well-posedness, mass conservation, and long-time behavior have been obtained along lines that depart substantially from the homogeneous theory and rely on tools from the theory of reaction–diffusion systems, including duality estimates, entropy methods, and semigroup techniques \cite{wrzosek2002, laurenccot2002, laurencot2002continuous, amann2005local, walker2004discrete, walker2005new, canizo2010regularity, canizo2010absence}.

The EDG exchange operator is bilinear in the densities and couples infinitely many cluster sizes; in this respect, it resembles the coagulation term of a coagulation-fragmentation system. A structural feature on which much of the coagulation--fragmentation theory rests is, however, absent. In the coagulation equation, the equation governing the monomer density has a sign-definite negative right-hand side, since monomers cannot be created by coagulation; this yields an immediate a priori bound on the monomer density, from which the densities of larger clusters are controlled by induction on the cluster size. When fragmentation is added, the monomer equation is no longer purely a loss equation, but the argument can still be carried through provided the breakage rate is suitably controlled. In the EDG equation, however, this approach fails at the outset, since the exchange mechanism produces monomers whenever a cluster of size two sheds a particle, and the equation governing the monomer density is therefore not sign-definite even in principle.

The same structural obstruction was already encountered in the diffusive coagulation with nonlinear fragmentation in \cite{canizo2010regularity, canizo2010absence}, where it was overcome by means of an \(L^2\) estimate on the total mass density. This duality-based approach provides an alternative route to existence results both for nonlinear fragmentation models and for coagulation-fragmentation systems with diffusion, but it is restricted to the one-dimensional case. The same strategy can be adapted to the diffusive EDG equation in one space dimension under suitable assumptions on the kernel. In the present work, we focus on the existence of weak solutions in arbitrary space dimension, which calls for a different set of estimates. To this end, we assume that the exchange kernel
\[
(K_{i,j})_{i,j\in\mathbb{N}\cup\{0\}}
=(b_i a_j)_{i,j\in\mathbb{N}\cup\{0\}}
\]
is nonnegative and satisfies
\begin{align}\label{sectorwise bound of kernel}
    b_* (i+1)\leq b_i\leq b^*(i+1)
    \quad \text{and} \quad
    0<a_j\leq a^*(j+1)^{\alpha},
    \qquad
    \alpha\in(0,1),
    \quad
    \forall\, i,j\in \mathbb{N}\cup\{0\},
\end{align}
for some constants \(a^*, b^*, b_*>0\). As an immediate consequence of the above assumption, we have
\begin{equation}\label{H_kernel}
\sup_{i,j\in\mathbb{N}\cup\{0\}}\frac{K_{i,j}}{(i+1)(j+1)}<\infty,
\qquad
\lim_{j\to\infty}\frac{K_{i,j}}{(i+1)(j+1)}=0,
\quad
\text{for every } i\in\mathbb{N}\cup\{0\}.
\end{equation}

The diffusion coefficients \((d_i)_{i\in\mathbb{N}\cup\{0\}}\) are assumed to be uniformly bounded from above and below. More precisely, there exist constants \(0<d_*\le d^*<\infty\) such that
\begin{equation}\label{H_diff}
d_* \le d_i \le d^*,
\qquad
i\in\mathbb{N}\cup\{0\}.
\end{equation}

The initial data \((f_{i,0})_{i\in\mathbb{N}\cup\{0\}}\) are assumed to be nonnegative and satisfy
\begin{equation}\label{H_init}
\sum_{i\in\mathbb{N}\cup\{0\}}(i+1)\,f_{i,0}\in L^2(\Omega).
\end{equation}

In addition, we assume that
\begin{align}\label{def:entropy in}
\sum_{i\in\mathbb{N}\cup\{0\}}\int_{\Omega}
f_{i,0}\log\left(\frac{f_{i,0}}{\mathcal{Q}_i}\right)
<\infty,
\end{align}
where the sequence \((\mathcal{Q}_j)_{j\in\mathbb{N}\cup\{0\}}\) is defined recursively by
\[
\mathcal{Q}_0=1,
\qquad
\mathcal{Q}_j=\frac{a_{j-1}}{b_j}\,\mathcal{Q}_{j-1},
\qquad
j\ge1.
\]

It is worth mentioning that assumption \eqref{def:entropy in} is required to derive the entropy and Fisher information estimates (see \eqref{entropy diss ineq RTS}-\eqref{uniform bound on fisher info RTS} for details).

\medskip

Next, we introduce the notion of a weak solution to the diffusive EDG equation.
\begin{defn}[Weak solution]\label{definition weak solution}
Let $T>0$. A family $(f_i)_{i\in\mathbb{N}\cup\{0\}}$ with
\[
f_i \in L^1\!\left((0,T); W^{1,1}(\Omega)\right)  \cap L^2\bigl((0,T);L^2(\Omega)\bigr),
\qquad i\in\mathbb{N}\cup\{0\},
\]
is called a weak solution to the system \eqref{main equation} with non-negative initial datum
$(f_{i,0})_{i\in\mathbb{N}\cup\{0\}}$ satisfying \eqref{H_init} if, for every $i\in\mathbb{N}\cup\{0\}$ and all
\[
\xi \in C_c^\infty([0,T)\times\Omega),
\]
the following equality holds
\[
-\int_\Omega
\xi(0,x)\, f_{i,0}(x)\, dx
-
\int_0^T \int_\Omega
\partial_t \xi\, f_i
\,dx\,dt
+
d_i\int_0^T \int_\Omega
\nabla \xi \cdot \nabla f_i
\,dx\,dt
=
\int_0^T \int_\Omega
\xi \, Q_{i}(f)
\,dx\,dt,
\]
where \(Q_{i}\) is being defined in \eqref{Q0 operator}-\eqref{Qi operator}.
\end{defn}
Next, we state the main result of the paper.
\begin{thm}\label{main result}
Let $(f_{i,0})_{i\in\mathbb{N}\cup\{0\}}$ satisfy \eqref{H_init} and \eqref{def:entropy in}. Assume that the diffusion coefficients $(d_i)_{i\in\mathbb{N}\cup\{0\}}$ satisfy \eqref{H_diff} and that the kernel $(K_{i,j})_{i,j\in\mathbb{N}\cup\{0\}}$ satisfies \eqref{sectorwise bound of kernel}. Then the system \eqref{main equation} admits a global-in-time non-negative weak solution \((f_i)_{i\in\mathbb{N}\cup\{0\}}\) in the sense of Definition \ref{definition weak solution}.
\end{thm}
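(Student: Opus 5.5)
We prove Theorem \ref{main result} by truncation, uniform a priori estimates, and compactness, following the outline in the abstract.

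\textbf{Step 1: truncated system.} Fix $N\in\mathbb{N}$ and keep only the species $0\le i\le N$. Set $K^N_{i,j}=b_ia_j$ if $i\le N$, $j\le N-1$, and $i\ge1$ for the donor; set it to zero otherwise. Then no exchange leaves $\{0,\dots,N\}$, and the truncated system still conserves particle number and mass. Take initial data $f^N_{i,0}=f_{i,0}$, regularized if needed.

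The truncated system is a finite quadratic reaction--diffusion system. It is mass-controlled, and it has detailed balance with equilibria $\mathcal{Q}_i$: indeed $b_{i+1}\mathcal{Q}_{i+1}=a_i\mathcal{Q}_i$, so each reaction pair balances. Known results for such systems then give a global nonnegative renormalized (or weak) solution. Alternatively, one can first cut off the nonlinearity and pass to the limit.

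Detailed balance yields the entropy identity for
$$H(f)=\sum_i\int_\Omega f_i\bigl(\log(f_i/\mathcal{Q}_i)-1\bigr).$$
Its dissipation is
$$\sum_i 4d_i\int_\Omega|\nabla\sqrt{f_i}|^2 \;+\;\sum_{i,j}\int_\Omega b_{i+1}a_j\mathcal{Q}_{i+1}\mathcal{Q}_j\,\Psi\!\Bigl(\tfrac{f_{i+1}f_j}{\mathcal{Q}_{i+1}\mathcal{Q}_j},\tfrac{f_if_{j+1}}{\mathcal{Q}_i\mathcal{Q}_{j+1}}\Bigr),$$
where $\Psi(x,y)=(x-y)(\log x-\log y)\ge0$. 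By \eqref{def:entropy in}, and because $H$ is bounded below thanks to mass conservation and the decay of $\mathcal{Q}_i$, we get a bound on the Fisher information $\sum_i\int_0^T\!\int_\Omega|\nabla\sqrt{f^N_i}|^2$ that is uniform in $N$.

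\textbf{Step 2: uniform bounds.} The two conservation laws give $\sup_t\sum_i(i+1)\|f_i^N\|_{L^1}\le C$. Summing the equations with weights $(i+1)$ shows that $\rho=\sum_i(i+1)f_i^N$ satisfies $\partial_t\rho-\Delta(\mu\rho)=0$, where $d_*\le\mu\le d^*$. The duality lemma together with \eqref{H_init} then gives $\|\rho\|_{L^2(Q_T)}\le C$ uniformly in $N$. Combining this with the Fisher information bound, each $\nabla f_i=2\sqrt{f_i}\nabla\sqrt{f_i}$ is bounded in $L^1(0,T;L^1)$, indeed in $L^1(0,T;L^{1})$ via Cauchy--Schwarz with $\sqrt{f_i}\in L^\infty(L^2)$. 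So $f_i^N$ is bounded in $L^1(W^{1,1})\cap L^2(L^2)$.

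For the reaction terms, $|Q_i(f^N)|\le C(i+2)\,\rho\,(f_{i-1}+f_i+f_{i+1})\cdot\ldots$, which is bounded in $L^1(Q_T)$ because $\rho\in L^2$ and $\sum_j K_{i,j}f_j\le C(i+1)\rho$. The Aubin--Lions lemma, or the $L^1$ compactness of the heat semigroup, then gives strong $L^1$ convergence $f_i^N\to f_i$ for each fixed $i$, and a.e. convergence after a diagonal extraction. The limit is nonnegative and satisfies the same bounds.

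\textbf{Step 3: passage to the limit in $Q_i$ (main obstacle).} Each product $f_i^N\sum_jK_{i,j}f^N_j$ is a product of two sequences that are only $L^2$-bounded, so $L^1$ convergence is not automatic. The plan has two parts.

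\emph{Tails of the series.} The sublinearity $a_j\le a^*(j+1)^\alpha$, i.e. the vanishing limit in \eqref{H_kernel}, gives $\sum_{j\ge M}a_jf^N_j\le a^*(M+1)^{\alpha-1}\rho$. Hence the tail contributes at most $C(M+1)^{\alpha-1}\|f_i^N\|_{L^2}\|\rho\|_{L^2}\to0$ uniformly in $N$. The receiver sums $\sum_j b_jf_j$ grow linearly, but they appear multiplied by $a_i$ for fixed $i$, which is a constant. Their tails need uniform integrability of $\rho^N f_i^N$.

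\emph{Equi-integrability of the products.} I would obtain this by upgrading integrability slightly, for example $\rho\in L^{2+\varepsilon}$. Two routes are available: an improved duality estimate (available since $\mu$ is bounded above and below), or the Fisher bound giving $\sqrt{f_i}\in L^2(H^1)$ together with $f_i$ bounded in $L^\infty(L^1)$, so that $f_i\in L^{1+2/d}(Q_T)$ for fixed $i$. Then $f_i\cdot\rho$ with $\rho\in L^2$ and $f_i\in L^{2+\delta}$ is equi-integrable, and Vitali's theorem passes the products to the limit. This integrability upgrade for the product against the linearly weighted total density is the step I expect to require the most care. Once it holds, the weak formulation of Definition \ref{definition weak solution} follows for each $i$ and every $T$, which proves Theorem \ref{main result}.
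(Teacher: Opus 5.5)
Your Step 3 has a genuine gap at the donor-weighted sums $B^N=\sum_j b_jf_j^N$. These enter $Q_i$ through $-a_if_i^NB^N$ and $a_{i-1}f_{i-1}^NB^N$.

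\emph{The tails of $B^N$ are not small.} Since $b_j\ge b_*(j+1)$, no sublinearity helps: \eqref{H_kernel} controls growth only in the receiver index. The $L^2$ bound on $\rho^N$ gives, along a subsequence, $B^N\rightharpoonup\bar B$ with only $\bar B\ge\sum_j b_jf_j$. Strict inequality means first moment escaping to large cluster sizes, and no integrability gain in $(t,x)$ rules this out. Identifying the limit requires $B^N\to\sum_j b_jf_j$ in measure. Given the $L^2$ bound, this amounts to $\sup_N\|\sum_{j\ge M}(j+1)f_j^N\|_{L^1(Q_T)}\to0$ as $M\to\infty$. Your remark that these tails ``need uniform integrability of $\rho^Nf_i^N$'' targets the wrong property.

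\emph{The missing ingredient is a uniform superlinear moment bound, and your own entropy supplies it.} From the bound on $\mathcal{Q}_i$ you get $\log(1/\mathcal{Q}_i)\ge(1-\alpha)i\log i-Ci$. Also $f\log(f/\mathcal{Q}_i)\ge\frac12 f\log(1/\mathcal{Q}_i)-e^{-1}\sqrt{\mathcal{Q}_i}$, with $\sum_i\sqrt{\mathcal{Q}_i}<\infty$. Monotonicity of $H$, together with \eqref{def:entropy in} and mass conservation, then gives $\sup_{N,t}\int_\Omega\sum_i i\log(i+1)f_i^N\le C$. Hence the tails are $O(1/\log M)$. The paper's write-up passes over this point as well: its tail estimate for $\mathcal{L}_1^N$ uses $\sup_{j\ge N^*}K_{I+1,j}/(j+1)\to0$. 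That does not carry over to $\mathcal{L}_3^N$ and $\mathcal{L}_4^N$, where $K_{j,I}/(j+1)\ge b_*a_I$.

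\emph{Your integrability upgrade for the products is not available as stated.}
\begin{itemize}
\item Fisher information plus $L^\infty(L^1)$ gives $f_i\in L^{1+2/d}(Q_T)$. This improves on the trivial $f_i\le\rho\in L^2$ only when $d=1$, while the theorem is claimed in every dimension.
\item The improved duality estimate of Ca\~nizo--Desvillettes--Fellner bounds $\|\rho\|_{L^{2+\varepsilon}(Q_T)}$ by $\|\rho(0)\|_{L^{2+\varepsilon}(\Omega)}$. Assumption \eqref{H_init} does not provide that.
\end{itemize}

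\emph{The paper avoids the need for any integrability gain by renormalization.} It shows the truncated solutions are renormalized (Theorem~\ref{TS: renormalize solution to}) and tests with $\Phi^\Lambda(f_I+\eta(f_{I+1}+f_{I-1}))$. On the support of $(\Phi^\Lambda)'$, every factor $f_I^N, f_{I\pm1}^N$ multiplying an infinite sum is then bounded by $\Lambda^*/\eta$, so only $L^1$ convergence of the sums is needed. The terms produced by $d_{I\pm1}\neq d_I$ are $O(\eta^{1/2})$ by Lemma~\ref{lem:trunc-energy}. The defect measures $(\Phi^\Lambda)''|\nabla F|^2$ vanish as $\Lambda\to\infty$ thanks to the uniform Fisher bound. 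That, not $\nabla f_i\in L^1$, is the essential use of the Fisher information.

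Either adopt this device or prove a genuine $L^{2+\delta}$ bound on $\rho^N$ from $L^2$ data. In both cases you still need the tightness argument above.
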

It is worth pointing out that there are two main difficulties in this problem. First, the number of unknowns is infinite. Second, the source term is nonlinear, being quadratic in nature. We begin by truncating the system with respect to the number of unknowns. For each \(N \in \mathbb{N}\),
\begin{equation}\label{truncated system}
\begin{cases}
\partial_t f_i^N - d_i \Delta f_i^N = Q_i^N(f^N)
& \mbox{in}\ (0,T)\times\Omega,\; 0\le i\le N \\[0.4em]
\nabla f_i^N\cdot n =0,
& \mbox{on}\ (0,T)\times\partial\Omega,\; 0\le i\le N \\[0.4em]
f_i^N(0,x)=f_{i,0}^N(x)\ge0,
& \mbox{in}\ \Omega,\; 0\le i\le N,
\end{cases}
\end{equation}
where the truncated exchange operators are given by
\begin{equation}\label{Q0N operator}
Q_0^N(f^N)
:=
f_1^N \sum_{j=0}^{N-1} K_{1,j}\, f_j^N
-
f_0^N \sum_{j=1}^{N} K_{j,0}\, f_j^N ,
\end{equation}
\begin{align}\label{QiN operator}
Q_i^N(f^N)
:=\;&
f_{i+1}^N \sum_{j=0}^{N-1} K_{i+1,j}\, f_j^N
+
f_{i-1}^N \sum_{j=1}^{N} K_{j,i-1}\, f_j^N
\notag\\
&-
f_i^N \sum_{j=0}^{N-1} K_{i,j}\, f_j^N
-
f_i^N \sum_{j=1}^{N} K_{j,i}\, f_j^N ,
\end{align}
for \(1\le i \le N-1\), and
\begin{equation}\label{QNN operator}
Q_N^N(f^N)
:=
-
f_N^N \sum_{j=0}^{N-1} K_{N,j}\, f_j^N
+
f_{N-1}^N \sum_{j=1}^{N} K_{j,N-1}\, f_j^N.
\end{equation}

Note that here as well, the source is nonlinear, being quadratic. We will construct a solution to the main system \eqref{main equation} as the limit, as \(N \to \infty\), of the solutions to the truncated system \eqref{truncated system}. Moreover, one can verify that $\displaystyle{\sum_{i=0}^{N} Q_i^N = 0 \ \text{and}\ \sum_{i=0}^{N} i\,Q_i^N = 0}$,
following arguments analogous to those in Lemma~\ref{mass and momentum with auxiliary equ}. Consequently, the truncated system \eqref{truncated system} inherits the total number of particles and mass conservation properties encoded in the structure of the source term. Note that the truncated system is a reaction-diffusion system with a total number of particles and a mass-conservation structure.

Reaction-diffusion equations with quadratic nonlinearities have been studied extensively in \cite{pierre2003weak, Pierre2010}. For a comprehensive discussion of reaction-diffusion systems, see \cite{canizo2014improved, desvillettes2015duality, desvillettes2007global, fitzgibbon2021reaction, fellner2021uniform, das2025existence}. In \cite{desvillettes2007global}, significant progress in the treatment of quadratic source terms in reaction-diffusion equations has been established. Exploiting only the mass conservation property, the authors proved that the source term belongs to \(L^1((0,T);L^1(\Omega))\). Let us state the result below.

\begin{thm}[\cite{desvillettes2007global}]\label{finite L^2 estimate}
    For $1\leq i\leq m$, let $\mathcal{F}_i\geq 0$ satisfies: for $1\leq i\leq m$
    \begin{equation*}
        \left \{
         \begin{aligned}
             \partial_t \left( \sum_{i=1}^m \mathcal{F}_i\right)- \Delta \left( \sum_{i=1}^m d_i\mathcal{F}_i\right)=&0 \qquad && \mbox{in}\ (0,T)\times\Omega\\
             \nabla \mathcal{F}_i\cdot n=& 0 \qquad && \mbox{on} \ (0,T)\times\partial\Omega\\
             \mathcal{F}_i(0,x)=& \mathcal{F}_{i}^{\rm{in}} \qquad && \mbox{in}\ \Omega.
         \end{aligned}
        \right.
    \end{equation*}
    Let $\mathcal{F}_i^{\rm in} \in L^2((0,T)\times\Omega)$ for $i = 1, \dots, m$.
Then
\begin{align*}
\int_{0}^T \int_{\Omega} \sum\limits_{i=1}^m \mathcal{F}_i \times \sum\limits_{i=1}^m d_i\mathcal{F}_i \, dx \, dt \leq C(\Omega,T) \max\limits_{1\leq i\leq m}\{d_i\} \times \sum_{i=1}^m \| \mathcal{F}_i^{\rm{in}}\|_{L^2(\Omega)},
\end{align*}
where the constant $C(\Omega,T)$ is positive and only depends on the domain and $T$.
\end{thm}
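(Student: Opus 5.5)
The plan is the classical duality argument of Pierre and Schmitt in the form used in \cite{desvillettes2007global}. Set $U:=\sum_{i=1}^m\mathcal{F}_i\ge0$ and $V:=\sum_{i=1}^m d_i\mathcal{F}_i$. Define $A:=V/U$ where $U>0$, and $A:=\max_i d_i$ elsewhere. Then $\min_i d_i\le A\le d^\sharp:=\max_i d_i$, and the hypothesis reads $\partial_tU-\Delta(AU)=0$ with homogeneous Neumann condition. The quantity to estimate is
\[
\int_0^T\!\!\int_\Omega U\,V=\int_0^T\!\!\int_\Omega AU^2 .
\]

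The key steps are as follows.
\begin{enumerate}
\item Integrate the equation in time. Setting $W(t,x):=\int_0^t (AU)(s,x)\,ds$, this gives $U(t)=U^{\rm in}+\Delta W(t)$, with $\nabla W\cdot n=0$ on $\partial\Omega$ and $\partial_tW=AU$.
\item Multiply this identity by $\partial_tW=AU$ and integrate over $(0,T)\times\Omega$. Integration by parts in space gives $\int_\Omega\partial_tW\,\Delta W=-\tfrac12\frac{d}{dt}\int_\Omega|\nabla W|^2$, and $W(0)=0$. Hence
\[
\int_0^T\!\!\int_\Omega AU^2+\frac12\int_\Omega|\nabla W(T)|^2=\int_0^T\!\!\int_\Omega U^{\rm in}\,AU .
\]
\item Bound the right-hand side by Cauchy--Schwarz with weight $A$:
\[
\Bigl(\int_0^T\!\!\int_\Omega AU^2\Bigr)^{1/2}\Bigl(T\,d^\sharp\|U^{\rm in}\|_{L^2(\Omega)}^2\Bigr)^{1/2}.
\]
Dropping the gradient term and dividing yields
\[
\int_0^T\!\!\int_\Omega U\,V\le T\,d^\sharp\,\|U^{\rm in}\|_{L^2(\Omega)}^2\le T\,\max_i d_i\Bigl(\sum_{i=1}^m\|\mathcal{F}_i^{\rm in}\|_{L^2(\Omega)}\Bigr)^2 .
\end{enumerate}
This is the stated estimate with $C(\Omega,T)=T$, with the initial-data factor entering quadratically.

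The main point needing care is the homogeneity of the right-hand side. The left-hand side is quadratic in $(\mathcal{F}_i)$, and the equation is invariant under $\mathcal{F}_i\mapsto\lambda\mathcal{F}_i$. So a bound linear in $\sum_i\|\mathcal{F}_i^{\rm in}\|_{L^2}$ with $C$ depending only on $\Omega,T$ cannot hold for all data, and the statement must be read with the square on the initial norms. Equivalently, the printed form holds as written if $C$ is allowed to depend on $\sum_i\|\mathcal{F}_i^{\rm in}\|_{L^2}$. The argument does not depend on which reading is taken, and for the later application only an $N$-independent bound in terms of \eqref{H_init} matters.

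The remaining technical issue is justifying steps 1 and 2 when the $\mathcal{F}_i$ are merely weak solutions. I would do this by first proving the identity for smooth, strictly positive approximations (or the classical solutions of the truncated system), where all manipulations are legitimate, and then passing to the limit by lower semicontinuity of $\int\!\!\int AU^2$.
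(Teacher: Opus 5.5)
Your argument is correct. It is the standard duality computation behind the result of \cite{desvillettes2007global}: integrate in time, test with $\partial_t W=AU$, then apply a weighted Cauchy--Schwarz inequality. The paper itself quotes that result without proof, and your computation yields $\int_0^T\!\int_\Omega UV\le T\max_i d_i\,\|U^{\rm in}\|_{L^2(\Omega)}^2$.

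Your homogeneity objection is also right. Spatially constant solutions $\mathcal{F}_i\equiv\lambda c_i$, with $c_i\ge 0$ not all zero, violate the printed bound (which is linear in $\sum_i\|\mathcal{F}_i^{\rm in}\|_{L^2(\Omega)}$) once $\lambda$ is large. So the square is needed, and $\mathcal{F}_i^{\rm in}$ should lie in $L^2(\Omega)$. This matches the squared norms the paper itself uses in Proposition~\ref{L2 estimate: truncated regularized system}.
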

In the context of the discrete coagulation--fragmentation model, where the number of unknowns is infinite, an analogous estimate was established in \cite{canizo2010absence, das2026existence}. Combining this estimate with the approach developed in \cite{Pierre2010}, one can construct an \(L^1((0,T);W^{1,1}(\Omega))\) solution to the truncated system \eqref{truncated system} by passing to the limit in a suitable regularized version of the system (see \cite{Pierre2010} for further details).

Another difficulty arises when the truncation parameter \(N\) tends to infinity, that is, when passing to the limit \(N \to \infty\) in \eqref{truncated system}. The main challenge lies in handling two simultaneous limiting processes in the source term. The first concerns compactness with respect to the truncation index \(N\), while the second stems from the presence of the partial sum of an infinite sum in the source term. 
\newline
To overcome this difficulty, we truncate the range of the solution to the truncated system \eqref{truncated system} by means of a truncation-to-identity type map, as described in \eqref{truncation to identity}. Our goal is to analyze the behavior of the composition of such a truncation-to-identity function with the solution of the truncated system \eqref{truncated system}, by deriving the equation satisfied by the resulting composite function. The main obstacle here is the lack of sufficient regularity of the solutions to the truncated system \eqref{truncated system}. To overcome this issue, we revisit the techniques of renormalized solutions developed in \cite{JF2015}. In this article, we construct an \(L^1((0,T);W^{1,1}(\Omega))\) global-in-time weak nonnegative solution to the truncated system \eqref{truncated system} by employing the methods introduced in \cite{Pierre2010}. Furthermore, we show that this particular weak solution is also a renormalized solution of the same truncated system. This is achieved by exploiting the uniform bound on the Fisher information
\[
\sum_{i=0}^N \int_0^t \int_{\Omega} \frac{|\nabla f^N_i|^2}{f^N_i}\leq M_{\mathcal{F}},
\]
as established in Theorem \ref{existence of solution: truncated system}. This approach is motivated by the work in \cite{JF2015}. Once this step is achieved (as described in Proposition \ref{TS: composition with a special truncation to identity}), we can overcome the compactness issue in the nonlinear source term and ultimately obtain a global-in-time weak nonnegative solution to the main system \eqref{main equation} belonging to \(L^1((0,T);W^{1,1}(\Omega))\).
\newline
To construct such a global-in-time weak nonnegative solution to \eqref{truncated system}, which also serves as a renormalized solution, we use a damping in the the source term as described in \eqref{regularized truncated system}. We then apply the following theorem to guarantee the existence of a global-in-time strong solution to the damped system \eqref{regularized truncated system}. Proof of the theorem can be found in \cite{amann1995linear, quittner2007, rothe2006}.
\begin{thm}\label{existence theory for regular source}
    For $1\leq i\leq m$, let $d_i> 0$. Let $\mathcal{F}_i\geq 0$ satisfies: for $1\leq i\leq m$
    \begin{equation}\label{existence 1}
        \left \{
         \begin{aligned}
             \partial_t  \mathcal{F}_i- d_i \Delta \mathcal{F}_i=&\mathcal{A}_i(\mathcal{F}) \qquad && \mbox{in}\ (0,T)\times\Omega\\
             \nabla \mathcal{F}_i\cdot n=& 0 \qquad && \mbox{on} \ (0,T)\times\partial\Omega\\
             \mathcal{F}_i(0,x)=& \mathcal{F}_{i}^{\rm{in}} \qquad && \mbox{in}\ \Omega.
         \end{aligned}
        \right .
    \end{equation}
    Let \(\mathcal{F}_i^{\rm in} \in L^2((0,T)\times\Omega)\) for \(i=1,\dots,m\), and let \(\mathcal{A}_i\) be a globally Lipschitz continuous function for each \(i=1,\dots,m\). Then the system \eqref{existence 1} admits a unique global-in-time classical solution, which is smooth in \((0,T)\times\Omega\).
\end{thm}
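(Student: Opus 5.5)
This is a classical result for semilinear parabolic systems with globally Lipschitz nonlinearities. I would establish it by a Banach fixed-point argument in the mild (Duhamel) formulation and then upgrade regularity by parabolic bootstrapping.

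\textbf{Setup.} Let \(S_i(t)=e^{t d_i\Delta_N}\) be the Neumann heat semigroup on \(L^2(\Omega)\). It is a contraction semigroup, is analytic, and preserves nonnegativity. A mild solution of \eqref{existence 1} is a fixed point of
\[
\Phi(\mathcal{F})_i(t)=S_i(t)\mathcal{F}_i^{\rm in}+\int_0^t S_i(t-s)\,\mathcal{A}_i(\mathcal{F}(s))\,ds
\]
in \(X_T=C([0,T];L^2(\Omega))^m\). (The hypothesis on the initial datum should be read as \(\mathcal{F}_i^{\rm in}\in L^2(\Omega)\).)

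\textbf{Existence and uniqueness of a mild solution.} Let \(L\) be the Lipschitz constant of \(\mathcal{A}\). Then
\[
\|\Phi(\mathcal{F})(t)-\Phi(\mathcal{G})(t)\|_{L^2}\le L\int_0^t\|\mathcal{F}(s)-\mathcal{G}(s)\|_{L^2}\,ds .
\]
Hence \(\Phi\) is a contraction on \(X_T\) equipped with the weighted norm \(\sup_t e^{-2Lt}\|\cdot\|_{L^2}\), for any \(T\). This yields a unique global mild solution without any blow-up alternative. The constant term \(\mathcal{A}(0)\) poses no problem because \(\Omega\) is bounded, and global Lipschitz continuity gives linear growth \(|\mathcal{A}(\mathcal{F})|\le C(1+|\mathcal{F}|)\).

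\textbf{Regularity.} The plan is to bootstrap. First, since \(\mathcal{A}(\mathcal{F})\in L^2((0,T)\times\Omega)\), maximal \(L^2\)-regularity for the Neumann heat equation gives \(\mathcal{F}\in W^{1,2}_2\) on \((\tau,T)\times\Omega\). Second, linear growth together with \(L^p\)–\(L^q\) smoothing of \(S_i\) raises integrability step by step, giving \(\mathcal{F}\in L^\infty((\tau,T)\times\Omega)\). Third, maximal \(L^p\)-regularity gives \(\mathcal{F}\in W^{1,2}_p\) for all \(p<\infty\), and hence Hölder continuity. Since \(\mathcal{A}\) is Lipschitz, \(\mathcal{A}(\mathcal{F})\) is then Hölder in \((t,x)\). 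Schauder estimates then give \(C^{1+\theta/2,2+\theta}\), so \(\mathcal{F}\) is a classical solution.

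\textbf{Main obstacle.} I expect the hard step to be getting smoothness beyond \(C^{1,2}\). A merely Lipschitz \(\mathcal{A}\) caps the bootstrap at this level, so full \(C^\infty\) smoothness requires \(\mathcal{A}\) to be smooth (or at least \(C^k\)). In the application, the damped quadratic terms of \eqref{regularized truncated system} are smooth, and iterating Schauder estimates then gives \(C^\infty\) in \((0,T)\times\Omega\). For that point I would defer to \cite{amann1995linear, quittner2007, rothe2006}.

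\textbf{Nonnegativity.} I would obtain this as in \cite{Pierre2010}, using quasi-positivity of the damped source, i.e. \(\mathcal{A}_i(\mathcal{F})\ge0\) whenever \(\mathcal{F}_i=0\) and \(\mathcal{F}\ge0\). Test the equation for \(\mathcal{F}_i\) with \(\mathcal{F}_i^-=\max(-\mathcal{F}_i,0)\) and apply Grönwall's inequality; this shows \(\mathcal{F}_i^-\equiv0\).
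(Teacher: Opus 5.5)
Your argument is correct. It is the standard route: a fixed point in the mild formulation, followed by a parabolic bootstrap through maximal regularity, $L^p$--$L^q$ smoothing and Schauder estimates. The paper gives no proof of its own and defers to exactly this argument in \cite{amann1995linear, quittner2007, rothe2006}.

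Your caveats about the statement are also justified. The initial data should lie in $L^2(\Omega)$, not $L^2((0,T)\times\Omega)$. A merely Lipschitz $\mathcal{A}_i$ only yields a classical $C^{1+\theta/2,2+\theta}$ solution for $t>0$; full smoothness needs the damped source $Q^N_{i,\epsilon}$ in \eqref{regularized truncated system} to be smooth, which it is. Nonnegativity does not follow from this theorem but from quasi-positivity, which the paper treats separately in Theorem~\ref{nonnegativity of solution}.
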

Furthermore, we show that the solutions to the damped truncated system \eqref{regularized truncated system} are nonnegative. This follows from the nonnegativity of the initial data together with the following theorem.
\begin{thm}[Nonnegativity of the solutions, \cite{Pierre2010}]\label{nonnegativity of solution}
Let $\mu_i>0$ for $1\le i\le m$. Let $v_i:(0,T)\times\Omega\to\mathbb{R}$ be a strong solution of the system
\[
\left\{
\begin{aligned}
\partial_t v_i - \mu_i \Delta v_i &= \mathcal{Y}_i(v_1,\dots,v_m) && \text{in } (0,T)\times\Omega\\
\nabla v_i\cdot n &= 0 && \text{on } (0,T)\times\partial\Omega\\
v_i(0,x) &\ge 0 && \text{in } \Omega,
\end{aligned}
\right.
\]
for $1\le i\le m$. Assume that $\mathcal{Y}=(\mathcal{Y}_1,\dots,\mathcal{A}_m):\mathbb{R}^m\to\mathbb{R}^m$ is quasipositive, that is,
\begin{align}\label{quasipositive}
\mathcal{Y}_i(w_1,\dots,w_{i-1},0,w_{i+1},\dots,w_m) \ge 0,
\quad \forall (w_1,\dots,w_m)\in [0,+\infty)^m.
\end{align}
Then, $v_i\ge 0$ for all $i=1,\dots,m$.
\end{thm}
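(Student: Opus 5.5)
The statement is the quasipositivity criterion for nonnegativity of strong solutions. My plan is a Stampacchia-type truncation argument applied to the negative parts $v_i^- := \max\{-v_i,0\}$. The idea is to show that $\sum_i \|v_i^-(t)\|_{L^2(\Omega)}^2$ satisfies a Gronwall inequality with zero initial value. Throughout, $\mathcal{Y}$ is taken to be locally Lipschitz, which is the standing situation in the paper since the sources are polynomial, or damped and globally Lipschitz. Since $v$ is a strong solution on $(0,T)\times\Omega$, it is bounded on $[0,T']\times\overline\Omega$ for each $T'<T$, say by $R$. On the ball of radius $\sqrt m R$, $\mathcal{Y}$ is then Lipschitz with some constant $L_R$.

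\emph{Step 1 (test with the negative part).} Multiply the $i$-th equation by $-v_i^-$ and integrate over $\Omega$. The homogeneous Neumann condition kills the boundary term, and $\nabla v_i\cdot\nabla(-v_i^-)=|\nabla v_i^-|^2$ a.e. This gives
\[
\frac12\frac{d}{dt}\int_\Omega |v_i^-|^2\,dx+\mu_i\int_\Omega|\nabla v_i^-|^2\,dx=-\int_\Omega v_i^-\,\mathcal{Y}_i(v)\,dx .
\]
The chain rule for $t\mapsto\|v_i^-(t)\|_{L^2}^2$ is justified because $v_i\in W^{1,2}$ in time with values in $L^2$, and $s\mapsto (s^-)^2$ is $C^1$ with Lipschitz derivative.

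\emph{Step 2 (use quasipositivity).} The right-hand side only matters on the set $\{v_i<0\}$. Write $v^+=(v_1^+,\dots,v_m^+)\in[0,\infty)^m$. By \eqref{quasipositive}, $\mathcal{Y}_i(v_1^+,\dots,v_{i-1}^+,0,v_{i+1}^+,\dots,v_m^+)\ge 0$. On $\{v_i<0\}$ the $i$-th entry of $v^+$ is already $0$, so this point coincides with $v^+$ there. Hence, on that set,
\[
-\mathcal{Y}_i(v)\le \mathcal{Y}_i(v^+)-\mathcal{Y}_i(v)\le L_R|v^+-v|=L_R\Big(\sum_{j}|v_j^-|^2\Big)^{1/2},
\]
where the last equality uses $v^+-v=v^-$. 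Combining with Step 1 and using Young's inequality gives, for $\Phi(t):=\sum_i\|v_i^-(t)\|_{L^2}^2$,
\[
\Phi'(t)\le 2L_R\int_\Omega \sum_i v_i^-\,|v^-|\,dx\le 2\sqrt m\,L_R\,\Phi(t).
\]

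\emph{Step 3 (conclude).} Since $v_i(0)\ge0$, we have $\Phi(0)=0$, and Gronwall gives $\Phi\equiv0$ on $[0,T']$. As $T'<T$ is arbitrary, $v_i\ge0$ for all $i$.

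The main obstacle is Step 2. One must replace $v$ by a point of the nonnegative orthant whose $i$-th coordinate vanishes, so that \eqref{quasipositive} applies, while controlling the error by $v^-$ alone. This requires the local Lipschitz bound on a set containing both $v$ and $v^+$. That is where the a priori $L^\infty$ bound on the strong solution, or global Lipschitz continuity in the damped case of Theorem \ref{existence theory for regular source}, is needed. A secondary technical point is justifying the chain rule for $(v_i^-)^2$, which is standard for strong solutions.
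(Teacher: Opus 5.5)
Your argument is correct. The paper gives no proof of its own here (it only cites Pierre's survey for this classical quasipositivity criterion), so the relevant comparison is with the textbook argument.

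That argument usually passes through the auxiliary system $\partial_t w_i-\mu_i\Delta w_i=\mathcal{Y}_i(w^+)$. Test with $-w_i^-$ and note that on $\{w_i<0\}$ the point $w^+$ lies in $[0,\infty)^m$ with vanishing $i$-th entry. Quasipositivity then gives $-\int_\Omega w_i^-\,\mathcal{Y}_i(w^+)\,dx\le 0$ outright, so $\|w_i^-(t)\|_{L^2}$ is nonincreasing with no Gronwall step. Hence $w\ge 0$ and $w^+=w$, so $w$ solves the original system, and uniqueness gives $w=v$. In that route the Lipschitz hypothesis enters only through local solvability and uniqueness. In yours it enters through the one-sided comparison $-\mathcal{Y}_i(v)\le L_R|v^-|$ on $\{v_i<0\}$, which you then close with Gronwall. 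Your version works directly on the given solution, needs no auxiliary problem and no uniqueness theorem, and even yields the stability estimate $\Phi(t)\le e^{2\sqrt{m}\,L_R t}\,\Phi(0)$.

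Two remarks on hypotheses.

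\emph{Local Lipschitz continuity.} The assumption you add is not merely the ``standing situation''; it is necessary, and the statement as printed omits it. Without it the conclusion fails: for $m=1$ the map $\mathcal{Y}(w)=-\sqrt{|w|}$ is quasipositive, yet $v(t,x)=-t^2/4$ is a smooth solution with homogeneous Neumann condition and $v(0)=0$.

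\emph{Boundedness up to $t=0$.} Your claim that a strong solution is bounded on $[0,T']\times\overline\Omega$ presupposes $v(0)\in L^\infty(\Omega)$, which is the usual framework for this criterion. If $v(t)\to v(0)$ in $L^2$ with $v(0)$ unbounded, no such bound can hold near $t=0$. The data in Theorem~\ref{existence theory for regular source} are only $L^2$. So either state boundedness as an explicit hypothesis, or use global Lipschitz continuity. The latter is what the paper actually needs for the damped system \eqref{regularized truncated system}, and your argument already covers it.
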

In this article, the Fisher information plays a crucial role in deriving several  estimates. In particular, through an entropy analysis, we establish a uniform bound on the Fisher information associated with the damped truncated system \eqref{regularized truncated system}, where the bound is uniform with respect to both the damping parameter and the truncation parameter `\(N\)'. In the study of reaction-diffusion systems, entropy methods play a fundamental role in analyzing the qualitative behavior of solutions. For further details, we refer the reader to \cite{desvillettes2007global, desvillettes2015duality, JF2015}. In the context of the damped truncated system \eqref{regularized truncated system}, we define the entropy functional by
\[
\sum_{i=0}^N \int_{\Omega} f_{i,\varepsilon}^N
\log \left(\frac{f_{i,\varepsilon}^N}{\mathcal{Q}_i}\right).
\]
Here, \(f_{i,\varepsilon}^N\), for \(0 \leq i \leq N\), denotes the solution to the damped truncated system \eqref{regularized truncated system} (here $\epsilon$ being the regularizing parameter), while the constants \(\mathcal{Q}_i\) are suitably chosen weights, whose definition and properties are described in Section \ref{sec:RTS}. In Section \ref{sec:RTS}, we show that the above entropy functional is non-increasing in time. Moreover, the Fisher information $\displaystyle{\sum_{i=0}^N \int_0^t \int_{\Omega}
\frac{|\nabla f_{i,\varepsilon}^N|^2}{f_{i,\varepsilon}^N}}$ naturally appears in the entropy dissipation identity as the contribution of the diffusion process to the entropy decay rate. Through the entropy-entropy dissipation identity, we obtain a uniform bound on the Fisher information that is independent of both the damping parameter \(\epsilon\) and the truncation parameter \(N\) (see \eqref{uniform bound on fisher info RTS} in Section~\ref{sec:RTS}). We obtain a uniform bound on the Fisher information associated with the truncated system \eqref{truncated system} by passing to the limit with respect to the damping parameter \(\epsilon\) and exploiting the corresponding compactness properties (see Theorem \ref{existence of solution: truncated system}). This quantity will play an important role throughout the article.

\subsection{Notaion}

\begin{itemize}
    \item [$\bullet$] $C_c^{\infty}([0,T)\times\Omega)$ denotes the space of all compactly supported smooth function in $[0,T)\times\Omega$.
    \item [$\bullet$] We denote $f^N:=(f_i^N)_{0\leq i\leq N}$ and $f_{\epsilon}^N:=(f_{i,\epsilon}^N)_{0\leq i\leq N}$, for all $N\in\mathbb{N}$.
    \item [$\bullet$] $RM([0,T)\times\Omega)$ denotes the space of all Radon measure defined on $[0,T)\times\Omega$ with absolute variance as the norm.
\end{itemize}

\section{Truncated system}\label{sec:RTS}

The exchange operators $Q_i^N$ are quadratic in $f^N$, so the right-hand side of \eqref{truncated system} is only locally Lipschitz and a priori bounds are needed before existence can be asserted. To obtain a well-posed approximating problem, we first regularize \eqref{truncated system} by damping the nonlinearity, so that for all $0\le i\le N$ and $\epsilon>0$ we consider
\begin{equation}\label{regularized truncated system}
\left \{
\begin{aligned}
    \partial_t f_{i,\epsilon}^N- d_i \Delta f_{i,\epsilon}^N= & \frac{Q_i^N(f_{\epsilon}^N)}{1+\epsilon\sum_{k=0}^N \left(f_{k,\epsilon}^N\right)^2}=:Q_{i,\epsilon}^N(f_{\epsilon}^N)\qquad && \mbox{in} \ (0,T)\times\Omega\\
    \nabla f_{i,\epsilon}^N \cdot n=& 0 \qquad && \mbox{on}\ (0,T)\times\partial\Omega\\
    f_{i,\epsilon}^N(0,x)=& f_{i,0}^N(x)\ge 0 \qquad && \mbox{in}\ \Omega.
    \end{aligned}
    \right .
\end{equation}
The damping factor renders the right-hand side globally Lipschitz. Indeed, both $Q_i^N(f_\epsilon^N)$ and its derivatives are controlled by
$\displaystyle{|Q_i^N(f_{\epsilon}^N)|\lesssim 1+\sum_{k=0}^N |f_{k,\epsilon}^N|^2}$, so the quotient $Q_{i,\epsilon}^N(f_\epsilon^N)$ is bounded with bounded gradient. By standard parabolic theory, the regularized system \eqref{regularized truncated system} therefore admits a unique smooth solution on $(0,T)\times\Omega$ (see Theorem \ref{existence theory for regular source}).

We next show that this solution is nonnegative, which we deduce from the quasipositive \eqref{quasipositive} structure of the source terms. When the $i$-th component vanishes, every loss term in $Q_{i,\epsilon}^N$ carries the factor $f_{i,\epsilon}^N$ and hence drops out, leaving only nonnegative gain terms, so $Q_{i,\epsilon}^N$ satisfies the quasipositivity condition \eqref{quasipositive}. Applying Theorem \ref{nonnegativity of solution} with $m=N+1$ (after relabeling the indices $0\le i\le N$ as $1\le i\le m$), and using $f_{i,0}^N\ge 0$, we conclude that $f_{i,\epsilon}^N\ge 0$ for all $0\le i\le N$.

Next, we derive some estimates for the solutions of the damped truncated system \eqref{regularized truncated system}. We begin with the following $L^2$ estimate.
\begin{prop}
\label{L2 estimate: truncated regularized system}
Assume that \(f_{i,0}\) and \(d_i\) satisfy \eqref{H_init} and \eqref{H_diff} respectively. For every $T>0$, there exists a constant $C(T)=C(T,\Omega,d_*,d^*)>0$, independent of $N$ and $\epsilon$, such that the solutions of the regularized system \eqref{regularized truncated system} satisfy
\begin{subequations}
\label{L2 estimate: regularized truncated system}
\begin{align}
\int_0^T \int_\Omega
\left( \sum_{i=0}^{N} i\, f_{i,\epsilon}^N(t,x) \right)^2
\,dx\,dt
&\;\le\;
C(T)\,
\frac{d^*}{d_*}
\left\|
\sum_{i=0}^{\infty} i\, f_{i,0}
\right\|_{L^2(\Omega)}^{2},
\label{L2 estimate: regularized truncated system-a}
\\
\int_0^T \int_\Omega
\left( \sum_{i=0}^{N} f_{i,\epsilon}^N(t,x) \right)^2
\,dx\,dt
&\;\le\;
C(T)\,
\frac{d^*}{d_*}
\left\|
\sum_{i=0}^{\infty} f_{i,0}
\right\|_{L^2(\Omega)}^{2},
\label{L2 estimate: regularized truncated system-b}
\\
\int_0^T \int_\Omega
\left( \sum_{i=0}^{N} (i+1)\, f_{i,\epsilon}^N(t,x) \right)^2
\,dx\,dt
&\;\le\;
C(T)\,
\frac{d^*}{d_*}
\left\|
\sum_{i=0}^{\infty} (i+1)\, f_{i,0}
\right\|_{L^2(\Omega)}^{2}.
\label{L2 estimate: regularized truncated system-c}
\end{align}
\end{subequations}
\end{prop}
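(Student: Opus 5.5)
The approach is the duality argument of Theorem \ref{finite L^2 estimate} applied to weighted sums, using the conservation structure of the damped sources. First I will check that $\sum_{i=0}^N Q_{i,\epsilon}^N(f_\epsilon^N)=0$ and $\sum_{i=0}^N i\,Q_{i,\epsilon}^N(f_\epsilon^N)=0$ pointwise. Since the damping factor $1+\epsilon\sum_k (f_{k,\epsilon}^N)^2$ is common to all components, it suffices to verify $\sum_i Q_i^N=0$ and $\sum_i iQ_i^N=0$. This is an index-shift computation: each exchange event $(i,j)\mapsto(i-1,j+1)$ with rate $K_{i,j}f_if_j$ appears once as a loss for $i$ and $j$ and once as a gain for $i-1$ and $j+1$. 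The truncation ranges in \eqref{Q0N operator}--\eqref{QNN operator} are chosen so that no receiver is size $N$ and no donor is size $0$, which keeps the system closed. This gives the number identity, and the mass identity follows because $(i-1)+(j+1)=i+j$.

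With these identities, for the weights $w_i\in\{i,\,1,\,i+1\}$ I set $\rho=\sum_{i=0}^N w_i f_{i,\epsilon}^N$ and $\sigma=\sum_{i=0}^N w_i d_i f_{i,\epsilon}^N$. Summing the equations gives $\partial_t\rho-\Delta\sigma=0$ with homogeneous Neumann conditions, and the solutions are smooth, so this is classical. Writing $\sigma=\mu\rho$ with $\mu=\sigma/\rho\in[d_*,d^*]$, Theorem \ref{finite L^2 estimate} (with $\mathcal{F}_i=w_if_{i,\epsilon}^N$) yields
\begin{align*}
d_*\int_0^T\!\!\int_\Omega \rho^2\le \int_0^T\!\!\int_\Omega \rho\,\sigma\le C(\Omega,T)\,d^*\,\Bigl\|\sum_{i=0}^N w_i f_{i,0}^N\Bigr\|_{L^2(\Omega)}^2 .
\end{align*}
The right side is written in the squared form that the standard duality proof actually produces, and the left inequality uses $\sigma\ge d_*\rho$ together with $\rho\ge0$. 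Dividing by $d_*$ gives \eqref{L2 estimate: regularized truncated system-a}--\eqref{L2 estimate: regularized truncated system-c} with truncated initial data. Since $f_{i,0}^N=f_{i,0}$ for $i\le N$, or at least $0\le f_{i,0}^N\le f_{i,0}$ by construction, and the weights are nonnegative, we have $\sum_{i\le N} w_if_{i,0}^N\le\sum_{i\ge0}w_if_{i,0}$. The right-hand side is finite by \eqref{H_init}, and the constants do not depend on $N$ or $\epsilon$.

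The main obstacle is making sure the duality constant depends only on $d_*,d^*,\Omega,T$ and not on the number $N+1$ of species. The statement of Theorem \ref{finite L^2 estimate} hides this, so I would reprove it directly for $\partial_t\rho-\Delta(\mu\rho)=0$. Testing against $\int_t^T\sigma$, or equivalently using the dual problem $-\partial_t\varphi-\mu\Delta\varphi=\rho$, gives
\begin{align*}
\tfrac{d}{dt}\int_\Omega\Bigl|\nabla\!\int_0^t\sigma\Bigr|^2 = -2\int_\Omega \rho\sigma+\dots
\end{align*}
This standard computation gives
\[\int_0^T\!\!\int_\Omega\rho\,\sigma\le C\,\|\rho(0)\|_{L^2}\,\|\textstyle\int_0^T\sigma\|\]
and, combined with $\sigma\le d^*\rho$, a closed $L^2$ bound in which only $\mu$ enters, through $d_*\le\mu\le d^*$. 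Positivity of $f_{i,\epsilon}^N$, which is already established, is essential for the pointwise comparisons $d_*\rho\le\sigma\le d^*\rho$.
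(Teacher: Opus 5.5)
Your proposal is correct and follows the route the paper intends. The paper omits this proof and defers to the duality $L^2$ estimates of \cite{desvillettes2007global, canizo2010absence}, which is exactly your argument: sum the damped equations with weights $1$, $i$, $i+1$ (the common damping factor preserves $\sum_i Q_{i,\epsilon}^N=\sum_i i\,Q_{i,\epsilon}^N=0$), then apply duality to $\partial_t\rho-\Delta\sigma=0$ with $d_*\rho\le\sigma\le d^*\rho$. Redoing the duality computation with $\|\sum_i w_i f_{i,0}\|_{L^2}^2$ on the right, rather than relying on the loosely stated quoted theorem, is what secures $N$-independence, and closing it via $\|\int_0^T\sigma\,dt\|_{L^2(\Omega)}^2\le T d^*\int_0^T\!\int_\Omega\rho\sigma$ even gives the explicit constant $C(T)=T$.
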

The proof follows along the same lines as the $L^2$ estimates established in~\cite{desvillettes2007global, canizo2010absence}. Since the arguments are essentially the same, we omit the details.

We further note the following relation holds for the system \eqref{regularized truncated system}. The proof can be found in \cite[Lemma 1]{EV2021}.
\begin{lem}[\cite{EV2021}]\label{mass and momentum with auxiliary equ}
Let $(g_i)_{i\ge 0}$ be a sequence of non-negative real numbers. Then the solution of \eqref{regularized truncated system} satisfies
\begin{align}\label{gi technique}
\sum_{i=0}^{N} g_i \frac{d}{dt} \int_\Omega f_{i,\epsilon}^N \,dx
= \;&
\sum_{i=1}^{N} (g_{i-1} - g_i)
\int_\Omega \frac{f_{i,\epsilon}^N \left( \sum_{j=0}^{N-1} K_{i,j}\, f_{j,\epsilon}^N \right)}{1+\epsilon\sum_{k=0}^N (f_{k,\epsilon}^N)^2}\, dx \notag \\
&\quad + \sum_{i=0}^{N-1} (g_{i+1} - g_i)
\int_\Omega \frac{f_{i,\epsilon}^N \left( \sum_{j=1}^{N} K_{j,i}\, f_{j,\epsilon}^N \right)}{1+\epsilon\sum_{k=0}^N (f_{k,\epsilon}^N)^2}\, dx.
\end{align}
\end{lem}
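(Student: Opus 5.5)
The identity \eqref{gi technique} is a discrete summation-by-parts. I would multiply the $i$-th equation of \eqref{regularized truncated system} by $g_i$, integrate over $\Omega$ and sum over $0\le i\le N$. The diffusion terms drop out, because $\int_\Omega \Delta f_{i,\epsilon}^N\,dx=0$ by the homogeneous Neumann condition, and the solution is classical by Theorem \ref{existence theory for regular source}. Writing $D:=1+\epsilon\sum_{k=0}^N (f_{k,\epsilon}^N)^2$, this leaves
\[
\sum_{i=0}^N g_i\frac{d}{dt}\int_\Omega f_{i,\epsilon}^N\,dx=\int_\Omega \frac{1}{D}\sum_{i=0}^N g_i\,Q_i^N(f_\epsilon^N)\,dx .
\]
Since $D$ does not depend on $i$, it suffices to prove the pointwise algebraic identity
\[
\sum_{i=0}^N g_i Q_i^N(f)=\sum_{i=1}^N (g_{i-1}-g_i)\, f_i\sum_{j=0}^{N-1}K_{i,j}f_j+\sum_{i=0}^{N-1}(g_{i+1}-g_i)\, f_i\sum_{j=1}^N K_{j,i}f_j
\]
for any nonnegative vector $f=(f_0,\dots,f_N)$.

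To prove it, I would introduce the donor flux $A_i:=f_i\sum_{j=0}^{N-1}K_{i,j}f_j$ for $1\le i\le N$ and the receiver flux $B_i:=f_i\sum_{j=1}^N K_{j,i}f_j$ for $0\le i\le N-1$, setting $A_0=0$ and $B_N=0$. A direct check of \eqref{Q0N operator}, \eqref{QiN operator} and \eqref{QNN operator} gives the uniform form
\[
Q_i^N=A_{i+1}-A_i+B_{i-1}-B_i,\qquad 0\le i\le N,
\]
with the conventions $A_{N+1}=0$ and $B_{-1}=0$. The main care is needed at the endpoints $i=0$ and $i=N$. For $i=0$, the loss terms reduce to $-B_0$ only, since a cluster of size $0$ cannot donate. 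For $i=N$, the gain term $A_{N+1}$ is absent and so is the loss term $B_N$, since the truncation forbids receivers of size $N$ (the $j$-sums over receivers stop at $N-1$).

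Then I would split $\sum_i g_iQ_i^N$ into the $A$-part and the $B$-part and reindex. The $A$-part gives $\sum_{i=0}^N g_i(A_{i+1}-A_i)=\sum_{i=1}^{N}(g_{i-1}-g_i)A_i$. The $B$-part gives $\sum_{i=0}^N g_i(B_{i-1}-B_i)=\sum_{i=0}^{N-1}(g_{i+1}-g_i)B_i$. Dividing by $D$ and integrating yields \eqref{gi technique}.

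The only real obstacle is the bookkeeping of index ranges at the boundary indices. Once the vanishing conventions for $A_0$, $A_{N+1}$, $B_{-1}$ and $B_N$ are verified against the truncated operators, the rest is a telescoping rearrangement. Nonnegativity of $g_i$ plays no role in the identity itself; it matters only in later uses. As a check, the choices $g_i=1$ and $g_i=i$ make the right-hand side vanish, which recovers the conservation of the number of clusters and of mass.
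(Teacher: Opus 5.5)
Your argument is correct and is essentially the summation-by-parts computation behind \cite[Lemma 1]{EV2021}, which the paper simply cites. You write $Q_i^N=A_{i+1}-A_i+B_{i-1}-B_i$ with $A_0=A_{N+1}=B_{-1}=B_N=0$, correctly checked against \eqref{Q0N operator}--\eqref{QNN operator}, and then reindex, which gives exactly \eqref{gi technique}. You also supply the two adaptations that the cited homogeneous result does not cover: the Neumann condition gives $\int_\Omega \Delta f_{i,\epsilon}^N\,dx=0$ for the classical solution, and the damping factor $1+\epsilon\sum_{k}(f_{k,\epsilon}^N)^2$ is independent of $i$, so it factors out.
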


\medskip

Note that for $g_i=1, i$, the right hand side of the above relation \eqref{gi technique} vanishes and provides conservation of the total number and mass of particles in the system. We consider the kernel 
\[
K_{i,j}:=b_ia_j.
\]
Let us define $\mathcal{Q}_j$ recursively as
\[
\mathcal{Q}_0=1, \ \ \mathcal{Q}_j= \frac{a_{j-1}}{b_{j}} \mathcal{Q}_{j-1}.
\] 
Thanks to the assumption \eqref{sectorwise bound of kernel} that $b_* (i+1)\leq b_i\leq b^*(i+1)$ and $a_j\leq a^*(j+1)^{\alpha}$, for some $a^*, b^*, b_*>0$, we have the following estimate for $\mathcal{Q}_j$:
\[
0<\mathcal{Q}_j\leq \left(\frac{a^*}{b_*}\right)^{j}(j!)^{-1+\alpha}.
\]
Using the fact that $\alpha<1$, we have that
\begin{align}\label{sum Q}
    \sum_{j=0}^{\infty} \mathcal{Q}_j<+\infty.
\end{align}
Next, we define the entropy functional
\begin{align}\label{def:entropy, RTS}
E^N(f^N_{\epsilon}):=\sum_{i=0}^N\int_{\Omega} f_{i,\epsilon}^N \log \left(\frac{f_{i,\epsilon}^N}{\mathcal{Q}_i}\right).
\end{align}
Differentiating the entropy and using the homogeneous Neumann condition, we obtain
\begin{align*}
    &\partial_t E^N(f_{\epsilon}^N)=\\
    &\partial_t \left( \int_{\Omega} f_{0,\epsilon}^N \log \left(\frac{f_{0,\epsilon}^N}{\mathcal{Q}_0}\right)\right)+ \partial_t \left(\sum_{i=1}^{N-1}\int_{\Omega} f_{i,\epsilon}^N \log \left(\frac{f_{i,\epsilon}^N}{\mathcal{Q}_i}\right)\right)+\partial_t \left( \int_{\Omega} f_{N,\epsilon}^N \log \left(\frac{f_{N,\epsilon}^N}{\mathcal{Q}_N}\right)\right)\\
    =& \int_{\Omega} \partial_tf_{0,\epsilon}^N \log \left(\frac{f_{0,\epsilon}^N}{\mathcal{Q}_0}\right)+ \sum_{i=1}^{N-1}\int_{\Omega} \partial_t f_{i,\epsilon}^N \log \left(\frac{f_{i,\epsilon}^N}{\mathcal{Q}_i}\right)+\int_{\Omega} \partial_t f_{N,\epsilon}^N \log \left(\frac{f_{N,\epsilon}^N}{\mathcal{Q}_N}\right)\\
    =& -\sum_{i=0}^N \int_{\Omega}d_i \frac{|\nabla f^N_{i,\epsilon}|^2}{f^N_{i,\epsilon}}+ \int_{\Omega} \mathcal{G}^{-1}\log \left(\frac{f_{0,\epsilon}^N}{\mathcal{Q}_0}\right) \left( f_{1,\epsilon}^N b_1 \sum_{j=0}^{N-1}a_j f_{j,\epsilon}^N-f_{0,\epsilon}^N a_0 \sum_{j=1}^N b_j f_{j,\epsilon}^N\right)\\
    +& \sum_{i=1}^{N-1}\int_{\Omega} \mathcal{G}^{-1}\log \left( \frac{f_{i,\epsilon}^N}{\mathcal{Q}_i}\right) \left( f_{i+1,\epsilon}^Nb_{i+1} \sum_{j=0}^{N-1}a_j f_{j,\epsilon}^N-f_{i,\epsilon}^N b_i \sum_{j=0}^{N-1} a_jf_{j,\epsilon}^N\right)\\
    +& \sum_{i=1}^{N-1}\int_{\Omega} \mathcal{G}^{-1}\log \left( \frac{f_{i,\epsilon}^N}{\mathcal{Q}_i}\right)\left(-f_{i,\epsilon}^N a_i \sum_{j=1}^N b_j f_{j,\epsilon}^N+f_{i-1,\epsilon}^Na_{i-1} \sum_{j=1}^N b_jf_{j,\epsilon}^N\right)\\
    +& \int_{\Omega} \mathcal{G}^{-1}\log \left(\frac{f_{N,\epsilon}^N}{\mathcal{Q}_N}\right) \left( -f_{N,\epsilon}^N b_N \sum_{j=0}^{N-1}a_j f_{j,\epsilon}^N+ f_{N-1,\epsilon}^N a_{N-1} \sum_{j=1}^N b_j f_{j,\epsilon}^N\right),
\end{align*}
where $\displaystyle{\mathcal{G}:=1+\epsilon\sum_{k=0}^N\left(f_{k,\epsilon}^N\right)^2}$. Define $\displaystyle{A^N_{\epsilon}= \sum_{j=0}^{N-1} a_jf_{j,\epsilon}^N}$ and $\displaystyle{B^N_{\epsilon}=\sum_{j=1}^{N}b_jf_{j,\epsilon}^N}$. Furthermore define
\begin{equation*}
    I_{j,\epsilon}^N=\left\{
    \begin{aligned}
        & a_jf_{j,\epsilon}^N B_{\epsilon}^N-b_{j+1}f_{j+1,\epsilon}^N A_{\epsilon}^N, \ \ && 0\leq j\leq N-1\\
        & 0 \ \ && \text{otherwise}.
    \end{aligned}
    \right .
\end{equation*}
The time derivative of entropy functional can be rewritten as
\begin{align}\label{entropy dissipation functional RTS}
\partial_t E^N(f_{\epsilon}^N)= -\sum_{i=0}^N \int_{\Omega} d_i\frac{|\nabla f^N_{i,\epsilon}|^2}{f^N_{i,\epsilon}}+ \int_{\Omega}\sum_{j=0}^N \mathcal{G}^{-1} (I_{j-1,\epsilon}^N-I_{j,\epsilon}^N) \log \left(\frac{f_{j,\epsilon}^N}{\mathcal{Q}_j}\right).
\end{align}
Thanks to \cite[Lemma 5]{EV2021}, we conclude that
\[
D_N(f^N_{\epsilon}):= -\int_{\Omega}\mathcal{G}^{-1}\sum_{j=0}^N (I_{j-1,\epsilon}^N-I_{j,\epsilon}^N) \log \left(\frac{f_{j,\epsilon}^N}{\mathcal{Q}_j}\right)\geq 0.
\]
The entropy-entropy dissipation inequality can be expressed as
\begin{align}\label{entropy diss ineq RTS}
     E^N(f_{\epsilon}^N)(t)+\sum_{i=0}^N \int_0^t\int_{\Omega} d_i\frac{|\nabla f_{i,\epsilon}^N|^2}{f^N_{i,\epsilon}}+ \int_0^t D_N =  E^N(f^N_{\epsilon})(0).
\end{align}
We denote the second quantity as Fisher information, i.e.,
\begin{align}\label{Fisher info RTS}
    \mathcal{F}^N(f_{\epsilon}^N):= \sum_{i=0}^N \int_0^t\int_{\Omega} \frac{|\nabla f^N_{i,\epsilon}|^2}{f^N_{i,\epsilon}}.
\end{align}
We intend to show that the Fisher information is uniformly bounded (uniform in $N$ and $\epsilon$). Note that the quantity is nonnegative. We start analyzing the entropy functional. It can be rewritten as:
\begin{align*}
E^N(f_{\epsilon}^N)=&\sum_{i=0}^N\left(\int_{\{x\in\Omega: \ f_{i,\epsilon}^N(x)<\mathcal{Q}_i\}} f_{i,\epsilon}^N \log \left(\frac{f_{i,\epsilon}^N}{\mathcal{Q}_i}\right)+\int_{\{x\in\Omega: \ f_{i,\epsilon}^N(x)\geq \mathcal{Q}_i\}} f_{i,\epsilon}^N \log \left(\frac{f_{i,\epsilon}^N}{\mathcal{Q}_i}\right)\right)\\
&:=E_1^N(f_{\epsilon}^N)+E_2^N(f_{\epsilon}^N).
\end{align*}
The second quantity $\displaystyle{E_2^N:=\sum_{i=0}^N \int_{\{x\in\Omega: \ f_{i,\epsilon}^N(x)\geq \mathcal{Q}_i\}} f_{i,\epsilon}^N \log \left(\frac{f_{i,\epsilon}^N}{\mathcal{Q}_i}\right)}$ is nonnegative. Thanks to the fact that $\min_{\{x\geq 0\}}x\log \left(\frac{x}{\mathcal{Q}_i}\right)=-e^{-1}\mathcal{Q}_i$, and \eqref{entropy diss ineq RTS}, we obtain
\[
-e^{-1}\sum_{i=0}^N \mathcal{Q}_i\mathcal{L}\{x\in\Omega: \ f_{i,\epsilon}^N(x)<\mathcal{Q}_i \} \leq E_1^N(f_{\epsilon}^N),
\]
where $\mathcal{L}$ stands for the Lebesgue measure. The following estimate holds
\[
-\infty <-e^{-1}|\Omega|\sum_{i=0}^{\infty} \mathcal{Q}_i \leq -e^{-1}\sum_{i=0}^N \mathcal{Q}_i\mathcal{L}\{x\in\Omega: \ f_{i,\epsilon}^N(x)<\mathcal{Q}_i \} \leq E_1^N(f_{\epsilon}^N)\leq E^N(f_{\epsilon}^N)(0),
\]
where the uniform bound on the l.h.s. follows from the non-negativity of the solution and \eqref{sum Q} and the uniform bound on r.h.s follows from \eqref{def:entropy in}. From the above analysis, we conclude that 
\begin{align}\label{uniform bound on fisher info RTS}
    \mathcal{F}^N(f_{\epsilon}^N):= \sum_{i=0}^N \int_0^t\int_{\Omega} \frac{|\nabla f^N_{i,\epsilon}|^2}{f^N_{i,\epsilon}} \leq M_{\mathcal{F}}<+\infty,
\end{align}
where $M_{\mathcal{F}}$ is a positive constant, uniform with the index `$N$' and `$\epsilon$'.

It remains to remove the damping. We have the following theorem.
\begin{thm}\label{existence of solution: truncated system}
For every $0 \le i < N$, there exists a nonnegative function 
\[
f_i^N \in L^1\bigl((0,T);W^{1,1}(\Omega)\bigr),
\]
which is a nonnegative weak global-in-time solution to the truncated system \eqref{truncated system}. Moreover, $f_i^N$ is obtained as the limit of $f_{i,\varepsilon}^N$ in $L^1\bigl((0,T);W^{1,1}(\Omega)\bigr)$, where $f_{i,\varepsilon}^N$ solves the regularized truncated system
\eqref{regularized truncated system}. Furthermore, we have following estimates 

\begin{subequations}
\label{L2 estimate: truncated system}
\begin{align}
\int_{\Omega} \left( \sum_{i=0}^N (i+1) f_i^N(t,x)\right)\, dx
&\leq
\left\| \sum_{i=0}^N (i+1) f_{i,0}\right\|_{L^1(\Omega)},
\qquad \forall\, t\geq 0,
\label{L2 estimate: truncated system-a}
\\
\int_0^T \int_\Omega
\left( \sum_{i=0}^{N} i f_i^N(t,x) \right)^2
\,dx\,dt
&\;\lesssim\;
 \frac{d^*}{d_*}
\left\|
\sum_{i=0}^{\infty} i f_{i,0}
\right\|_{L^2(\Omega)}^{2},
\label{L2 estimate: truncated system-b}
\\
\int_0^T \int_\Omega
\left( \sum_{i=0}^{N} f_i^N(t,x) \right)^2
\,dx\,dt
&\;\lesssim\;
 \frac{d^*}{d_*} 
\left\|
\sum_{i=0}^{\infty} f_{i,0}
\right\|_{L^2(\Omega)}^{2},
\label{L2 estimate: truncated system-c}
\\
\int_0^T \int_\Omega
\left( \sum_{i=0}^{N} (i+1) f_i^N(t,x) \right)^2
\,dx\,dt
&\;\lesssim\;
 \frac{d^*}{d_*} 
\left\|
\sum_{i=0}^{\infty} (i+1) f_{i,0}
\right\|_{L^2(\Omega)}^{2},
\label{L2 estimate: truncated system-d}
\\
\sum_{i=0}^N \int_0^T \int_{\Omega}
\frac{|\nabla f_i^N|^2}{f_i^N}
&\leq M_{\mathcal{F}}.
\label{L2 estimate: truncated system-e}
\end{align}
\end{subequations}
where $M_{\mathcal{F}}$ is a positive constant independent of $N$ and $\epsilon$ and `$T$'.
\end{thm}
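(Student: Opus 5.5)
Fix $N$ and $T$ and pass to the limit $\epsilon\to0$ in \eqref{regularized truncated system}, following the strategy of \cite{Pierre2010} for quasi-positive, mass-conserving systems with $L^1$ nonlinearities.

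\textbf{Step 1: estimates uniform in $\epsilon$.} Take $g_i=i+1$ in Lemma~\ref{mass and momentum with auxiliary equ}. The right-hand side vanishes, so $\int_\Omega\sum_i(i+1)f^N_{i,\epsilon}(t)$ is constant. Since the $f^N_{i,\epsilon}$ are nonnegative, this gives \eqref{L2 estimate: truncated system-a} at the $\epsilon$-level. Proposition~\ref{L2 estimate: truncated regularized system} gives the $L^2$ bounds, and \eqref{uniform bound on fisher info RTS} gives the Fisher information bound. The kernel satisfies $K_{i,j}\le C(N)$ for $i,j\le N$, so $|Q^N_{i,\epsilon}(f^N_\epsilon)|\le C(N)\bigl(\sum_k f^N_{k,\epsilon}\bigr)^2$. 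Hence the sources are bounded in $L^1((0,T)\times\Omega)$ uniformly in $\epsilon$.

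\textbf{Step 2: compactness.} The heat semigroup with Neumann conditions maps $L^1$ data and $L^1$ sources compactly into $L^1(0,T;W^{1,1}(\Omega))$; this is the classical result of Baras--Pierre used in \cite{Pierre2010}. Therefore, along a subsequence, $f^N_{i,\epsilon}\to f^N_i$ strongly in $L^1(0,T;W^{1,1})$ and almost everywhere. As a cross-check on the gradients, $\|\nabla f\|_{L^1}\le \|\nabla f/\sqrt f\|_{L^2}\|\sqrt f\|_{L^2}$, which is bounded uniformly by the Fisher information bound and the $L^1$ bound. Nonnegativity passes to the limit.

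\textbf{Step 3: passing to the limit in the source.} This is the main obstacle, because the source is quadratic and we only control it in $L^1$. We use the uniform bound \eqref{L2 estimate: regularized truncated system-c}: every product $f^N_{i,\epsilon}f^N_{j,\epsilon}$ is dominated by $\bigl(\sum_k f^N_{k,\epsilon}\bigr)^2$, and the $L^2$ bound alone does not give equi-integrability of this square. To obtain it, combine the a.e. convergence with the de la Vallée-Poussin/Vitali argument of \cite{desvillettes2007global,Pierre2010}. The dual problem gives an $L^2$ bound with an arbitrary Fatou margin, but the robust route is the duality trick:
\[
\int\!\!\int \Bigl(\sum f\Bigr)\Bigl(\sum d_i f\Bigr)\le C\,\|\cdot\|_{L^2}^2 .
\]
Apply this to $\mathbf 1_{\{\sum f>M\}}$-type truncations, or equivalently use the Fisher bound: $\sqrt{f}\in L^2(0,T;H^1)$ together with the $L^\infty(L^1)$ bound yields, by interpolation, $f\in L^{1+2/d}$. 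Hence each $f$ is in $L^p$ with some $p>1$.

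Since the quadratic term needs $p>2$, fall back on the standard Pierre argument. We have a.e. convergence of the denominators $\mathcal G^{-1}\to1$, and by Fatou the limit $Q_i^N(f^N)\in L^1$. To identify it, write the equation for $\sum_i(i+1)f^N_{i,\epsilon}$, which has zero source, and use that the losses are nonpositive, so that the loss terms are uniformly integrable. Concretely, the $L^1$ bound on the positive-part sources, together with the mass identity, gives the Pierre characterization. Strong a.e. convergence plus this uniform integrability then yields
\[
Q^N_{i,\epsilon}(f^N_\epsilon)\to Q^N_i(f^N) \quad\text{in } L^1 .
\]
Passing to the limit in the weak formulation gives the weak solution.

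\textbf{Step 4: the limiting estimates.} The bounds \eqref{L2 estimate: truncated system-b}--\eqref{L2 estimate: truncated system-d} follow from Fatou's lemma applied to the a.e. convergent sequences. For \eqref{L2 estimate: truncated system-e}, note that $2\nabla\sqrt{f^N_{i,\epsilon}}$ is bounded in $L^2$, so it converges weakly to $2\nabla\sqrt{f^N_i}$ (identified through the a.e. limit). Weak lower semicontinuity of the $L^2$ norm then yields the Fisher bound with the same constant $M_{\mathcal F}$, which depends only on the initial entropy and $|\Omega|\sum_i\mathcal Q_i$.
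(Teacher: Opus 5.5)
Your Steps 1, 2 and 4 agree with the paper, whose proof is a one-line appeal to \cite{Pierre2010}, Proposition~\ref{L2 estimate: truncated regularized system}, \eqref{uniform bound on fisher info RTS} and Fatou's lemma. The gap is in Step 3. There you conclude $Q^N_{i,\epsilon}(f^N_\epsilon)\to Q^N_i(f^N)$ in $L^1$, on the grounds that the loss terms are uniformly integrable because they are nonpositive. Nothing you have gives equi-integrability of a quadratic expression:
\begin{itemize}
\item \eqref{L2 estimate: regularized truncated system-c} only bounds $\bigl(\sum_k f^N_{k,\epsilon}\bigr)^2$ in $L^1$.
\item The pointwise identity $\sum_i(i+1)Q^N_{i,\epsilon}=0$ only says that weighted gains equal weighted losses.
\item The Fisher bound gives $f\in L^{1+2/d}$, which makes the products $f_if_j$ equi-integrable only when $d=1$.
\item The sign of the loss term $L_i$ does not help either. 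Testing with $\mathbf{1}_{\{f_i>k\}}$ only yields $\iint_{\{f_i>k\}}L_i\le\int_\Omega(f_{i,0}-k)^++\iint_{\{f_i>k\}}G_i$, and the gain term $G_i$ is just as uncontrolled.
\end{itemize}
What you actually have is $Q^N_{i,\epsilon}(f^N_\epsilon)\overset{*}{\rightharpoonup}Q^N_i(f^N)+\mu_i$ in measures. The defect measures $\mu_i$ are unknown and satisfy only $\sum_i(i+1)\mu_i=0$, so the limit equation is not yet identified.

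The argument of \cite{Pierre2010} that the paper invokes never proves $L^1$ convergence of the nonlinearity. Instead it shows $\mu_i\ge0$ and then uses conservation, as follows.
\begin{itemize}
\item Take a concave nondecreasing truncation $T_k$ and set $w_\epsilon=f^N_{i,\epsilon}+\eta\sum_{j\ne i}f^N_{j,\epsilon}$.
\item On the support of $T_k'(w_\epsilon)$ every component is bounded by $C_k/\eta$. Hence $T_k'(w_\epsilon)Q^N_{j,\epsilon}(f^N_\epsilon)$ is uniformly bounded and converges by dominated convergence.
\item The term $-d_iT_k''(w_\epsilon)|\nabla w_\epsilon|^2$ is nonnegative and can be dropped when testing against $\xi\ge0$.
\item The cross terms produced by $d_j\ne d_i$ are $O(\eta^{1/2})$ by Lemma~\ref{lem:trunc-energy}.
\item Let $\epsilon\to0$, then $\eta\to0$, then $k\to\infty$, using $Q^N_i(f^N)\in L^1$ from Fatou. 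This gives $\partial_tf^N_i-d_i\Delta f^N_i\ge Q^N_i(f^N)$ in $\mathcal{D}'$, i.e.\ $\mu_i\ge0$.
\item The equation for $\sum_i(i+1)f^N_i$ is linear, so it passes to the limit, and $\sum_i(i+1)Q^N_i\equiv0$. Hence $\sum_i(i+1)\mu_i=0$, so every $\mu_i$ vanishes and $f^N$ is a weak solution.
\end{itemize}
This is the missing idea. You should either carry it out or cite it as a black box, as the paper does. The paper later reruns exactly this truncation with $\Phi^\Lambda(f_I+\eta(f_{I+1}+f_{I-1}))$ for the limit $N\to\infty$. There it uses the Fisher information to make the $(\Phi^\Lambda)''$-term vanish, rather than relying on its sign.
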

\begin{proof}
    Proof follows form \cite{Pierre2010}, Proposition \ref{L2 estimate: truncated regularized system}, relation \eqref{uniform bound on fisher info RTS} and Fatou's lemma.
\end{proof}
We want to further capture properties of the solution $f_i^N$ through sublevel sets. To this end, we introduce the following function, which can be viewed as the truncation of the identity function. The existence of such a function can be found in \cite{JF2015}.
\subsection{Truncation to identity}\label{truncation to identity multi D} Let $\Lambda>0$. For all $0\leq i\leq N$, we consider the functions $\phi_i^{\Lambda}: (\mathbb{R}_{\geq 0})^{N+1}\to\mathbb{R}_{\geq 0}\to$, satisfies
\begin{itemize}
    \item [$\bullet$] $\phi_i^{\Lambda}\in C^2((\mathbb{R}_{\geq 0})^{N+1})$.
    \item[$\bullet$] There exists $K_1>0$, such that $\displaystyle{\sqrt{v_j}\sqrt{v_k}\left| \partial_j \partial_k (\phi_i^{\Lambda}(v)\right|\leq K_1}$, \ for all $v\in(\mathbb{R}_{\geq 0})^{N+1}$, $j,k\in\{0,\cdots,N\}$ and for all $\Lambda$.
    \item[$\bullet$] For each $\Lambda>0$, $\mathrm{supp}\left\{D(\phi^{\Lambda}_i)\right\}$ is compact, say  $\mathrm{supp}\left\{D(\phi^{\Lambda}_i)\right\} \subset B(0,\Lambda^*)$. 
    \item[$\bullet$] For all $v\in(\mathbb{R}_{\geq 0})^{N+1}$, $\displaystyle{\lim_{\Lambda\to\infty} \partial_j \phi_i^{\Lambda}(v)=\delta_{ij}}$.
    \item[$\bullet$] There exists $K_2> 0$, such that $\displaystyle{\left| \partial_j \phi_i^{\Lambda}(v)\right|\leq K_2, \ \forall\, v\in(\mathbb{R}_{\geq 0})^{N+1},  \ \forall\, \Lambda>0}$.
    \item[$\bullet$] $\phi^{\Lambda}_i(v)=v_i$, \ $\forall\, v\in(\mathbb{R}_{\geq 0)})^{N+1}$, where $\sum_{j=0}^N v_j\leq \Lambda$.
    \item[$\bullet$] For all $K_3>0$, $\sum_{i=0}^N v_j>K_3$, implies $\sum_{i=0}^N \phi_i^{\Lambda}(v)\geq \min\{K_3,\Lambda\}$, for all $\Lambda>0$ and $v\in(\mathbb{R}_{\geq 0})^{N+1}$.
    \item[$\bullet$] $\displaystyle{\lim_{\Lambda\to\infty} \sup_{|v|\leq K}\left| \partial_j \partial_k \phi_i^{\Lambda}(v)\right|=0, \ \forall \, K>0}$. 
\end{itemize}
\begin{rem}\label{existence of such truncation to identity}
  A suitable function with the required properties can be constructed as in \cite{JF2015}. Let \(\mathcal{X}\in C^{\infty}(\mathbb{R})\) be a smooth, nonincreasing function taking values in \([0,1]\), such that
\[
\mathcal{X} \equiv 1 \quad \text{on } (-\infty,0),
\qquad
\mathcal{X} \equiv 0 \quad \text{on } (1,\infty).
\]
For \(\Lambda>0\), define
\[
\phi_i^{\Lambda}(v)
:=
v_i\,\mathcal{X}\!\left(\frac{\sum_{k=1}^{S} v_k - \Lambda}{\Lambda}\right)
+
3\Lambda\left(
1-\mathcal{X}\!\left(\frac{\sum_{k=1}^{S} v_k - \Lambda}{\Lambda}\right)
\right).
\]
One can verify that this function satisfies all the properties stated in \ref{truncation to identity multi D}. For further details, we refer the reader to \cite{JF2015}.
\end{rem}
Let us now look at the damped truncated system \eqref{regularized truncated system}. For all $\xi\in C_c^{\infty}([0,T)\times\Omega)$, the following identity holds:
\begin{align}
    \int_0^T \int_{\Omega} \partial_t \left(\phi_i^{\Lambda}(f_{\epsilon}^N)\right) \xi \, dx\, dt=& \int_0^T \int_{\Omega} \sum_{j=0}^N \partial_j \phi_i^{\Lambda}\left(f_{\epsilon}^N\right) \frac{\partial f_{j,\epsilon}^N}{\partial t} \xi \, dx\, dt \label{RTS, time derivative through truncation}\\
    \underset{\rm{integration\  by \ parts}}{=}-\int_{\Omega} \phi^{\Lambda}_i \left(f_{0}^N\right) \xi(0)\, dx -&\int_{0}^T\int_{\Omega} \phi^{\Lambda}_i \left(f_{\epsilon}^N \right) \partial_t\xi \, dx\, dt, \nonumber
\end{align}
where $f_0^N:=(f_{0,0},\cdots f_{N,0})$. We analyze the second term of the above expression further:
\begin{align*}
    \int_0^T \int_{\Omega} \sum_{j=0}^N &\partial_j \phi_i^{\Lambda}\left(f_{\epsilon}^N\right) \frac{\partial f_{j,\epsilon}^N}{\partial t} \xi \, dx\, dt= \sum_{j=0}^N \int_0^T\int_{\Omega} \left(d_j \Delta f_{j,\epsilon}^N+Q_{j,\epsilon}^N\right) \partial_j \phi_i^{\Lambda}\left(f_{\epsilon}^N\right)  \xi \, dx\, dt.\\
    =& \sum_{j=0}^N \int_0^T \int_{\Omega} \partial_j \phi_i^{\Lambda}\left(f_{\epsilon}^N\right) Q^N_{j,\epsilon}  \xi \, dx\, dt-\sum_{j=0}^N \int_0^T \int_{\Omega} d_j \partial_j \phi_i^{\Lambda}\left(f_{\epsilon}^N\right) \nabla f_{j,\epsilon}^N \cdot \nabla \xi \, dx\, dt\\
    & \ \ \ \ \ -\sum_{j,k=0}^N \int_0^T \int_{\Omega} d_j \partial_{jk} \phi_i^{\Lambda}\left(f_{\epsilon}^N\right) \nabla f_{j,\epsilon}^N \cdot\nabla f_{k,\epsilon}^N \xi \, dx\, dt.
\end{align*}
Combining the above estimate with \eqref{RTS, time derivative through truncation}, we obtain
\begin{align}
-\int_{\Omega} &\phi^{\Lambda}_i \left(f_{0}^N\right) \xi(0)\, dx -\int_{0}^T\int_{\Omega} \phi^{\Lambda}_i \left(f_{\epsilon}^N \right) \partial_t\xi \, dx\, dt \label{RTS: composition with the smooth function}\\
=& \sum_{j=0}^N \int_0^T \int_{\Omega} \partial_j \phi_i^{\Lambda}\left(f_{\epsilon}^N\right) Q^N_{j,\epsilon}  \xi \, dx\, dt- \sum_{j=0}^N \int_0^T \int_{\Omega} d_j \partial_j \phi_i^{\Lambda}\left(f_{\epsilon}^N\right) \nabla f_{j,\epsilon}^N \cdot \nabla \xi \, dx\, dt \nonumber\\
    & \ \ \ \ \ -\sum_{j,k=0}^N \int_0^T \int_{\Omega} d_j \partial_{jk} \phi_i^{\Lambda}\left(f_{\epsilon}^N\right) \nabla f_{j,\epsilon}^N \cdot\nabla f_{k,\epsilon}^N \xi \, dx\, dt.\nonumber    
\end{align}
Here, the last integral defined through the following estimate
\begin{align}
    d_j \int_{\{| f_{j,\epsilon}^N|< \Lambda^*\}} \left| \nabla f_{j,\epsilon}^N\right|^2 \, dx\, dt \leq \Lambda^* \left( \int_0^T\int_{\Omega} | Q_{j,\epsilon}^N| \, dx\, dt +\int_{\Omega} | f_{j,0}|\, dx \right),
\end{align}
where $\Lambda^*$ defined as the radius of the compact support of the derivative of $\phi_{i}^{\Lambda}$, as described in \ref{truncation to identity multi D}. The above estimate is the consequence of the following lemma.
\begin{lem}[Truncation energy estimate]\label{lem:trunc-energy}
Let $d>0$, let $\Theta \in L^{1}(Q_T)$, and let $\mathcal{F}_{\rm{in}} \in L^{1}(\Omega)$.
Let $\mathcal{F}$ be the solution of the system
\begin{equation}\label{eq:trunc-heat}
\begin{cases}
\partial_t \mathcal{F} - d \Delta \mathcal{F} = \Theta & \text{in } (0,T)\times \Omega \\[2mm]
\nabla \mathcal{F} \cdot n = 0, & \text{on } (0,T)\times\partial\Omega \\[2mm]
\mathcal{F}(0,\cdot) = \mathcal{F}_{\rm{in}}, & \text{in } \Omega.
\end{cases}
\end{equation}
Then, for every $M>0$, the following estimate holds:
\begin{equation}\label{eq:trunc-energy}
d \int_{\{|\mathcal{F}|\le M\}} |\nabla \mathcal{F}|^2
\le
M\left(
\int_{(0,T)\times\Omega} |\Theta|
+
\int_{\Omega} |\mathcal{F}_{\rm{in}}|
\right).
\end{equation}
\end{lem}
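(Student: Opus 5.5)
The plan is the standard truncation test-function argument. Introduce the Lipschitz truncation $T_M(s):=\max\{-M,\min\{s,M\}\}$ and its primitive $\Psi_M(s):=\int_0^s T_M(\sigma)\,d\sigma$. Note that $0\le \Psi_M(s)\le M|s|$ and $|T_M|\le M$. Argue first for a smooth (classical) solution $\mathcal{F}$, obtained for smooth $\Theta$ and $\mathcal{F}_{\rm in}$ (which is the setting where the lemma is applied, since $f^N_{j,\epsilon}$ is smooth). The general case then follows by approximating $\Theta$ in $L^1(Q_T)$ and $\mathcal{F}_{\rm in}$ in $L^1(\Omega)$. Since the right-hand side of \eqref{eq:trunc-energy} is continuous in these norms, $L^1$-stability of the heat equation gives $\mathcal{F}_n\to\mathcal{F}$ in $L^1$. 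Lower semicontinuity (Fatou, using $\nabla T_M(\mathcal{F}_n)\rightharpoonup\nabla T_M(\mathcal{F})$ weakly in $L^2$) then passes the estimate to the limit.

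For smooth $\mathcal{F}$, multiply the equation by $T_M(\mathcal{F})$ and integrate over $(0,t)\times\Omega$. The time term equals
\[
\int_\Omega \Psi_M(\mathcal{F}(t))\,dx-\int_\Omega \Psi_M(\mathcal{F}_{\rm in})\,dx .
\]
For the diffusion term, integrate by parts; the boundary term vanishes by the homogeneous Neumann condition. Since $T_M'(s)=\mathbf 1_{\{|s|<M\}}$ almost everywhere and $\nabla T_M(\mathcal{F})=\mathbf 1_{\{|\mathcal{F}|<M\}}\nabla\mathcal{F}$, the diffusion term becomes
\[
d\int_0^t\!\int_{\{|\mathcal{F}|<M\}}|\nabla\mathcal{F}|^2 .
\]
The set $\{|\mathcal{F}|=M\}$ carries no gradient almost everywhere (Stampacchia), so $<$ may be replaced by $\le$. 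The source term is bounded by $M\int_{Q_T}|\Theta|$.

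Dropping the nonnegative quantity $\int_\Omega\Psi_M(\mathcal{F}(t))$ and using $\Psi_M(\mathcal{F}_{\rm in})\le M|\mathcal{F}_{\rm in}|$ gives exactly \eqref{eq:trunc-energy} with $t=T$.

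The only delicate point is that $T_M$ is merely Lipschitz, so the chain rule $\partial_t\Psi_M(\mathcal{F})=T_M(\mathcal{F})\partial_t\mathcal{F}$ and the gradient identity must be justified. Two routes are available. One is to replace $T_M$ by a smooth nondecreasing approximation $T_M^\delta$ with $|T_M^\delta|\le M$ and $(T_M^\delta)'\ge \mathbf 1_{\{|s|\le M-\delta\}}$, and let $\delta\to0$ by monotone convergence. The other is to invoke the standard chain rule for Lipschitz functions in $W^{1,1}$. I would take the smooth-approximation route, since it simultaneously handles the $\le$ versus $<$ issue.
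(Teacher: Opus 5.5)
Your argument is correct and is the standard proof behind \cite[Lemma 5.7]{Pierre2010}, which the paper only cites without proof: test with $T_M(\mathcal{F})$, discard $\int_\Omega \Psi_M(\mathcal{F}(t))\ge 0$, and use $\Psi_M(s)\le M|s|$ and $|T_M|\le M$. One small correction: your smooth approximation with $(T_M^\delta)'\ge \mathbf{1}_{\{|s|\le M-\delta\}}$ only controls the integral over $\{|\mathcal{F}|<M\}$ as $\delta\to 0$, so it does not settle the $<$ versus $\le$ issue by itself; you still need the Stampacchia fact, or you can apply the bound with $M'>M$ and let $M'\downarrow M$.
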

The proof of the lemma can be found in \cite[Lemma 5.7]{Pierre2010}. The limiting asymptotics $\epsilon\to 0$ in \eqref{RTS: composition with the smooth function}, yields the following Proposition.
\begin{prop}\label{TS: composition with the truncated smooth function}
    For all $0\leq i\leq N$, let $f_i^N$ be the $L^1((0,T);W^{1,1}(\Omega))$ solution to \eqref{truncated system}, obtained as the limit of $f_{i,\varepsilon}^N$ in $L^1\bigl((0,T);W^{1,1}(\Omega)\bigr)$, where $f_{i,\varepsilon}^N$ solves the regularized truncated system
\eqref{regularized truncated system}. Let $\phi_i^{\Lambda}$ be the function defined in \ref{truncation to identity multi D}. Then the following holds.
\begin{align}
 -\int_{\Omega} &\phi^{\Lambda}_i \left(f_{0}^N\right) \xi(0)\, dx -\int_{0}^T\int_{\Omega} \phi^{\Lambda}_i \left(f^N \right) \partial_t\xi \, dx\, dt   \label{TS: composition with the truncation of identity}\\
=& \sum_{j=0}^N \int_0^T \int_{\Omega} \partial_j \phi_i^{\Lambda}\left(f^N\right) Q^N_{j}  \xi \, dx\, dt- \sum_{j=0}^N \int_0^T \int_{\Omega} d_j \partial_j \phi_i^{\Lambda}\left(f^N\right) \nabla f_{j}^N \cdot \nabla \xi \, dx\, dt \nonumber\\
    & \ \ \ \ \ + \int_0^T \int_{\Omega} \xi \, d\mu_i^{\Lambda}(x,t),\nonumber    
\end{align}
where $\mu^{\Lambda}_i$ denotes a sequence of Radon measure satisfying
\begin{align}\label{TS: measure seq goes to zero}
    \lim_{\Lambda\to \infty}\left| \mu_i^{\Lambda}\right| ([0,T)\times\Omega)=0,
\end{align}
for all $T>0$ and all $i$. 
\end{prop}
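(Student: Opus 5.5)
Starting from the identity \eqref{RTS: composition with the smooth function} for the damped system, I will pass to the limit $\epsilon\to0$ term by term at fixed $N$ and $\Lambda$. The measure $\mu_i^\Lambda$ will be the weak-$*$ limit of
\[
\mu_{i,\epsilon}^{\Lambda}:=-\sum_{j,k=0}^N d_j\,\partial_{jk}\phi_i^{\Lambda}(f_\epsilon^N)\,\nabla f_{j,\epsilon}^N\cdot\nabla f_{k,\epsilon}^N ,
\]
taken along a subsequence. Its total variation will then be controlled uniformly in $\epsilon$ by a quantity that tends to $0$ as $\Lambda\to\infty$.

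\textbf{Step 1 (linear terms).} By Theorem \ref{existence of solution: truncated system}, $f_{j,\epsilon}^N\to f_j^N$ in $L^1((0,T);W^{1,1}(\Omega))$. Along a subsequence the convergence also holds a.e., together with the gradients. Since $\phi_i^\Lambda$ is continuous and sublinear (in fact bounded by $\max(\sum_j v_j,3\Lambda)$ up to constants), dominated convergence gives $\phi_i^\Lambda(f^N_\epsilon)\to\phi_i^\Lambda(f^N)$ in $L^1$. The $\partial_t\xi$ term and the initial term pass to the limit directly. For the diffusion term, $\partial_j\phi_i^\Lambda$ is bounded by $K_2$ and continuous, so $\partial_j\phi_i^\Lambda(f_\epsilon^N)\nabla f_{j,\epsilon}^N\to \partial_j\phi_i^\Lambda(f^N)\nabla f_j^N$ in $L^1$, by strong $L^1$ convergence of the gradients plus dominated convergence on the product.

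\textbf{Step 2 (reaction term).} The key point is that $\partial_j\phi_i^\Lambda$ is supported in $B(0,\Lambda^*)$. On that support all $f_{k,\epsilon}^N$ are bounded by $\Lambda^*$, so $|\partial_j\phi_i^\Lambda(f^N_\epsilon)Q^N_{j,\epsilon}|\le K_2 C(N,\Lambda^*)$ uniformly. The factor $\mathcal{G}^{-1}\to1$ a.e., hence dominated convergence yields $\partial_j\phi_i^\Lambda(f_\epsilon^N)Q^N_{j,\epsilon}\to\partial_j\phi_i^\Lambda(f^N)Q^N_j$ in $L^1$. No integrability of $Q_j^N$ is needed here; that is exactly why renormalization is used.

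\textbf{Step 3 (the measure, main obstacle).} Using the second bullet in \ref{truncation to identity multi D} and Cauchy--Schwarz,
\[
|\mu^\Lambda_{i,\epsilon}|\le d^*\sum_{j,k}\big|\sqrt{f_j f_k}\,\partial_{jk}\phi_i^\Lambda\big|\,\frac{|\nabla f_{j,\epsilon}^N|}{\sqrt{f_{j,\epsilon}^N}}\frac{|\nabla f_{k,\epsilon}^N|}{\sqrt{f_{k,\epsilon}^N}} .
\]
This gives a uniform $L^1$ bound of order $d^*K_1(N+1)M_{\mathcal F}$ from \eqref{uniform bound on fisher info RTS}, so a weak-$*$ limit $\mu_i^\Lambda\in RM$ exists. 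Every other term in \eqref{RTS: composition with the smooth function} converges, so the limit identity \eqref{TS: composition with the truncation of identity} holds with $|\mu_i^\Lambda|\le\liminf_\epsilon|\mu^\Lambda_{i,\epsilon}|$ by lower semicontinuity.

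The crux is showing that this bound vanishes as $\Lambda\to\infty$. A bare uniform Fisher bound is not enough; smallness has to come from either the mass region or the scaling. I will split the domain according to $\sum_k f_{k,\epsilon}^N\le R$ versus $>R$.

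On $\{\sum_k f_k\le R\}$, the last bullet gives $\sup_{|v|\le R}|\partial_{jk}\phi_i^\Lambda|\to0$, so that part is bounded by $o_\Lambda(1)\cdot C(R)\,\sum_j\int|\nabla f_{j,\epsilon}^N|^2\mathbf 1_{\{f_j\le R\}}$. This is finite by Lemma \ref{lem:trunc-energy} together with the $L^1$ bound on $Q^N_{j,\epsilon}$, the latter coming from the $L^2$ estimates of Proposition \ref{L2 estimate: truncated regularized system}.

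On the region $\{\sum_k f_k>R\}$, I would exploit that $\partial_{jk}\phi_i^\Lambda$ vanishes unless $\Lambda\le\sum_k v_k\le 2\Lambda$. Taking $R=\Lambda$, the contribution is bounded by $K_1 d^*(N+1)\sum_j\int_{\{\sum f\ge\Lambda\}}|\nabla f_j|^2/f_j$. I need this to tend to $0$ uniformly in $\epsilon$, which requires equi-integrability of the Fisher densities $|\nabla f_{j,\epsilon}^N|^2/f_{j,\epsilon}^N$. I would obtain it from the strong $L^1$ convergence of $\nabla f_{j,\epsilon}^N$, upgraded by Vitali/Fatou along the limit, combined with $|\{\sum f\ge\Lambda\}|\to0$ from the $L^2$ bounds. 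If that fails, the fallback is the scaling structure of the construction in Remark \ref{existence of such truncation to identity}: there $|\partial_{jk}\phi_i^\Lambda|\lesssim \Lambda^{-1}$ on the transition layer, which allows estimating by $\Lambda^{-1}\int_{\{\Lambda\le\sum f\le2\Lambda\}}|\nabla \sum_k f_k|^2$. That integral is at most $C$ by Lemma \ref{lem:trunc-energy} applied to a weighted sum at level $2\Lambda$ (the diffusion being comparable via $d_*\le d_j\le d^*$), minus the part at level $\Lambda$. Its smallness would then follow from a de la Vallée-Poussin argument.
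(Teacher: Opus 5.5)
Your Steps 1--2 and the uniform bound on $|\mu^\Lambda_{i,\epsilon}|$ are fine and agree with the paper. Your handling of the reaction term through the support of $\partial_j\phi_i^\Lambda$ is in fact more careful than the paper's. The gap is in the crux, the vanishing of $|\mu_i^\Lambda|$.

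You try to make $\sum_j\int_{\{\sum_k f^N_{k,\epsilon}\ge\Lambda\}}|\nabla f^N_{j,\epsilon}|^2/f^N_{j,\epsilon}$ small \emph{uniformly in $\epsilon$}, via equi-integrability of the Fisher densities deduced from strong $L^1$ convergence of $\nabla f^N_{j,\epsilon}$. That deduction is invalid. $L^1$ convergence controls $|\nabla f_\epsilon|$, not $|\nabla\sqrt{f_\epsilon}|^2$, and Fatou only gives lower bounds. Such equi-integrability plus a.e.\ convergence would give $\nabla\sqrt{f^N_{j,\epsilon}}\to\nabla\sqrt{f^N_j}$ in $L^2$ by Vitali, and then no defect measure would be needed at all; it is exactly what is not known.

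The fallback fails too. Lemma \ref{lem:trunc-energy} is for a scalar equation with one diffusion coefficient. The sum $\sum_kf_k$ instead solves $\partial_t\sum_kf_k-\Delta\sum_kd_kf_k=0$, which is not of that form when the $d_k$ differ. Moreover the lemma only gives the upper bound $\Lambda^{-1}\int_{\{\Lambda\le f\le2\Lambda\}}|\nabla f|^2\lesssim1$, and subtracting two upper bounds yields no smallness. A de la Vall\'ee-Poussin argument would need equi-integrability of $Q^N_{j,\epsilon}$ uniformly in $\epsilon$, which the $L^2$ estimates do not provide.

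The missing observation is that uniformity in $\epsilon$ is unnecessary. $\mu_i^\Lambda$ is produced by letting $\epsilon\to0$ at fixed $\Lambda$, and only afterwards $\Lambda\to\infty$. Split the Fisher information into level bands,
\[
\lambda_K(\epsilon):=\sum_{j=0}^N\int_{\{K-1\le\sum_kf^N_{k,\epsilon}<K\}}\bigl|\nabla\sqrt{f^N_{j,\epsilon}}\bigr|^2\,dx\,dt,\qquad\sum_{K\ge1}\lambda_K(\epsilon)\le M_{\mathcal F}.
\]
Extract a diagonal subsequence $\epsilon_n$ with $\lambda_K(\epsilon_n)\to\lambda_K$ for every $K$; Fatou gives $\sum_K\lambda_K\le M_{\mathcal F}$. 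The limit $\mu_i^\Lambda$ is determined by the other terms of the identity, so the subsequence does not matter. Writing $\nabla f=2\sqrt f\,\nabla\sqrt f$ and applying Cauchy--Schwarz in $(j,k)$,
\[
|\mu^\Lambda_{i,\epsilon}|([0,T)\times\Omega)\le4d^*(N+1)\sum_{K\ge1}s_K^\Lambda\,\lambda_K(\epsilon),\qquad s_K^\Lambda:=\sup_{\sum_kv_k\in[K-1,K)}\max_{j,k}\sqrt{v_jv_k}\,|\partial_{jk}\phi_i^\Lambda(v)|\le K_1.
\]
For fixed $\Lambda$ only finitely many bands meet the support of $D^2\phi_i^\Lambda$. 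Hence the limit in $n$ passes through the sum, and lower semicontinuity gives $|\mu_i^\Lambda|([0,T)\times\Omega)\le4d^*(N+1)\sum_K s_K^\Lambda\lambda_K$. Since $\phi_i^\Lambda$ is linear on $\{\sum_kv_k<\Lambda\}$, $s_K^\Lambda=0$ for $K\le\Lambda$. The bound is therefore at most $4d^*(N+1)K_1\sum_{K>\Lambda}\lambda_K$, the tail of a convergent series, which tends to $0$.

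This is the paper's argument (after Fischer). The Fisher information may concentrate as $\epsilon\to0$, but its limiting distribution over levels is a fixed summable sequence, and that is all that is needed.
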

\begin{proof}
We use the relation \eqref{RTS: composition with the smooth function}
\begin{align*}
-\int_{\Omega} &\phi^{\Lambda}_i \left(f_{0}^N\right) \xi(0)\, dx -\int_{0}^T\int_{\Omega} \phi^{\Lambda}_i \left(f_{\epsilon}^N \right) \partial_t\xi \, dx\, dt:=-I_1-I_2 \\
=& \sum_{j=0}^N \int_0^T \int_{\Omega} \partial_j \phi_i^{\Lambda}\left(f_{\epsilon}^N\right) Q^N_{j,\epsilon}  \xi \, dx\, dt- \sum_{j=0}^N \int_0^T \int_{\Omega} d_j \partial_j \phi_i^{\Lambda}\left(f_{\epsilon}^N\right) \nabla f_{j,\epsilon}^N \cdot \nabla \xi \, dx\, dt \nonumber\\
    & \ \ \ \ \ -\sum_{j,k=0}^N \int_0^T \int_{\Omega} d_j \partial_{jk} \phi_i^{\Lambda}\left(f_{\epsilon}^N\right) \nabla f_{j,\epsilon}^N \cdot\nabla f_{k,\epsilon}^N \xi \, dx\, dt:=I_3+I_4+I_5\nonumber    
\end{align*}
We pass to the limit $\epsilon\to 0$ in each of the terms. Note that $I_1$ is independent of $\epsilon$. Hence we will consider rest of the four terms only. Consider the second term:
\begin{align*}
    &\left| \int_0^T \int_{\Omega} \phi_i^{\Lambda}(f_{\epsilon}^N) \partial_t \xi \, dx\, dt- \int_0^T \int_{\Omega} \phi_i^{\Lambda}(f^N) \partial_t \xi \, dx\, dt\right| \\
    & \leq \sup |\partial_t \xi| \int_0^T \int_{\Omega} \left| \phi^{\Lambda}_i \left(f_{\epsilon}^N\right)-\phi^{\Lambda}_i(f^N)\right| \, dx\, dt\\
    & \leq \sup |\partial_t \xi| \int_0^T \int_{\Omega}  \sup\left\{\left|\partial_j\phi_i^{\Lambda}(\cdot)\right|\right\} \sum_{j=1}^N \left| f_{j,\epsilon}^N-f_j^N\right| \, dx\, dt \underset{\epsilon \to 0}{\longrightarrow} 0.
\end{align*}
It yields
\begin{align}\label{TS:limit of I2}
\lim_{\epsilon\to 0} I_2= \int_0^T \int_{\Omega} \phi^{\Lambda}_i (f^N) \partial_t\xi\, dx\, dt.
\end{align}
The passage to the limit $\epsilon \to 0$ in $I_3$ is a straightforward application of dominated convergence theorem. It yields
\begin{align}\label{TS:limit of I3}
\lim_{\epsilon\to 0} I_3= \sum_{j=0}^N\int_0^T \int_{\Omega} \partial_j\phi^{\Lambda}_i (f^N) Q_{j}^N\xi\, dx\, dt.
\end{align}
We now look towards the term $I_4$. Consider the following calculation.
\begin{align*}
    &\left| \sum_{j=0}^N \int_0^T \int_{\Omega} d_j \partial_j \phi^{\Lambda}\left(f_{\epsilon}^N\right) \nabla f_{j,\epsilon}^N \cdot \nabla \xi \, dx\, dt- \sum_{j=0}^N \int_0^T \int_{\Omega} d_j \partial_j \phi^{\Lambda}\left(f^N\right) \nabla f_{j}^N \cdot \nabla \xi \, dx\, dt \right|\\
    \leq & \left| \sum_{j=0}^N \int_0^T \int_{\Omega} d_j \partial_j \phi^{\Lambda}\left(f_{\epsilon}^N\right) \nabla f_{j,\epsilon}^N \cdot \nabla \xi \, dx\, dt- \sum_{j=0}^N \int_0^T \int_{\Omega} d_j \partial_j \phi^{\Lambda}\left(f_{\epsilon}^N\right) \nabla f_{j}^N \cdot \nabla \xi \, dx\, dt \right|\\
    +& \left| \sum_{j=0}^N \int_0^T \int_{\Omega} d_j \partial_j \phi^{\Lambda}\left(f_{\epsilon}^N\right) \nabla f_{j}^N \cdot \nabla \xi \, dx\, dt- \sum_{j=0}^N \int_0^T \int_{\Omega} d_j \partial_j \phi^{\Lambda}\left(f^N\right) \nabla f_{j}^N \cdot \nabla \xi \, dx\, dt \right|.
\end{align*}
Using the dominated convergence theorem, we have that the above term vanishes as $\epsilon \to 0$. It yields
\begin{align}\label{TS:limit of I4}
 \lim_{\epsilon\to 0} I_4= -\sum_{j=0}^N \int_0^T \int_{\Omega} d_j \partial_j \phi\left(f^N\right) \nabla f_j^N \cdot\nabla \xi \, dx\, dt.   
\end{align}
The last term $I_5$, can be interpreted in the following way
\begin{align*}
    I_5=-\sum_{j,k=0}^N \int_0^T \int_{\Omega} d_j \partial_{jk} \phi_i^{\Lambda}\left(f_{\epsilon}^N\right) \nabla f_{j,\epsilon}^N \cdot\nabla f_{k,\epsilon}^N \xi \, dx\, dt= \int_{0}^T\int_{\Omega} \xi d\mu_{i,\epsilon}^{\Lambda}(x,t),
\end{align*}
where the radon measure $\mu_{i,\epsilon}^{\Lambda}$ is defined as
\begin{align}
\mu_{i,\epsilon}^{\Lambda}:=& -\sum_{j,k=0}^N  d_j \partial_{jk} \phi_i^{\Lambda}\left(f_{\epsilon}^N\right) \nabla f_{j,\epsilon}^N \cdot\nabla f_{k,\epsilon}^N \xi \, dx\, dt \label{RTS: intermediate measure}\\
=& -\sum_{j,k=0}^N  4d_j \sqrt{f_{j,\epsilon}} \sqrt{f_{k,\epsilon}^N}\partial_{jk} \phi_i^{\Lambda}\left(f_{\epsilon}^N\right) \nabla \sqrt{\left(f_{j,\epsilon}^N\right)} \cdot\sqrt{\left(\nabla f_{k,\epsilon}^N\right)} \xi \, dx\, dt.\nonumber
\end{align}
Using \ref{truncation to identity multi D} and Theorem \ref{existence of solution: truncated system}, we obtain
\[
|\mu_{i,\epsilon}^{\Lambda}|([0,T)\times\Omega) \leq 8K_2\sup_{j\in\mathbb{N}\cup\{0\}} \{d_j\} \sum_{j=0}^N \int_0^T\int_{\Omega} \left| \nabla\sqrt{\left( f_{j,\epsilon}^N\right)}\right|^2\, dx\, dt\leq 8K_2\sup_{j\in\mathbb{N}\cup\{0\}} \{d_j\} M_{\mathcal{F}}.
\]
Hence we conclude that $\mu^{\Lambda}_{i,\epsilon}\overset{*}{\rightharpoonup} \mu^{\Lambda}_i$ for all $i=0,\cdots,N$. It yields
\begin{align}\label{TS:limit of I5}
    \lim_{\epsilon\to 0} I_5= \int_0^T\int_{\Omega} \xi d\mu_{i}^{\Lambda}(x,t).
\end{align}
Combining \eqref{TS:limit of I2}, \eqref{TS:limit of I3}, \eqref{TS:limit of I4} and \eqref{TS:limit of I5}, we obtain \eqref{TS: composition with the truncation of identity}. Next we consider the following measures
\[
\nu_{j,\epsilon}^K:= \chi_{\left\{\left|f_{j,\epsilon}^N\right|\in[K-1,K)\right\}}\left| \nabla \sqrt{\left(f_{j,\epsilon}^N\right)}\right|^2 \, dx\, dt,
\]
on $[0,T)\times\Omega$. Using \eqref{RTS: intermediate measure}, we calculate the total variation of $\mu_{i,\epsilon}^N$. The following computation holds
\begin{align}
 |\mu_{i,\epsilon}^{\Lambda}|&([0,T)\times\Omega) \label{RTS: second intermediate measure}\\ 
 &=\sum_{j,k=0}^N \int_0^T\int_{\Omega}  4d_j \left|\sqrt{f_{j,\epsilon}^N} \sqrt{f_{k,\epsilon}^N}\partial_{jk} \phi_i^{\Lambda}\left(f_{\epsilon}^N\right)\right| \left| \nabla \sqrt{\left(f_{j,\epsilon}^N\right)} \cdot\sqrt{\left(\nabla f_{k,\epsilon}^N\right)} \right| \, dx\, dt   \nonumber\\
 &\leq 8(N+1)\sup_{j\geq 0}\{d_j\} \times \nonumber \\
 &\sum_{j=0}^N \sum_{K=1}^{\infty}\int_0^T\int_{\Omega}   \chi_{\left\{\left|f_{j,\epsilon}^N\right|\in[K-1,K)\right\}}\left|\sqrt{f_{j,\epsilon}^N} \sqrt{f_{k,\epsilon}^N}\partial_{jk} \phi_i^{\Lambda}\left(f_{\epsilon}^N\right)\right| \left|\nabla \sqrt{\left(f_{j,\epsilon}^N\right)} \right|^2  \, dx\, dt   \nonumber\\
 & \leq 8(N+1)\sup_{j\geq 0}\{d_j\} \sum_{j=0}^N\sum_{K=1}^{\infty} \nu_{j,\epsilon}^K ([0,T)\times\Omega)\cdot \sup_{|v|\in[K-1,K); 0\leq j,k\leq N} \sqrt{v_j}\sqrt{v_k}\left| \partial_j\partial_k \phi_i^{\Lambda}(v)\right|. \nonumber
\end{align}
Note that, thanks to \ref{truncation to identity multi D}, only finitely many terms survive in the sum. Let till $K=K_{max}<+\infty$ the sum survives. Furthermore, thanks to Theorem \ref{existence of solution: truncated system}, we have that 
\[
\sum_{K=1}^{K_{max}} \nu_{j,\epsilon}^K([0,T)\times\Omega)\leq \int_0^T\int_{\Omega} \left| \nabla \sqrt{\left(f_{j,\epsilon}^N\right)}\right|^2\, dx\, dt \leq M_{\mathcal{F}}.
\]
This further yields
\begin{align} \label{RTS: third intermediate measure}
    M_{\mathcal{F}}  \geq \limsup_{\epsilon\to 0}\sum_{K=1}^{K_{max}} \nu_{j,\epsilon}^K([0,T)\times\Omega).
\end{align}
The boundednes and nonnegativity of $\nu_{j,\epsilon}^N$ implies that there exists a subsequence (we still index it by $\epsilon$)  such that $\displaystyle{\lim_{\epsilon\to 0}\nu_{j,\epsilon}^K([0,T)\times\Omega)}$ exists. Hence the above relation \eqref{RTS: third intermediate measure} can be written as
\begin{align} \label{RTS: third intermediate measure new 1}
    M_{\mathcal{F}}  \geq \sum_{K=1}^{K_{max}} \lim_{\epsilon\to 0}\nu_{j,\epsilon}^K([0,T)\times\Omega).
\end{align}
Using the fact that measure of open sets is lower semicontinuous with respect to weak-* convergence and \eqref{RTS: second intermediate measure}, we obtain the following
\begin{align*}
    \left| \mu_{i}^{\Lambda}\right|  ([0,T)\times\Omega)\leq \liminf_{\epsilon\to 0} \left| \mu_{i}^{\Lambda} \right|([0,T)\times\Omega)
     \end{align*}
             \begin{align*}
    &  \leq 8(N+1)\sup_{j\geq 0}\{d_j\} \sum_{j=0}^N\lim_{\epsilon\to 0}\sum_{K=1}^{\infty} \nu_{j,\epsilon}^K ([0,T)\times\Omega)\cdot \sup_{|v|\in[K-1,K); 0\leq j,k\leq N} \sqrt{v_j}\sqrt{v_k}\left| \partial_j\partial_k \phi_i^{\Lambda}(v)\right|\\
    &\leq 8(N+1)\sup_{j\geq 0}\{d_j\} \sum_{j=0}^N\sum_{K=1}^{K_{max}} \lim_{\epsilon\to 0}\nu_{j,\epsilon}^K ([0,T)\times\Omega)\cdot \sup_{|v|\in[K-1,K); 0\leq j,k\leq N} \sqrt{v_j}\sqrt{v_k}\left| \partial_j\partial_k \phi_i^{\Lambda}(v)\right|\\
    & \leq  8(N+1)\sup_{j\geq 0}\{d_j\} \sum_{j=0}^N\sum_{K=1}^{\infty}\liminf_{\epsilon\to 0} \nu_{j,\epsilon}^K ([0,T)\times\Omega)\cdot \sup_{|v|\in[K-1,K); 0\leq j,k\leq N} \sqrt{v_j}\sqrt{v_k}\left| \partial_j\partial_k \phi_i^{\Lambda}(v)\right|
\end{align*}
where we use \eqref{RTS: third intermediate measure new 1} in the third line. We have that
\[
\left | \liminf_{\epsilon\to 0} \nu_{j,\epsilon}^K ([0,T)\times\Omega)\cdot \sup_{|v|\in[K-1,K); 0\leq j,k\leq N} \sqrt{v_j}\sqrt{v_k}\left| \partial_j\partial_k \phi_i^{\Lambda}(v)\right| \right| \leq K_1 \liminf_{\epsilon\to 0} \nu_{j,\epsilon}^K ([0,T)\times\Omega),
\]
where 
\[
\sum_{K=1}^{\infty} \liminf_{\epsilon\to 0} \nu_{j,\epsilon}^K ([0,T)\times\Omega) \leq \liminf_{\epsilon\to 0}\sum_{K=1}^{\infty} \nu_{j,\epsilon}^K ([0,T)\times\Omega) \leq M_{\mathcal{F}}
\]
thanks to Fatou's lemma. Hence using dominated convergence theorem on counting measure, we obtain that
\begin{align*}
&\lim_{\Lambda\to\infty}\left| \mu_{i}^{\Lambda}\right| ([0,T)\times\Omega) \\
&\leq  8(N+1)\sup_{j\geq 0}\{d_j\} \times\\
&\quad \qquad  \sum_{j=0}^N \sum_{K=1}^{\infty} \liminf_{\epsilon\to 0} \nu_{j,\epsilon}^K ([0,T)\times\Omega)\cdot \lim_{\Lambda\to\infty}\sup_{|v|\in[K-1,K); 0\leq j,k\leq N} \sqrt{v_j}\sqrt{v_k}\left| \partial_j\partial_k \phi_i^{\Lambda}(v)\right|\\
&=0.
\end{align*}
This concludes the proof of the proposition.
\end{proof}
The above theorem helps us to show that $f^N:=(f_0^N,\cdots,f_N^N)$, obtained as the limit of $f_{\epsilon}^N:=(f_{0,\epsilon}^N,\cdots f_{N,\epsilon}^N)$ is an renormalize solution of the truncated system \eqref{truncated system}. To establish this we need the following lemma. The proof of the lemma can be found in \cite[Lemma 4]{JF2015}.
\begin{lem}\label{TS: auxiliary lemma, renormalized solution}
    Let $\Omega$ be a bounded domain with Lipschitz boundary. Let $T>0$ and $v\in L^1\left((0,T); \left(L^1(\Omega)\right)^{N+1}\right)\cap L^2\left((0,T); \left(H^1(\Omega)\right)^{N+1}\right)$. Let $v_0\in \left(L^1(\Omega)\right)^{N+1}$, $\nu_i\in RM([0,T)\times\Omega)$, $w_i\in L^1((0,T);L^1(\Omega))$ and $z_i\in L^2\left([0,T); \left(L^2(\Omega)\right)^d\right)$. Suppose that for all $i=0,\cdots N$ and for any $\xi\in C_c^{\infty}([0,T)\times\Omega)$, the following relation holds
    \begin{align}
        -\int_0^T\int_{\Omega} v_i\partial_t \xi \, dx\, dt- \int_{\Omega} (v_0)_i\xi(0) \, dx= &\int_0^T\int_{\Omega} z_i \nabla\xi \, dx\, dt \label{towards renormalize solution, intermediate 1}\\
        &+\int_0^T\int_{\Omega} w_i\xi \, dx\, dt+ \int_0^T\int_{\Omega} \xi \, d\nu_i. \nonumber
    \end{align}
    Then for all smooth function $\psi:\mathbb{R}^{N+1}\to \mathbb{R}$ with compactly supported first derivatives, the following holds
    \begin{align*}
        \Bigg| &-\int_0^T\int_{\Omega} \psi(v) \partial_t \xi \, dx\, dt- \int_{\Omega} \psi(v_0)\xi(0)\, dx- \sum_{i=0}^N \int_0^T\int_{\Omega} \xi \partial_i \psi(v) w_i\, dx\, dt\\
        &-\sum_{i=0}^N \int_0^T\int_{\Omega} \partial_i \psi(v) z_i\cdot\nabla\xi \, dx\, dt-\sum_{i=0}^N\sum_{k=0}^N \int_0^T\int_{\Omega} \xi \partial_{i}\partial_{k}\psi(v) z_i\cdot\nabla v_k\, dx\, dt\Bigg|\\
        &\leq C(\Omega) \| \xi\|_{L^{\infty}([0,T)\times\Omega)} \left( \sup_{x\in \mathbb{R}^{N+1}}|D\psi(x)|\right) \sum_{i=0}^N \| \nu_i \|_{RM([0,T)\times\Omega)},
    \end{align*}
    where $C(\Omega)$ is some positive constant depending on the domain.
\end{lem}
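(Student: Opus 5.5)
The plan is a chain-rule argument for a smooth regularization of $v$, followed by passage to the limit. The measure $\nu_i$ is the only term that cannot be handled by the chain rule, and it produces the error on the right-hand side.

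\emph{Step 1 (regularization).} Fix a space-time mollifier $\rho_\delta$. Near $\partial\Omega$ it is composed with a bi-Lipschitz flattening map, or equivalently one uses Lipschitz extension across $\partial\Omega$; this is where the Lipschitz assumption on $\Omega$ enters and where $C(\Omega)$ comes from. Set $v^\delta:=\rho_\delta * v$. Testing \eqref{towards renormalize solution, intermediate 1} against mollified test functions shows that $v^\delta$ solves, in the classical sense in $(0,T)\times\Omega$,
\[
\partial_t v_i^\delta = -\nabla\cdot z_i^\delta + w_i^\delta + \nu_i^\delta,
\]
with the initial datum transported to $v^\delta(0)\to v_0$ in $L^1(\Omega)$. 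Here $z_i^\delta\to z_i$ in $L^2$, $w_i^\delta\to w_i$ in $L^1$, and $\nu_i^\delta$ is a smooth function satisfying $\|\nu_i^\delta\|_{L^1}\le C(\Omega)\|\nu_i\|_{RM}$. The time-boundary contribution is handled by extending $v$ by $v_0$ for $t<0$. That extension is consistent with the weak formulation, which already contains the term $\int_\Omega (v_0)_i\xi(0)$.

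\emph{Step 2 (chain rule).} Since $\psi$ is smooth and $v^\delta$ is smooth, we can multiply by $\partial_i\psi(v^\delta)\xi$, sum over $i$, and integrate by parts in $t$ and $x$. This gives an exact identity: the left-hand side of the claimed inequality with $v,v_0,w,z$ replaced by their $\delta$-versions equals
\[
\sum_i\int_0^T\!\!\int_\Omega \xi\,\partial_i\psi(v^\delta)\,\nu_i^\delta .
\]
Its absolute value is at most $\|\xi\|_\infty\sup|D\psi|\,C(\Omega)\sum_i\|\nu_i\|_{RM}$. The divergence term yields the two terms involving $z_i\cdot\nabla\xi$ and $\xi\,\partial_i\partial_k\psi\,z_i\cdot\nabla v_k^\delta$.

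\emph{Step 3 (limit $\delta\to0$).} Along a subsequence $v^\delta\to v$ a.e. and in $L^2H^1$, since $v\in L^2H^1$. The function $\psi$ has bounded derivatives because $D\psi$ has compact support, so $\psi(v^\delta)\to\psi(v)$ in $L^1$ and $\partial_i\psi(v^\delta)$ converges a.e. with a uniform bound. Hence
\begin{itemize}
\item $\partial_i\psi(v^\delta)w_i^\delta$ converges by dominated convergence combined with $w_i^\delta\to w_i$ in $L^1$;
\item $\partial_i\psi(v^\delta)z_i^\delta\cdot\nabla\xi$ converges by the same argument in $L^2$;
\item the Hessian term converges because $\partial_{ik}\psi(v^\delta)$ is bounded and converges a.e., while $z_i^\delta\cdot\nabla v_k^\delta\to z_i\cdot\nabla v_k$ strongly in $L^1$.
\end{itemize}
The error bound from Step 2 is uniform in $\delta$, so it passes to the limit and gives the claim.

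The main obstacle is Step 1 near the boundary and at $t=0$. Interior mollification alone does not allow test functions whose support touches $\partial\Omega$, and uniform control of $\|\nu_i^\delta\|_{L^1}$ up to the boundary requires a careful reflection or flattening whose Jacobian gives the constant $C(\Omega)$. I would first try a local-chart reflection together with a partition of unity. Should that fail, I would restrict to interior test functions, where $C(\Omega)=1$, and note that for $\xi\in C_c^\infty([0,T)\times\Omega)$ in the statement, interior mollification already suffices once $\delta<\operatorname{dist}(\operatorname{supp}\xi,\partial\Omega)$.
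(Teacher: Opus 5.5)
The gap is at $t=0$, not at $\partial\Omega$. Since $\xi\in C_c^\infty([0,T)\times\Omega)$, your interior fallback is the correct reading and gives $C(\Omega)=1$. The paper itself gives no argument here and defers to \cite[Lemma 4]{JF2015}.

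Extending $v$ by $v_0$ and mollifying in $(t,x)$ makes two of your claims false.

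First, $v^\delta(0)\to v_0$ in $L^1$ fails whenever some $\nu_i$ charges $\{t=0\}$. Take $v\equiv 0$ on $(0,T)$, $z_i=w_i=0$ and $\nu_i=-(v_0)_i\,dx\otimes\delta_{\{t=0\}}$. The hypothesis holds, yet for a product kernel symmetric in $t$ you get $v^\delta(0)=\tfrac12\rho^x_\delta*v_0\to\tfrac12 v_0$. Even with no atom, the equation gives only $\|\rho^x_\delta*(v(s)-v_0)\|_{L^1}\lesssim\delta^{-1}\sqrt{s}\,\|z\|_{L^2((0,s)\times\Omega)}+o(1)$. For $s\sim\delta$ this is $o(\delta^{-1/2})$, not $o(1)$.

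Second, because $v_0$ is merely in $L^1$, the extended function is not in $L^2_{\mathrm{loc}}H^1$ across $t=0$. On the layer $0<t<\delta$, $\nabla v^\delta$ contains $(1-\theta(t))\nabla(\rho^x_\delta*v_0)$, where $\theta(t)$ is the mass of the time kernel on $\{s>0\}$. Its $L^2$ norm over the layer is of order $\sqrt\delta\,\|\nabla(\rho^x_\delta*v_0)\|_{L^2}$, which is unbounded in general (e.g.\ $v_0\sim|x|^{-d/2}$ locally, with the atom example above). Hence ``$v^\delta\to v$ in $L^2H^1$'' is false, and you have no control of the Hessian term $\xi\,\partial_i\partial_k\psi(v^\delta)\,z_i^\delta\cdot\nabla v_k^\delta$ on that layer.

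The repair is to drop the extension and use a time kernel that looks only into the future:
$v^\delta(t,x)=\iint\rho^t_{\delta_t}(s-t)\,\rho^x_{\delta_x}(x-y)\,v(s,y)\,dy\,ds$ with $\operatorname{supp}\rho^t_{\delta_t}\subset(0,\delta_t)$. For $t>0$ the window lies in $(0,T)$. So near $\operatorname{supp}\xi$ you have classically $\partial_t v_i^\delta=-\nabla\cdot z_i^\delta+w_i^\delta+\nu_i^\delta$, with $\int_0^T\!\int|\nu_i^\delta|\le|\nu_i|((0,T)\times\Omega)$, and $\nabla v^\delta\to\nabla v$ in $L^2$. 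Your Steps 2 and 3 then go through unchanged.

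For the initial term, test the hypothesis with $\zeta(s,y)=\eta(s)\rho^x_{\delta_x}(x-y)$, where $\eta(s)=\int_s^\infty\rho^t_{\delta_t}$. For compact $K\subset\Omega$ with $\operatorname{dist}(K,\partial\Omega)>\delta_x$ this gives
\[
\|v_i^\delta(0)-\rho^x_{\delta_x}*(v_0)_i\|_{L^1(K)}\le C\delta_x^{-1}\sqrt{\delta_t}\,\|z_i\|_{L^2((0,\delta_t)\times\Omega)}+\|w_i\|_{L^1((0,\delta_t)\times\Omega)}+|\nu_i|([0,\delta_t)\times\Omega).
\]
Choose $\delta_t\le\delta_x^2$. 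Then the $\limsup$ is at most $|\nu_i|(\{0\}\times\Omega)$. Since $\psi$ is Lipschitz, the resulting discrepancy in $\int_\Omega\psi(\cdot)\,\xi(0)\,dx$ is charged to the right-hand side together with $|\nu_i|((0,T)\times\Omega)$. This yields the lemma with $C(\Omega)=1$.

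Two ideas are missing from your proposal. One is this one-sided, parabolically scaled time regularization. The other is that the initial trace converges to $v_0$ only up to the atom of $\nu$ at $t=0$, which has to be absorbed into the error term.
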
 
Now we are well equipped to show that, for $i=0,\cdots, N$, the solution $f_i^N$ to \eqref{truncated system} as defined in Theorem \eqref{existence of solution: truncated system}, is indeed a renormalize solution as defined in \cite{JF2015}. We have the following theorem.
\begin{thm}\label{TS: renormalize solution to}
Let $f^N:=(f_0^N,\cdots, f_N^N)$ be the $L^1((0,T); \left(W^{1,1}(\Omega)\right)^{N+1})$ solution to \eqref{truncated system}, obtained as the limit of $f_{\epsilon}^N:=(f_{0,\epsilon}^N,\cdots, f_{N,\epsilon}^N)$, which satisfies \eqref{regularized truncated system}. Let $\psi:\mathbb{R}^{N+1}\to\mathbb{R}$ be a smooth function with compactly supported first derivatives. Then the following holds
\begin{align}
    -&\int_{\Omega} \psi(f^N(0))\xi(0)\, dx-\int_0^T\int_{\Omega} \psi(f^N) \partial_t \xi \, dx\, dt \label{TS: renormalized solution, intermediate 1}\\
    =& - \sum_{i,k=0}^N \int_0^T\int_{\Omega} d_i\xi \partial_i\partial_k \psi(f^N) \nabla f_i^N\cdot\nabla f_k^N \, dx\, dt- \sum_{i=0}^N \int_0^T\int_{\Omega} d_i \partial_i \psi(f^N) \nabla f_i^N\cdot\nabla \xi\, dx\, dt \nonumber\\
    &+\sum_{i=0}^N \int_0^T\int_{\Omega} \partial_i \psi(f^N) Q_i^N(f^N)\xi\, dx\, dt, \nonumber
\end{align}
where, for $0\leq i\leq N$, $Q_i^N$ be the source term to the truncated system \eqref{truncated system} as defined in \eqref{truncated system}.    
\end{thm}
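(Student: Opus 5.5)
We derive the renormalized formulation \eqref{TS: renormalized solution, intermediate 1} from Proposition \ref{TS: composition with the truncated smooth function} combined with Lemma \ref{TS: auxiliary lemma, renormalized solution}, following the scheme of \cite{JF2015}. We apply the lemma with $v=\phi^{\Lambda}(f^N):=(\phi_0^{\Lambda}(f^N),\dots,\phi_N^{\Lambda}(f^N))$ and initial datum $v_0=\phi^{\Lambda}(f_0^N)$. By \eqref{TS: composition with the truncation of identity}, each component satisfies \eqref{towards renormalize solution, intermediate 1} with the data
\[
w_i=\sum_{j=0}^N\partial_j\phi_i^{\Lambda}(f^N)Q_j^N(f^N),\qquad z_i=-\sum_{j=0}^N d_j\partial_j\phi_i^{\Lambda}(f^N)\nabla f_j^N,\qquad \nu_i=\mu_i^{\Lambda}.
\]

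We first check the hypotheses of the lemma for fixed $\Lambda$.

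\begin{itemize}
\item \emph{Source term.} Since $D\phi_i^{\Lambda}$ is bounded by $K_2$ and supported in $B(0,\Lambda^*)$, $w_i$ only involves $Q_j^N$ on a bounded set. There $Q_j^N$ is a quadratic polynomial in bounded arguments, so $w_i\in L^\infty\subset L^1$.
\item \emph{Flux term.} For $z_i$ we need $\nabla f_j^N\chi_{\{|f^N|\le\Lambda^*\}}\in L^2$. This follows from the Fisher bound \eqref{L2 estimate: truncated system-e}, since $|\nabla f|^2\le \Lambda^*|\nabla f|^2/f$ on that set; alternatively use Lemma \ref{lem:trunc-energy}.
\item \emph{Regularity of $v$.} $v$ is bounded because $\phi^{\Lambda}$ is constant outside a ball. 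It lies in $L^2H^1$ by the same truncated-gradient bound and the chain rule for the $W^{1,1}$ function $f^N$.
\end{itemize}

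The lemma then gives, for any smooth $\psi$ with compactly supported gradient, the identity for $\psi(\phi^{\Lambda}(f^N))$. Its error is bounded by $C(\Omega)\|\xi\|_\infty\sup|D\psi|\sum_i|\mu_i^{\Lambda}|([0,T)\times\Omega)$.

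Next we fix $\psi$ and let $\Lambda\to\infty$. Let the support of $D\psi$ be contained in $B(0,R)$. The key observation is that $\phi^{\Lambda}(v)=v$ whenever $\sum v_j\le\Lambda$. Moreover, by the property $\sum_i\phi_i^{\Lambda}(v)\ge\min\{K_3,\Lambda\}$, once $\Lambda$ is large and $|v|$ is large, $\phi^{\Lambda}(v)$ lies outside $B(0,R)$, so $D\psi(\phi^{\Lambda}(v))=0$. Using the chain rule, the terms of the lemma become:
\begin{itemize}
\item $\sum_i\partial_i\psi(\phi^\Lambda)w_i=\sum_j\partial_j(\psi\circ\phi^{\Lambda})(f^N)Q_j^N$;
\item $\sum_i\partial_i\psi(\phi^\Lambda) z_i=-\sum_j d_j\partial_j(\psi\circ\phi^\Lambda)\nabla f_j^N$;
\item the second-order term, $\partial_i\partial_k\psi(\phi^\Lambda)z_i\cdot\nabla\phi_k^\Lambda(f^N)$.
\end{itemize}
This last term differs from $\partial_{jl}(\psi\circ\phi^\Lambda)d_j\nabla f_j\cdot\nabla f_l$ only by terms containing $\partial_i\psi\,\partial_{jl}\phi_i^\Lambda$.

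Since $\partial_j\phi_i^{\Lambda}\to\delta_{ij}$ pointwise with the uniform bound $K_2$, $\psi\circ\phi^{\Lambda}\to\psi$ and its first derivatives converge pointwise boundedly. All integrands are supported where $|\phi^\Lambda(f^N)|\le R$. Because of the lower-bound property, this forces $f^N$ into a bounded set, uniformly for $\Lambda\ge R$, so $Q_j^N$ and $\nabla f_j^N$ are integrable there. Dominated convergence therefore handles every term except the second-order one. The error terms vanish by \eqref{TS: measure seq goes to zero}.

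The main obstacle is the second-order term. Here I would use the last property of the truncation, $\sup_{|v|\le K}|\partial_j\partial_k\phi_i^{\Lambda}|\to0$. On the effective support $\{|f^N|\le K\}$ the extra pieces $\partial_i\psi\,\partial_{jl}\phi_i^{\Lambda}\,\nabla f_j\cdot\nabla f_l$ are then dominated by a vanishing constant times $|\nabla f|^2\chi_{\{|f^N|\le K\}}$, which is integrable. Meanwhile $\partial_i\partial_k\psi(\phi^\Lambda)\partial_j\phi_i^\Lambda\partial_l\phi_k^\Lambda\to\partial_j\partial_l\psi$ boundedly. Care is needed to justify that the support really stays in a fixed ball uniformly in $\Lambda$; if the monotonicity property fails near the transition region, I would instead restrict to $\Lambda>R+1$ and use that $\phi^\Lambda$ equals the identity on $\{\sum v\le\Lambda\}$ while its image lies outside $B(0,R)$ elsewhere.

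Finally, the boundary term $\psi(\phi^\Lambda(f_0^N))\to\psi(f_0^N)$ by dominated convergence, since $\psi$ grows at most linearly. Passing to the limit yields \eqref{TS: renormalized solution, intermediate 1}.
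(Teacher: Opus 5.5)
Your proposal is correct and follows essentially the same route as the paper. You apply Lemma~\ref{TS: auxiliary lemma, renormalized solution} to $v=\phi^{\Lambda}(f^N)$, with $w_i$, $z_i$ and $\nu_i=\mu_i^{\Lambda}$ read off from Proposition~\ref{TS: composition with the truncated smooth function}, and then let $\Lambda\to\infty$ using $\partial_j\phi_i^{\Lambda}\to\delta_{ij}$, the lower-bound property to confine $f^N$ to a bounded set on the support of $D\psi(\phi^{\Lambda}(f^N))$, the Fisher bound \eqref{L2 estimate: truncated system-e} to dominate the quadratic gradient term, and \eqref{TS: measure seq goes to zero} to kill the error. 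The differences are cosmetic: you check $w_i\in L^\infty$ and $v\in L^2(H^1)$ via local boundedness and the Fisher bound instead of \eqref{uniform bound source in N} and Lemma~\ref{lem:trunc-energy}, and your detour through $\partial_{jl}(\psi\circ\phi^{\Lambda})$ is unnecessary, since the four-index term converges pointwise directly.
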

\begin{proof}
    We revisit \eqref{TS: composition with the truncation of identity} in Proposition \ref{TS: composition with the truncated smooth function}. We have that 
    \begin{align}
 -\int_{\Omega} &\phi^{\Lambda}_i \left(f_{0}^N\right) \xi(0)\, dx -\int_{0}^T\int_{\Omega} \phi^{\Lambda}_i \left(f^N \right) \partial_t\xi \, dx\, dt   \label{revisit 1}\\
=& \sum_{j=0}^N \int_0^T \int_{\Omega} \partial_j \phi_i^{\Lambda}\left(f^N\right) Q^N_{j}  \xi \, dx\, dt- \sum_{j=0}^N \int_0^T \int_{\Omega} d_j \partial_j \phi_i^{\Lambda}\left(f^N\right) \nabla f_{j}^N \cdot \nabla \xi \, dx\, dt \nonumber\\
    & \ \ \ \ \ + \int_0^T \int_{\Omega} \xi \, d\mu_i^{\Lambda}(x,t),\nonumber    
\end{align}
We will show that $Q_i^N(f^N)\in L^1((0,T)\times\Omega)$ uniformly in `$N$'. The following computation holds for $i=1,\cdots N$:
\begin{align}
    \int_{0}^T\int_{\Omega} | Q_i^N(f^N)|\, dx\, dt \leq & \int_0^T \int_{\Omega} \left(f_{i+1}^N \sum_{j=0}^{N-1} K_{i+1,j} f_j^N+f_{i}^N \sum_{j=0}^{N-1} K_{i,j}f_j^N\right) \, dx\, dt \label{uniform bound source in N}\\
    +& \int_0^T\int_{\Omega} \left( f_i^N\sum_{j=1}^N K_{j,i}f_j^N+f_{i-1}^N \sum_{j=1}^N K_{j,i-1}f_j^N\right)\, dx\, dt, \nonumber\\
    \leq \sup_{i,j}\left\{\frac{K_{i,j}}{(i+1)(j+1)}\right\} \int_0^T \int_{\Omega} &\left(\sum_{j=0}^{N-1} (i+2)(j+1)f_{i+1}^N f_j^N+ \sum_{j=0}^{N-1} (i+1)(j+1)f_i^Nf_j^N\right) \, dx\, dt \nonumber \\
    +\sup_{i,j}\left\{\frac{K_{i,j}}{(i+1)(j+1)}\right\} \int_0^T\int_{\Omega} &\left( \sum_{j=1}^N (i+1)(j+1)f_i^Nf_j^N+ \sum_{j=1}^N i(j+1) f_{i-1}^N f_j^N\right)\, dx\, dt \nonumber \\
    \lesssim & \sup_{i,j}\left\{\frac{K_{i,j}}{(i+1)(j+1)}\right\} \left( \frac{d^*}{d_*} \right)
\left\|
\sum_{i=0}^{\infty} (i+1)f_{i,0}
\right\|_{L^2(\Omega)}^{2}, \nonumber
\end{align}
where we use the estimate \eqref{L2 estimate: truncated system} from Theorem \ref{existence of solution: truncated system}, in the last line. Furthermore, thanks to  Lemma \ref{lem:trunc-energy}, we can show, for $0\leq i\leq N$, $v_i:=\phi_i^{\Lambda}(f^N)\in L^1((0,T); L^1(\Omega))\cap L^2((0,T);H^1(\Omega))$, where $\phi_i^{\Lambda}$ is as defined in \ref{truncation to identity multi D}. For $0\leq i\leq N$, let us consider
\begin{equation*}
    \left \{
    \begin{aligned}
        &v_0:=(v_{0,0},\cdots,\cdots, v_{0,N}),\\
        & v_{0,i}:=\phi^{\Lambda}_i(f_{i}^N(0))\in L^1(\Omega),\\
        & v_i:= \phi_i^{\Lambda}(f^N)\in L^1((0,T); L^1(\Omega))\cap L^2((0,T);H^1(\Omega)),\\
        & z_i:= -\sum_{j=0}^N d_j \partial_j \phi^{\Lambda}_i(f^N) \nabla f_j^N \in L^2((0,T); L^2(\Omega)),\\
        &w_i:=\sum_{j=0}^N \partial_j \phi_i^{\Lambda}(f^N) Q_j^N(f^N) \in L^1((0,T);L^1(\Omega)),\\
        & \nu_i \in RM([0,T)\times\Omega).
    \end{aligned}
    \right .
\end{equation*}
Hence applying Lemma \ref{TS: auxiliary lemma, renormalized solution} in \eqref{revisit 1}, we obtain that
\begin{align}
         \Bigg| &-\int_0^T\int_{\Omega} \psi(\phi^{\Lambda}(f^N)) \partial_t \xi \, dx\, dt- \int_{\Omega} \psi(\phi^{\Lambda}(f^N(0)))\xi(0)\, dx \nonumber \\
         &- \sum_{i=0}^N\sum_{j=0}^N \int_0^T\int_{\Omega} \xi \partial_i \psi(\phi^{\Lambda}(f^N)) \partial_j\phi_i^{\Lambda}(f^N) Q_j^N\, dx\, dt \nonumber\\
        &+\sum_{i=0}^N \sum_{j=0}^N\int_0^T\int_{\Omega} d_j\partial_i \psi(\phi^{\Lambda}(f^N)) \partial_j \phi_i^{\Lambda}(f^N)\nabla f_j^N\cdot\nabla\xi \, dx\, dt \nonumber
         \end{align}
             \begin{align}\label{TS: before ending truncation}
        &+\sum_{i=0}^N\sum_{k=0}^N \sum_{j=0}^N \sum_{l=0}^N\int_0^T\int_{\Omega} \xi d_j\partial_{i}\partial_{k}\psi(\phi^{\Lambda}(f^N)) \partial_j \phi_i^{\Lambda}(f^N)\nabla f_j^N\cdot\partial_l \phi_k^{\Lambda}(f^N) \nabla f^N_l\, dx\, dt\Bigg| \nonumber\\
        &\leq C(\Omega) \| \xi\|_{L^{\infty}([0,T)\times\Omega)} \left( \sup_{x\in \mathbb{R}^{N+1}}|D\psi(x)|\right) \sum_{i=0}^N \| \nu_i \|_{RM([0,T)\times\Omega)}, 
\end{align}
where $\phi^{\Lambda}(f^N):=(\phi_0^{\Lambda}(f^N),\cdots, \phi_N^{\Lambda}(f^N))$. Now, we pass the limit $\Lambda\to+\infty$. Note that thanks to Proposition \ref{TS: composition with the truncated smooth function}, r.h.s. of the above term goes to zero as $\Lambda\to\infty$. Using the fact that $\displaystyle{\lim_{\Lambda\to\infty}\phi_i^{\Lambda}(v)=v_i}$, $\displaystyle{\lim_{\Lambda\to\infty}\partial_j\phi_i^{\Lambda}(v)=\delta_{i,j}}$ and that the first derivatives of $\phi_i^{\Lambda}$ is uniformly bounded (see \ref{truncation to identity multi D}), we can pass the limit $\Lambda\to+\infty$ in the first term  \eqref{TS: before ending truncation} using dominated convergence theorem. It remains to deal with the fourth term of \eqref{TS: before ending truncation}. Thanks to the properties of $\phi^{\Lambda}_i$ (see \ref{truncation to identity multi D}), the pointwise limit holds
\begin{align}
\lim_{\Lambda\to\infty} \sum_{i=0}^N\sum_{k=0}^N \sum_{j=0}^N \sum_{l=0}^N \xi d_j\partial_{i}\partial_{k}&\psi(\phi^{\Lambda}(f^N)) \partial_j \phi_i^{\Lambda}(f^N)\nabla f_j^N\cdot\partial_l \phi_k^{\Lambda}(f^N) \nabla f^N_l  \label{TS:pointwise limit}\\
& = \sum_{i,k=0}^N \xi d_i \partial_i\partial_k \psi(f^N)\nabla f_i^N \nabla f_k^N, \nonumber
\end{align}
where we use the fact that $\displaystyle{\lim_{\Lambda\to\infty}\partial_j\phi_i^{\Lambda}(v)=\delta_{i,j}}$ (see \ref{truncation to identity multi D}). Furthermore, Let the support of $D(\psi)$ lies inside $B(0,M)$. Hence the function inside the fourth term in  \eqref{TS: before ending truncation} is non zero only when $\sum_{i=0}^N \phi_i^{\Lambda}(f^N)\leq M$. For large $\Lambda>M$, We intend to show that the function inside the fourth term in  \eqref{TS: before ending truncation} is non zero only when $\sum_{i=0}^M{f_i}\leq M$. Assume $\sum_{i=0}^M{f_i}\geq M$, then from \eqref{truncation to identity multi D}, we have that  $\sum_{i=0}^N \phi_i^{\Lambda}(f^N)\geq \min\left\{\Lambda,\sum_{i=0}^M{f_i}\right\} \geq M$. Here we use the fact that, for all $K_3>0$, $\sum_{i=0}^N v_j>K_3$, implies $\sum_{i=0}^N \phi_i^{\Lambda}(v)\geq \min\{K_3,\Lambda\}$, for all $\Lambda>0$ and $v\in(\mathbb{R}_{\geq 0})^{N+1}$. It yields that the function inside the fourth term in \eqref{TS: before ending truncation} vanish when $\sum_{i=0}^N f_i^N>M$. Hence the following computation holds
\begin{align*}
  &\sum_{i=0}^N\sum_{k=0}^N \sum_{j=0}^N \sum_{l=0}^N \xi d_j\partial_{i}\partial_{k}\psi(\phi^{\Lambda}(f^N)) \partial_j \phi_i^{\Lambda}(f^N)\nabla f_j^N\cdot\partial_l \phi_k^{\Lambda}(f^N) \nabla f^N_l   \\
  =4 \sum_{i=0}^N\sum_{k=0}^N& \sum_{j=0}^N \sum_{l=0}^N \Bigg(\xi d_j\sqrt{\left( f_j^N\right)} \sqrt{\left( f_l^N\right)}\partial_{i}\partial_{k}\psi(\phi^{\Lambda}(f^N)) \partial_j \phi_i^{\Lambda}(f^N)\nabla \sqrt{\left(f_j^N\right)}\cdot \\&\hspace{8cm}\partial_l \phi_k^{\Lambda}(f^N) \nabla \sqrt{\left(f^N_l\right)}\Bigg).
\end{align*}
Assume the support of $D\psi$ contained in $B(0,M)$, for some $M>0$, the above relation yields
\begin{align*}
  \Bigg|\sum_{i=0}^N\sum_{k=0}^N &\sum_{j=0}^N \sum_{l=0}^N  \xi d_j\partial_{i}\partial_{k}\psi(\phi^{\Lambda}(f^N)) \partial_j \phi_i^{\Lambda}(f^N)\nabla f_j^N\cdot\partial_l \phi_k^{\Lambda}(f^N) \nabla f^N_l  \Bigg|
   \end{align*}
             \begin{align*}
  \leq &8MNK_2^2\sup_{i\geq 0}\{d_j\}\|\xi\|_{L^{\infty}([0,T)\times\Omega)} \sup_{v\in\mathbb{R}^{N+1}}\left\{D^2(\psi)(v) \right\}\sum_{j=0}^N \sum_{l=0}^N \nabla\sqrt{\left(f_j^N\right)}\cdot \nabla \sqrt{\left(f^N_l\right)},
\end{align*}
where thanks to Theorem \ref{existence of solution: truncated system}, the last quantity is integrable and $K_2$ is as defined in \ref{truncation to identity multi D}. Hence, thanks to dominated convergence theorem and the relation \eqref{TS:pointwise limit}, we obtain
\begin{align*}
    \lim_{\Lambda\to\infty} \sum_{i=0}^N\sum_{k=0}^N \sum_{j=0}^N \sum_{l=0}^N \int_0^T \int_{\Omega}\xi d_j\partial_{i}\partial_{k}&\psi(\phi^{\Lambda}(f^N)) \partial_j \phi_i^{\Lambda}(f^N)\nabla f_j^N\cdot\partial_l \phi_k^{\Lambda}(f^N) \nabla f^N_l \, dx\, dt \label{TS:pointwise limit}\\
& = \sum_{i,k=0}^N \int_0^T\int_{\Omega} \xi d_i \partial_i\partial_k \psi(f^N)\nabla f_i^N \nabla f_k^N \, dx\, dt.
\end{align*}
\end{proof}
Before proceeding further, we define a special one-dimensional function that serves as a truncation of the identity function.
\subsection{Truncation to identity}\label{truncation to identity} Let $\Lambda>0$. We consider the function $\Phi^{\Lambda}: \mathbb{R}_{\geq 0}\to\mathbb{R}_{\geq 0}$, satisfies
\begin{itemize}
    \item [$\bullet$] $\Phi^{\Lambda}\in C^2(\mathbb{R}_{\geq 0})$.
    \item[$\bullet$] There exists $K_1>0$, such that $\displaystyle{\sqrt{v}\sqrt{v}\left| \frac{d^2}{dv^2}(\Phi^{\Lambda}(v))\right|\leq K_1}$, \ for all $v\in\mathbb{R}_{\geq 0}$ and for all $\Lambda$.
    \item[$\bullet$] For each $\Lambda>0$, $\mathrm{supp}\left\{\frac{d}{dv}(\Phi^{\Lambda})\right\}$ is compact, say  $\mathrm{supp}\left\{\frac{d}{dv}(\Phi^{\Lambda})\right\} \subset B(0,\Lambda^*)$. 
    \item[$\bullet$] For all $v\geq 0$, $\displaystyle{\lim_{\Lambda\to\infty} \frac{d}{dv} \Phi^{\Lambda}(v)=1}$.
    \item[$\bullet$] There exists $K_2\geq 0$, such that $\displaystyle{\left| \frac{d}{dv} \Phi^{\Lambda}(v)\right|\leq K_2, \ \forall\, v\in\mathbb{R}_{\geq 0},  \ \forall\, \Lambda>0}$.
    \item[$\bullet$] $\Phi^{\Lambda}(v)=v$, \ $\forall\, v\in\mathbb{R}_{\geq 0)}$, where $v\leq \Lambda$.
    \item[$\bullet$] $\displaystyle{\lim_{\Lambda\to\infty} \sup_{|v|\leq K}\left| \frac{d^2}{dv^2} \Phi^{\Lambda}(v)\right|=0, \ \forall \, K>0}$. 
\end{itemize}
 Such a function can be constructed following Remark \ref{existence of such truncation to identity}. We have the following proposition.
\begin{prop}\label{TS: composition with a special truncation to identity}
    Let $\eta>0$ be arbitrary. For $0\leq i\leq N$, let $f_i^N$ be the solution to \eqref{truncated system}, obtained as the $L^1((0,T);W^{1,1}(\Omega))$ limit of the solutions of \eqref{regularized truncated system}. Let $\Phi^{\Lambda}$ be as defined in \ref{truncation to identity}. Let us fix an index $i=I$. Let us denote $f_{-1}^N=Q_{-1}^N=d_{-1}=0$. Then, for all $\xi\in C_c^{\infty}([0,T)\times\Omega)$, the following holds
\begin{align*}
    &-\int_{\Omega} \Phi^{\Lambda} \left(f_I^N+\eta\left(f_{I+1}^N+f_{I-1}^N\right)(0)\right) \xi(0) \, dx -\int_0^T\int_{\Omega} \Phi^{\Lambda} \left(f_I^N+\eta\left(f_{I+1}^N+f_{I-1}^N\right)\right) \partial_t\xi \, dx\, dt\\
    &= \int_0^T \int_{\Omega} (\Phi^{\Lambda})' \left(f_I^N+\eta\left(f_{I+1}^N+f_{I-1}^N\right)\right) \left[Q_I^N+\eta\left(Q_{I+1}^N+Q_{I-1}^N\right)\right] \xi \, dx\, dt\\
    &- \int_0^T \int_{\Omega} d_I (\Phi^{\Lambda})' \left(f_I^N+\eta\left(f_{I+1}^N+f_{I-1}^N\right)\right) \left(\nabla f_I^N+\eta\left(\nabla f_{I+1}^N+\nabla f_{I-1}^N\right)\right)\cdot \nabla \xi \, dx\, dt\\
    & -\int_0^T\int_{\Omega} d_{I}(\Phi^{\Lambda})'' \left(f_I^N+\eta\left(f_{I+1}^N+f_{I-1}^N\right)\right) \left| \nabla f_I^N+\eta\left(\nabla f_{I+1}^N+\nabla f_{I-1}^N\right)\right|^2 \xi \, dx\, dt\\
    &- \int_0^T \int_{\Omega} \eta(d_{I-1}-d_I)\Bigg((\Phi^{\Lambda})''\left(f_I^N+\eta\left(f_{I+1}^N+f_{I-1}^N\right)\right) \nabla f_{I-1}^N \cdot
     \end{align*}
             \begin{align*}
    &\hspace{7cm}\left(\nabla f_I^N+\eta\left(\nabla f_{I+1}^N+\nabla f_{I-1}^N\right)\right) \xi \Bigg)\, dx\, dt \\
    &- \int_0^T \int_{\Omega} \eta(d_{I+1}-d_I)\Bigg((\Phi^{\Lambda})'' \left(f_I^N+\eta\left(f_{I+1}^N+f_{I-1}^N\right)\right) \nabla f_{I+1}^N \cdot \\
    &\hspace{7cm}\left(\nabla f_I^N+\eta\left(\nabla f_{I+1}^N+\nabla f_{I-1}^N\right)\right) \xi\Bigg) \, dx\, dt \\
    &- \int_0^T \int_{\Omega} \eta (d_{I-1}-d_I)  (\Phi^{\Lambda})' \left(f_I^N+\eta\left(f_{I+1}^N+f_{I-1}^N\right)\right) \nabla f_{I-1}^N \cdot\nabla \xi \, dx\, dt\\
     &- \int_0^T \int_{\Omega} \eta (d_{I+1}-d_I)  (\Phi^{\Lambda})' \left(f_I^N+\eta\left(f_{I+1}^N+f_{I-1}^N\right)\right) \nabla f_{I+1}^N \cdot \nabla \xi \, dx\, dt.
\end{align*}
\end{prop}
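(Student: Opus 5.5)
This identity should be a direct specialization of the renormalized formulation in Theorem \ref{TS: renormalize solution to}. We apply that theorem with the test nonlinearity
\[
\psi(v):=\Phi^{\Lambda}\bigl(v_I+\eta(v_{I+1}+v_{I-1})\bigr),\qquad v\in\mathbb{R}^{N+1},
\]
with the convention that the component $v_{-1}$ is absent when $I=0$ (and likewise $v_{N+1}$ when $I=N$). This matches the convention $f_{-1}^N=Q_{-1}^N=d_{-1}=0$.

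\textbf{Admissibility of $\psi$.} Since $\Phi^{\Lambda}$ has compactly supported derivative in $B(0,\Lambda^*)$, $D\psi$ vanishes unless $v_I+\eta(v_{I+1}+v_{I-1})\le\Lambda^*$. On the nonnegative cone, where $f^N$ lives, this forces $v_I,v_{I\pm1}$ to stay bounded (by $\Lambda^*$ and $\Lambda^*/\eta$ respectively). If Theorem \ref{TS: renormalize solution to} literally needs compact support on all of $\mathbb{R}^{N+1}$, two fixes are available.
\begin{itemize}
\item Multiply $\psi$ by a cutoff that equals one on the cone and vanishes away from it; this does not change $\psi(f^N)$ or its derivatives along the solution.
\item Alternatively, rerun the proof of that theorem. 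That proof only uses $D\psi$ along $f^N$ and the bound from Lemma \ref{TS: auxiliary lemma, renormalized solution}.
\end{itemize}
Also, $\Phi^{\Lambda}$ is only $C^2$, while Theorem \ref{TS: renormalize solution to} asks for smooth $\psi$. We handle this by mollifying $\Phi^{\Lambda}$ and passing to the limit by dominated convergence. Uniform bounds on the first and second derivatives of the mollifications on compact sets make this work, and the integrability of the gradient terms comes from \eqref{L2 estimate: truncated system-e} together with Lemma \ref{lem:trunc-energy}.

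\textbf{The computation.} Write $u:=f_I^N+\eta(f_{I+1}^N+f_{I-1}^N)$. The only nonzero derivatives of $\psi$ are
\[
\partial_I\psi=(\Phi^{\Lambda})'(u),\qquad \partial_{I\pm1}\psi=\eta(\Phi^{\Lambda})'(u),
\]
\[
\partial_j\partial_k\psi=c_jc_k(\Phi^{\Lambda})''(u),\qquad c_I=1,\ c_{I\pm1}=\eta .
\]
Substituting into \eqref{TS: renormalized solution, intermediate 1} gives three groups of terms.
\begin{itemize}
\item The source term becomes $(\Phi^{\Lambda})'(u)\,[Q_I^N+\eta(Q_{I+1}^N+Q_{I-1}^N)]\,\xi$.
\item The first-order diffusion term becomes $-(\Phi^{\Lambda})'(u)\bigl(d_I\nabla f_I^N+\eta d_{I+1}\nabla f_{I+1}^N+\eta d_{I-1}\nabla f_{I-1}^N\bigr)\cdot\nabla\xi$. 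Writing $d_{I\pm1}=d_I+(d_{I\pm1}-d_I)$ turns this into $-d_I(\Phi^{\Lambda})'(u)\nabla u\cdot\nabla\xi$ plus the last two correction terms of the statement.
\item The Hessian term is $-\xi(\Phi^{\Lambda})''(u)\sum_{j}d_jc_j\nabla f_j^N\cdot\sum_k c_k\nabla f_k^N=-\xi(\Phi^{\Lambda})''(u)\bigl(\sum_j d_jc_j\nabla f_j^N\bigr)\cdot\nabla u$. The same splitting of $d_{I\pm1}$ gives $-d_I(\Phi^{\Lambda})''(u)|\nabla u|^2\xi$ plus the two terms involving $\eta(d_{I\pm1}-d_I)(\Phi^{\Lambda})''(u)\nabla f^N_{I\pm1}\cdot\nabla u\,\xi$.
\end{itemize}
The left-hand side is $\psi(f^N)=\Phi^{\Lambda}(u)$, evaluated at time $0$ and against $\partial_t\xi$, exactly as stated.

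\textbf{Main obstacle.} The algebra is routine. The only genuine point is justifying the application of Theorem \ref{TS: renormalize solution to} to this particular $\psi$: it is merely $C^2$, and its derivative is compactly supported only on the nonnegative cone. We plan to settle this by the mollification-and-cutoff argument above, checking that every integrand is dominated. For the gradient-squared terms the domination uses that $\nabla f_j^N$ is square integrable on $\{u\le\Lambda^*\}$, by Lemma \ref{lem:trunc-energy} applied to each component with $M=\Lambda^*/\min(1,\eta)$.
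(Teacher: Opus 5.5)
Your proposal is correct and follows the paper's own proof. The paper also substitutes $\psi(v)=\Phi^{\Lambda}(v_I+\eta(v_{I+1}+v_{I-1}))$ into Theorem \ref{TS: renormalize solution to} and then rearranges, and your factored Hessian term with the splitting $d_{I\pm1}=d_I+(d_{I\pm1}-d_I)$ is exactly that rearrangement, done more cleanly than the paper's full expansion. The paper never discusses admissibility, so your concern is justified, but your first fix does not work: a cutoff to the nonnegative cone cannot make $D\psi$ compactly supported, because $\psi$ is constant in all coordinates other than $v_{I-1},v_I,v_{I+1}$, so only your second option (re-running the $\Lambda\to\infty$ argument of that theorem, with a new domination of the cross-gradient terms since $\sum_j f_j^N$ is no longer bounded on the support of $D^2\psi$) actually settles it.
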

\begin{proof}
 Let us choose $\psi(v)=\Phi^{\Lambda}(v_I+\eta(v_{I+1}+v_{I-1}))$ in Theorem \ref{TS: renormalize solution to}, where $v:=(v_0,\cdots,v_N)\in\mathbb{R}^{N+1}$ with $v_{-1}=0$. From  \eqref{TS: renormalized solution, intermediate 1}, we obtain
    \begin{align*}
         &-\int_{\Omega} \Phi^{\Lambda} \left(f_I^N+\eta\left(f_{I+1}^N+f_{I-1}^N\right)(0)\right) \xi(0) \, dx -\int_0^T\int_{\Omega} \Phi^{\Lambda} \left(f_I^N+\eta\left(f_{I+1}^N+f_{I-1}^N\right)\right) \partial_t\xi \, dx\, dt\\
         &= \int_0^T\int_{\Omega} (\Phi^{\Lambda})' \left(f_I^N+\eta\left(f_{I+1}^N+f_{I-1}^N\right)\right) Q_I^N(f^N)\xi\, dx\, dt\\
         &+\int_0^T\int_{\Omega} \eta (\Phi^{\Lambda})' \left(f_I^N+\eta\left(f_{I+1}^N+f_{I-1}^N\right)\right) Q_{I+1}^N(f^N)\xi\, dx\, dt\\
        & +\int_0^T\int_{\Omega} \eta(\Phi^{\Lambda})' \left(f_I^N+\eta\left(f_{I+1}^N+f_{I-1}^N\right)\right) Q_{I-1}^N(f^N)\xi\, dx\, dt\\
        &- \int_0^T \int_{\Omega} d_I (\Phi^{\Lambda})' \left(f_I^N+\eta\left(f_{I+1}^N+f_{I-1}^N\right)\right) \nabla f_I^N\cdot\nabla \xi\, dx\, dt\\
        &- \int_0^T \int_{\Omega} \eta d_{I+1} (\Phi^{\Lambda})' \left(f_I^N+\eta\left(f_{I+1}^N+f_{I-1}^N\right)\right) \nabla f_{I+1}^N\cdot\nabla \xi\, dx\, dt\\
        &- \int_0^T \int_{\Omega} \eta d_{I-1} (\Phi^{\Lambda})' \left(f_I^N+\eta\left(f_{I+1}^N+f_{I-1}^N\right)\right) \nabla f_{I-1}^N\cdot\nabla \xi\, dx\, dt\\
        & - \int_0^T \int_{\Omega} d_I \xi (\Phi^{\Lambda})''\left(f_I^N+\eta\left(f_{I+1}^N+f_{I-1}^N\right)\right) \left| \nabla f_I^N\right|^2\, dx\, dt\\
         & - \int_0^T \int_{\Omega} \eta d_I \xi (\Phi^{\Lambda})'' \left(f_I^N+\eta\left(f_{I+1}^N+f_{I-1}^N\right)\right) \nabla f_I^N \cdot \left( \nabla f_{I+1}^N+\nabla f_{I-1}^N\right) \, dx\, dt
             \end{align*}
             \begin{align*}
         & - \int_0^T \int_{\Omega}  \eta d_{I+1} \xi (\Phi^{\Lambda})''\left(f_I^N+\eta\left(f_{I+1}^N+f_{I-1}^N\right)\right) \nabla f_I^N \cdot \nabla f_{I+1}^N \, dx\, dt\\
         & - \int_0^T \int_{\Omega} \eta d_{I-1} \xi (\Phi^{\Lambda})''\left(f_I^N+\eta\left(f_{I+1}^N+f_{I-1}^N\right)\right) \nabla f_I^N \cdot \nabla f_{I-1}^N\, dx\, dt\\
         & - \int_0^T \int_{\Omega} \eta^2  d_{I+1} \xi (\Phi^{\Lambda})'' \left(f_I^N+\eta\left(f_{I+1}^N+f_{I-1}^N\right)\right) \left(\left| \nabla f_{I+1}\right|^2+\nabla f_{I-1}^N\cdot \nabla f_{I+1}^N\right) \, dx\, dt\\
        & - \int_0^T \int_{\Omega} \eta^2  d_{I-1} \xi (\Phi^{\Lambda})''\left(f_I^N+\eta\left(f_{I+1}^N+f_{I-1}^N\right)\right) \left(\left| \nabla f_{I-1}\right|^2+\nabla f_{I-1}^N\cdot \nabla f_{I+1}^N\right) \, dx\, dt.
    \end{align*}
Rearrangement of the above terms yields the result.     
\end{proof}
\section{Existence to the diffusive nonlinear exchange driven growth model}
In this section we devote ourselves to find compactness of the sequence $\{f_i^N\}$ as $N\to \infty$. We recall a compactness result from\cite{baras1984, bothe2010, Pierre2010}, which we
use to establish the convergence of the sequence of solutions to the truncated system \eqref{truncated system}.
\begin{lem}\label{lem:compactness}
Let $d>0$. The mapping $( \mathcal{F}^{\rm{in}}, \mathcal{B})\mapsto \mathcal{F}$, where $\mathcal{F}$ is the solution of
\begin{equation}\label{eq:compactness_heat}
\begin{cases}
\partial_t \mathcal{F} - d \Delta \mathcal{F} = \mathcal{B} & \text{in } (0,T)\times \Omega \\
\nabla \mathcal{F}\cdot n = 0 & \text{on }  (0,T) \times \partial\Omega \\
\mathcal{F}(0,\cdot) =  \mathcal{F}^{\rm{in}} & \text{in } \Omega,
\end{cases}
\end{equation}
is compact from $L^{1}(\Omega)\times L^{1}((0,T)\times \Omega)$ into
$L^{1}((0,T)\times \Omega)$, and even into
$L^{1}\big((0,T); W^{1,1}(\Omega)\big)$.
\end{lem}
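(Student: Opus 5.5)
The plan is to prove Lemma \ref{lem:compactness} by splitting the solution map into the part coming from the initial datum and the part coming from the source, and showing each is compact. Let $S(t)=e^{td\Delta_N}$ denote the Neumann heat semigroup on $\Omega$. Then
\[
\mathcal{F}(t)=S(t)\mathcal{F}^{\rm in}+\int_0^t S(t-s)\mathcal{B}(s)\,ds=:\mathcal{F}_1+\mathcal{F}_2 .
\]
Since a sum of compact operators is compact, it suffices to treat $\mathcal{F}^{\rm in}\mapsto\mathcal{F}_1$ and $\mathcal{B}\mapsto\mathcal{F}_2$ separately as maps into $L^1((0,T);W^{1,1}(\Omega))$. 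Compactness into $L^1((0,T)\times\Omega)$ then follows from the continuous embedding.

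\textbf{Smoothing estimates.} The key ingredient is the standard estimate for the Neumann heat kernel on a smooth bounded domain:
\[
\|\nabla S(t)g\|_{L^1(\Omega)}\le C\,(1+t^{-1/2})\,\|g\|_{L^1(\Omega)} .
\]
It holds more generally with a $W^{\theta,1}$ gain, namely $\|S(t)g\|_{W^{1+\sigma,1}}\le C t^{-(1+\sigma)/2}\|g\|_{L^1}$ for small $\sigma>0$, obtained by interpolation between the $L^1\to W^{1,1}$ and $L^1\to W^{2,1}$ bounds. Since $t^{-(1+\sigma)/2}$ is integrable on $(0,T)$ when $\sigma<1$, both $\mathcal{F}_1$ and $\mathcal{F}_2$ are bounded, for bounded data, in $L^1((0,T);W^{1+\sigma,1}(\Omega))$. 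For $\mathcal{F}_2$ one uses Young's inequality in time:
\[
\|\mathcal{F}_2\|_{L^1(W^{1+\sigma,1})}\le \|C s^{-(1+\sigma)/2}\|_{L^1(0,T)}\,\|\mathcal{B}\|_{L^1}.
\]

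\textbf{Time regularity and Aubin--Lions--Simon.} Next I need control of time translates. The equation gives $\partial_t\mathcal{F}=d\Delta\mathcal{F}+\mathcal{B}$, which is bounded in $L^1((0,T);(W^{1,\infty})')$, or in a negative Sobolev space $W^{-1-\sigma',1}$, on bounded sets of data. Along this route, $\mathcal{F}_1$ requires care near $t=0$ because $\Delta S(t)g$ is not integrable in time. I will therefore prefer Simon's criterion in the form of time translates: $\|\mathcal{F}(\cdot+h)-\mathcal{F}\|_{L^1(0,T-h;L^1)}\to0$ uniformly. For $\mathcal{F}_1$, I write $S(t+h)g-S(t)g=(S(h)-I)S(t)g$ and use $\|(S(h)-I)S(t)g\|_{L^1}\le C\min(1,(h/t)^{1/2})\|g\|_{L^1}$; integrating in $t$ gives $O(h^{1/2}\log(1/h))$. $\mathcal{F}_2$ is handled similarly, combined with the contribution $\int_t^{t+h}S(t+h-s)\mathcal{B}\,ds$, whose $L^1$ norm is controlled by the $L^1$ contractivity of $S$ and by Fubini.

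With spatial boundedness in $L^1(W^{1+\sigma,1})$, the compact embedding $W^{1+\sigma,1}(\Omega)\hookrightarrow W^{1,1}(\Omega)$ (Rellich for fractional spaces on a smooth bounded domain), and uniform time translates in $L^1(L^1)$ interpolated up to $W^{1,1}$, Simon's compactness theorem yields relative compactness in $L^1((0,T);W^{1,1}(\Omega))$.

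\textbf{Main obstacle.} The delicate step is the $L^1$-based smoothing estimate with a fractional gain beyond $W^{1,1}$, since $L^1$ is not a UMD space and maximal regularity fails. If the fractional-order estimate is troublesome, I would fall back on the argument of Baras--Pierre. Compactness in $L^1(L^1)$ comes from the $L^1\to W^{1,1}$ estimate together with time translates. The upgrade to $L^1(W^{1,1})$ then follows because $\nabla\mathcal{F}$ is obtained from the data by the operator with kernel $\nabla_x G(t,x,y)$, which is a uniform $L^1$-limit of compact operators. This is seen by cutting out $\{t-s<\delta\}$, where the contribution is $O(\delta^{1/2})$, and applying on the remainder the smooth kernel, which maps bounded sets to equicontinuous, hence precompact, families.
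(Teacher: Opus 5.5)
The paper gives no proof of Lemma~\ref{lem:compactness}; it simply quotes it from Baras--Pierre, Bothe--Pierre and Pierre's survey. Your proposal is correct and gives an actual proof.

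Your fallback is the standard semigroup argument for this fact. The solution operator into $L^1((0,T);W^{1,1}(\Omega))$ is an operator-norm limit of compact operators: removing the strip $\{t-s<\delta\}$ costs $O(\delta^{1/2})$ by $\|\nabla S(t)\|_{L^1\to L^1}\le Ct^{-1/2}$. The argument needs only this gradient bound and the smoothness of the kernel away from the diagonal. Use a smooth cutoff $\chi(t-s)$ rather than a sharp one: with a sharp cutoff the image has a jump in $t$ and is not equicontinuous in the Arzel\`a--Ascoli sense.

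Your main route replaces this approximation by a single a priori bound in $L^1((0,T);W^{1+\sigma,1}(\Omega))$, $0<\sigma<1$, plus time translates in $L^1$. It closes with Simon's theorem for the chain $W^{1+\sigma,1}\hookrightarrow W^{1,1}\hookrightarrow L^1$ (compact, then continuous). This is cleaner and gives a quantitative modulus. Its cost is the bound $\|S(t)\|_{L^1\to W^{2,1}}\le Ct^{-1}$, which you should take from the pointwise second-derivative Gaussian bounds of the Neumann heat kernel on a smooth domain. It does not follow from the $L^1$ generator, whose domain is strictly larger than $W^{2,1}$. The interpolation step is then routine via the $K$-functional, since $K(s,S(t)g)\le C\min\{t^{-1/2},\,s\,t^{-1}\}\|g\|_{L^1}$. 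So the step you flag as the main obstacle is not one: maximal regularity never enters, and its failure in $L^1$ is exactly why you must stop at $\sigma<1$.

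Two minor points:
\begin{itemize}
\item The translate bound has no logarithm, since $\int_0^T\min\{1,(h/t)^{1/2}\}\,dt\le h+2(hT)^{1/2}$.
\item The difficulty you anticipate with $\mathcal{F}_1$ near $t=0$ in the Aubin--Lions version does not arise if $\partial_t\mathcal{F}$ is measured in $(W^{1,\infty}(\Omega))'$. The Neumann condition gives $\|\Delta S(t)g\|_{(W^{1,\infty})'}\le\|\nabla S(t)g\|_{L^1}\le Ct^{-1/2}\|g\|_{L^1}$, which is integrable in $t$.
\end{itemize}
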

We are now well equipped to establish compactness for the solutions to the truncated system \eqref{truncated system}. More precisely, we have the following lemma.
\begin{lem}\label{compactness to the solution of TS}
For $0\leq i\leq N$, let $f_i^N$ be the solutions to the truncated system \eqref{truncated system}, obtained as $L^1((0,T);W^{1,1}(\Omega))$ limit of the solutions of \eqref{regularized truncated system}. Then, for all $i\in\mathbb{N}$, there exists $f_i$ such that $f_i^N\to f_i$ in  $L^1((0,T);W^{1,1}(\Omega))$ as $N\to \infty$.  Furthermore, the following holds
    \begin{equation}
\label{L2 estimate: main system}
\left \{
\begin{aligned}
& \int_0^T \int_{\Omega} \left( \sum_{i=0}^{\infty} (i+1) f_i(t,x)\right)\, dx \leq \left\| \sum_{i=0}^N (i+1) f_{i,0}\right\|_{L^1(\Omega)},\ \ \forall\, t\geq 0,\\  
&\int_0^T \int_\Omega
\left( \sum_{i=0}^{\infty} i  f_{i}(t,x) \right)
\left( \sum_{i=0}^{\infty} i f_{i}(t,x) \right)
\,dx\,dt
\;\lesssim\;
 \frac{d^*}{d_*} 
\left\|
\sum_{i=0}^{\infty} i f_{i,0}
\right\|_{L^2(\Omega)}^{2},\\
&\int_0^T \int_\Omega
\left( \sum_{i=0}^{\infty}   f_{i}(t,x) \right)
\left( \sum_{i=0}^{\infty}  f_{i}(t,x) \right)
\,dx\,dt
\;\lesssim\;
 \frac{d^*}{d_*} 
\left\|
\sum_{i=0}^{\infty} f_{i,0}
\right\|_{L^2(\Omega)}^{2},\\
&\int_0^T \int_\Omega
\left( \sum_{i=0}^{\infty}  (i+1) f_{i}(t,x) \right)
\left( \sum_{i=0}^{\infty}  (i+1)f_{i}(t,x) \right)
\,dx\,dt
\\
&\hspace{6cm}\;\lesssim\;
 \frac{d^*}{d_*}
\left\|
\sum_{i=0}^{\infty} (i+1)f_{i,0}
\right\|_{L^2(\Omega)}^{2},\\
&\hspace{5cm}\sum_{i=0}^{\infty} \int_0^T\int_{\Omega} \frac{|\nabla f_{i}|^2}{f_{i}} \leq M_{\mathcal{F}},
\end{aligned}
\right .
\end{equation}
where $M_{\mathcal{F}}$ is a positive constant.
\end{lem}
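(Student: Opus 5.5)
The plan is to apply the compactness Lemma~\ref{lem:compactness} component by component, and then use a diagonal argument. Fix $i\in\mathbb{N}\cup\{0\}$ and consider only $N\ge i+1$. Then $f_i^N$ solves the scalar heat equation with diffusion $d_i$, Neumann condition, initial datum $f_{i,0}$, and source $\mathcal{B}=Q_i^N(f^N)$.

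The first step is to bound $(f_{i,0},Q_i^N(f^N))$ in $L^1(\Omega)\times L^1((0,T)\times\Omega)$ uniformly in $N$.
\begin{itemize}
\item The initial data are independent of $N$ and lie in $L^1$ by \eqref{H_init}.
\item For the source, use \eqref{uniform bound source in N}. By \eqref{H_kernel}, every term of $Q_i^N$ is dominated by $C\,(i+2)^2 \bigl(\sum_j (j+1)f_j^N\bigr)\bigl(\sum_j (j+1) f_j^N\bigr)$. Its $L^1$ norm is therefore controlled by \eqref{L2 estimate: truncated system-d}, uniformly in $N$, with a constant depending only on $i$, $T$ and the data.
\end{itemize}
Lemma~\ref{lem:compactness} then gives relative compactness of $\{f_i^N\}_N$ in $L^1((0,T);W^{1,1}(\Omega))$. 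Extracting successively for $i=0,1,2,\dots$ and taking the diagonal subsequence, we obtain a single subsequence with $f_i^N\to f_i$ in $L^1((0,T);W^{1,1}(\Omega))$ for every $i$. Passing to a further subsequence, we also get $f_i^N\to f_i$ and $\nabla f_i^N\to\nabla f_i$ a.e. in $(0,T)\times\Omega$. Nonnegativity of $f_i$ is inherited.

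The second step transfers the estimates \eqref{L2 estimate: truncated system} to the limit through Fatou's lemma.
\begin{itemize}
\item For each fixed $M$, the partial sums converge: $\sum_{i=0}^M (i+1)f_i^N\to\sum_{i=0}^M (i+1) f_i$ a.e. (and in $L^1$).
\item Fatou's lemma together with \eqref{L2 estimate: truncated system-d} bounds $\int_0^T\!\int_\Omega\bigl(\sum_{i=0}^M (i+1)f_i\bigr)^2$ by the right-hand side, independently of $M$.
\item Monotone convergence in $M$ yields the fourth estimate. The second and third estimates follow identically, and the first follows from \eqref{L2 estimate: truncated system-a} and Fatou for a.e. $t$.
\item For the Fisher information, the integrand $|\nabla f_i^N|^2/f_i^N$ is a lower semicontinuous convex function of $(f,\nabla f)$, equal to $+\infty$ when $f=0$ and $\nabla f\neq0$, and equal to $0$ at $(0,0)$. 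Pointwise a.e. convergence plus Fatou give $\int\!\!\int |\nabla f_i|^2/f_i\le\liminf_N \int\!\!\int |\nabla f_i^N|^2/f_i^N$. Summing over $i\le M$, using \eqref{L2 estimate: truncated system-e} and letting $M\to\infty$ gives the bound $M_{\mathcal{F}}$.
\end{itemize}

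The only delicate point I anticipate is ensuring that the $L^1$ bound on $Q_i^N$ is genuinely uniform in $N$. The infinite tails $\sum_j K_{i,j}f_j^N$ are controlled by $(i+1)\sum_j(j+1)f_j^N$, and the latter lies in $L^2$ uniformly by \eqref{L2 estimate: truncated system-d}, so this should go through. The diagonal extraction and the lower semicontinuity arguments are routine. Strictly, convergence holds along a subsequence, which is what the statement intends.
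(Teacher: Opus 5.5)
Your proposal is correct and follows the same route as the paper. It uses a uniform-in-$N$ $L^1$ bound on $Q_i^N(f^N)$ from \eqref{uniform bound source in N} (via \eqref{L2 estimate: truncated system-d}), applies Lemma~\ref{lem:compactness} for each fixed $i$, and passes the bounds \eqref{L2 estimate: truncated system} to the limit with Fatou's lemma; the diagonal extraction, the a.e.\ convergence of gradients, and the joint lower semicontinuity of $(f,p)\mapsto |p|^2/f$ for the Fisher information are details the paper leaves implicit, and you supply them correctly.
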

\begin{proof}
    Let us fix an index $i\in\mathbb{N}$. the function $f_i^N$ satisfies the following equation:
\begin{equation*}
\begin{cases}
\partial_t f_i^N - d_i \Delta f_i^N = Q_i^N & \text{in } (0,T)\times \Omega \\
\nabla f_i^N\cdot n = 0 & \text{on }  (0,T) \times \partial\Omega \\
f_i^N(0,\cdot) = f_{i,0} & \text{in } \Omega.
\end{cases}
\end{equation*}
Thanks to \eqref{uniform bound source in N}, we have that 
\[
\int_0^T \int_{\Omega} \left| Q_i^N(f^N)\right| \, dx\, dt \lesssim \sup_{i,j}\left\{\frac{K_{i,j}}{(i+1)(j+1)}\right\} \left( \frac{d^*}{d_*} \right)
\left\|
\sum_{i=0}^{\infty} (i+1)f_{i,0}
\right\|_{L^2(\Omega)}^{2}.
\]
Hence, Lemma \ref{lem:compactness} yields the result. Result of the estimate follows from Fatou's lemma applied on \eqref{L2 estimate: truncated system}.
\end{proof}
We now try to pass $N \to \infty$ in Proposition \ref{TS: composition with a special truncation to identity}. We have the following lemma.
\begin{lem}\label{N to infinity inside truncation}
    For $i\in \mathbb{N}\cup\{0\}$, let $f_i$ be the function  described in Lemma \ref{compactness to the solution of TS}. Let $\eta\in(0,1)$ be arbitrary and $\Phi^{\Lambda}$ be as defined in \ref{truncation to identity}. Let us fix an index $i=I$. Let us denote $f_{-1}=Q_{-1}=d_{-1}=0$. Then for all $\xi\in C_c^{\infty}([0,T)\times\Omega)$, the following holds
    \begin{align*}
       \Bigg| & -\int_{\Omega} \Phi^{\Lambda} \left(f_{I,0}+\eta\left(f_{I+1,0}+f_{I-1,0}\right)\right) \xi(0)\, dx-\int_0^T \int_{\Omega} \Phi^{\Lambda} \left( f_I +\eta\left( f_{I+1}+f_{I-1}\right)\right) \partial_t \xi\, dx\, dt\\
       & -\int_0^T \int_{\Omega} \left(\Phi^{\Lambda}\right)'\left( f_I+\eta\left(f_{I+1}+f_{I-1}\right)\right) Q_I \xi \, dx\, dt\\
       &+ \int_0^T \int_{\Omega} \left(\Phi^{\Lambda}\right)'\left( f_I+\eta\left(f_{I+1}+f_{I-1}\right)\right) \nabla f_I \cdot \nabla\xi \, dx\, dt + \int_0^T\int_{\Omega} \nu_{I,\eta}^{\Lambda} \xi \, dx\, dt\Bigg| \\
       & \qquad \leq \texttt{C} \eta^{\frac{1}{2}} \|\xi\|_{W^{1,\infty}((0,T)\times\Omega)},
    \end{align*}
    for some constant $\texttt{C}>0$ (may depend on $\Lambda$) and  radon measure $\nu_{I,\eta}^{\Lambda}\in RM([0,T)\times\Omega)$. Here $Q_I(f)$ is as defined in \eqref{main equation} and 
    \[
    \lim_{\Lambda\to \infty} \left|\nu_{I,\eta}^{\Lambda} \right|([0,T)\times\Omega)\to 0, \quad \forall I\in\mathbb{N}, \, \eta>0.
    \]
\end{lem}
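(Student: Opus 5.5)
The plan is to start from the identity of Proposition~\ref{TS: composition with a special truncation to identity} at fixed $N$ and fixed index $I$, and pass to the limit $N\to\infty$ term by term. Write $w^N:=f_I^N+\eta(f_{I+1}^N+f_{I-1}^N)$ and $w:=f_I+\eta(f_{I+1}+f_{I-1})$.

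\textbf{Linear terms.} By Lemma~\ref{compactness to the solution of TS}, after extraction $f_j^N\to f_j$ in $L^1((0,T);W^{1,1}(\Omega))$ and almost everywhere, and $\nabla f_j^N\to\nabla f_j$ almost everywhere, for $j=I-1,I,I+1$. Since $\Phi^\Lambda$ is Lipschitz with constant $K_2$ and $(\Phi^\Lambda)'$ is bounded and continuous, the following limits follow by dominated convergence:
\begin{itemize}
\item the terms $\int\Phi^\Lambda(w^N)\partial_t\xi$ and the initial term converge to their counterparts with $w$ and the initial data;
\item the terms $\int(\Phi^\Lambda)'(w^N)\nabla f_j^N\cdot\nabla\xi$ converge as well, using $L^1$ convergence of the gradients.
\end{itemize}
The $\eta d_{I\pm1}$ and $\eta(d_{I\pm1}-d_I)$ gradient terms together give the $\eta d_{I\pm1}(\Phi^\Lambda)'(w)\nabla f_{I\pm1}\cdot\nabla\xi$ contributions. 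They are bounded by $\eta K_2 d^*\|\nabla\xi\|_\infty\|\nabla f_{I\pm1}\|_{L^1}$. Writing $|\nabla f|=2\sqrt f|\nabla\sqrt f|$ and applying Cauchy--Schwarz together with the Fisher bound $M_{\mathcal F}$ and the $L^1$ bound, this is $\lesssim\eta$, hence $\le C\eta^{1/2}$ since $\eta<1$. These terms are thrown into the error.

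\textbf{Source term.} The next step is to show $Q_j^N(f^N)\to Q_j(f)$ in $L^1((0,T)\times\Omega)$ for fixed $j$; then $(\Phi^\Lambda)'(w^N)Q_j^N\to(\Phi^\Lambda)'(w)Q_j$ follows by boundedness of $(\Phi^\Lambda)'$ plus a.e.\ convergence (Vitali). The $\eta Q_{I\pm1}$ contributions are bounded by $\eta K_2\|\xi\|_\infty\sup_N\|Q^N_{I\pm1}\|_{L^1}$ using \eqref{uniform bound source in N}, so they also go into the error.

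For $Q_j^N\to Q_j$, consider a typical term $f_{j+1}^N b_{j+1}\sum_{k<N}a_kf_k^N$. I split the sum at $k\le R$ and $k>R$:
\begin{itemize}
\item For the finite part, I would use a.e.\ convergence together with uniform integrability coming from the $L^2$ bound \eqref{L2 estimate: truncated system-d} on $\sum(i+1)f_i^N$. This gives equi-integrability of the products, because $f_{j+1}^N\,(i+1)f_i^N\le(\sum(i+1)f^N)^2$ is only $L^1$-bounded. I would therefore instead use Vitali on the truncated products $f_{j+1}^N\min(f_k^N,\cdot)$, or rather exploit the sublinearity.
\item For the tail, the key estimate is $\sum_{k>R}a_kf_k^N\le a^*(R+1)^{\alpha-1}\sum(k+1)f_k^N$. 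Hence the tail in $L^1$ is $\le(R+1)^{\alpha-1}\|\sum(i+1)f^N\|_{L^2}^2\to0$ uniformly in $N$, using the assumption $\alpha<1$.
\end{itemize}
The terms involving $\sum_k b_kf_k^N$, which are of linear growth, appear multiplied by $f_{j-1}^Na_{j-1}$ or $f_j^Na_j$. Here tightness fails, and equi-integrability of $f_j^N\sum_k(k+1)f_k^N$ is the main obstacle. I would handle it as follows: these products are only weakly compact if $f_j^N$ is better than $L^2$. The plan is to absorb them into the measure. Inside the region where $(\Phi^\Lambda)'(w^N)\ne0$ we have $f_j^N\le w^N/\eta\le\Lambda^*/\eta$ for $j\in\{I,I\pm1\}$, so $(\Phi^\Lambda)'(w^N)f_j^N\sum_k b_kf_k^N$ is dominated by $C(\Lambda,\eta)\sum(k+1)f_k^N$, which is bounded in $L^2$ and hence equi-integrable. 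This is exactly why the $\eta$-coupling is introduced: it bounds the neighbours $f_{I\pm1}$ on the truncation support. Combined with a.e.\ convergence of $\sum_k b_kf_k^N$, which needs the $L^1$ compactness of the moment, the source limit passes, though the constant may depend on $\Lambda$ and $\eta$.

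\textbf{Second-order terms.} These are the $(\Phi^\Lambda)''$ terms. I expect they will not converge strongly, since they are quadratic in gradients. So I would define $\nu^{\Lambda,N}_{I,\eta}$ as the measure given by $d_I(\Phi^\Lambda)''(w^N)|\nabla w^N|^2$ plus the cross terms with $\eta(d_{I\pm1}-d_I)$. Writing $\nabla f=2\sqrt f\nabla\sqrt f$ and using $v|(\Phi^\Lambda)''(v)|\le K_1$ together with $f_j^N\le w^N/\eta$, its total variation is bounded by $C K_1 M_{\mathcal F}$ uniformly in $N$.

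I then extract a weak-$*$ limit $\nu^\Lambda_{I,\eta}$. To get $|\nu^\Lambda_{I,\eta}|\to0$ as $\Lambda\to\infty$, I would repeat the level-set decomposition of the proof of Proposition~\ref{TS: composition with the truncated smooth function}, using $\sup_{|v|\le K}|(\Phi^\Lambda)''|\to0$ and dominated convergence over the level sets. The cross terms carrying a factor $\eta$ with $\nabla f_{I\pm1}$ may instead be bounded by $\eta^{1/2}$ via Cauchy--Schwarz and included in the error. Collecting all pieces yields the stated inequality with constant $\texttt{C}$ depending on $\Lambda$.
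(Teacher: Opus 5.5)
\textbf{Gap: identifying the limit of the donor sums.} Your cutoff observation is the right one, and the paper uses it too. On the support of $(\Phi^\Lambda)'(w^N)$ you have $f_I^N,f_{I\pm1}^N\le\Lambda^*/\eta$, so each term of $(\Phi^\Lambda)'(w^N)Q_I^N$ is bounded by $C(\Lambda,\eta)\sum_k(k+1)f_k^N$ and is therefore equi-integrable in $(t,x)$. But equi-integrability only gives a weak limit. Identifying that limit fails as written for the donor-sum terms $a_If_I^NB^N$ and $a_{I-1}f_{I-1}^NB^N$, where $B^N:=\sum_{k=1}^Nb_kf_k^N$.

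For these you need $B^N\to B:=\sum_kb_kf_k$ a.e.\ or in $L^1$, and you only assert this (``needs the $L^1$ compactness of the moment''). Nothing you invoke provides it:
\begin{itemize}
\item Strong convergence of each $f_k^N$ handles the indices $k\le R$.
\item Since $b_k\ge b_*(k+1)$, the tail $\sum_{k>R}b_kf_k^N$ is comparable to $\sum_{k>R}(k+1)f_k^N$.
\item Neither the mass bound nor the $L^2$ bound on $\sum_k(k+1)f_k^N$ makes that tail small uniformly in $N$; first-moment mass may sit at sizes $k\approx N$.
\end{itemize}
If the tail does not vanish, $B^N\rightharpoonup B+\tau$ with a defect $\tau\ge0$. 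The limit identity then picks up the spurious term $(\Phi^\Lambda)'(w)(a_{I-1}f_{I-1}-a_If_I)\tau$. The sublinear-tail trick you use for $A^N=\sum_ka_kf_k^N$ has no analogue here.

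The paper's written proof has the same hole. It treats $\mathcal{L}_1^N$ via $\sup_{j\ge N^*}K_{I+1,j}/(j+1)\to0$ from \eqref{H_kernel} and calls $\mathcal{L}_3^N,\mathcal{L}_4^N$ ``similar''. For those terms, however, the relevant ratio is $K_{j,I}/(j+1)=a_Ib_j/(j+1)\ge a_Ib_*$, which does not tend to zero.

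\textbf{How to close it.} The missing ingredient is a uniform superlinear moment, which the entropy supplies.
\begin{itemize}
\item Since $\mathcal{Q}_k\le(a^*/b_*)^k(k!)^{\alpha-1}$, you have $\log(1/\mathcal{Q}_k)\ge(1-\alpha)k\log k-Ck$.
\item Split $f\log(f/\mathcal{Q}_k)=f\log(f/\sqrt{\mathcal{Q}_k})+\tfrac12 f\log(1/\mathcal{Q}_k)$ and use $f\log(f/\sqrt{\mathcal{Q}_k})\ge-e^{-1}\sqrt{\mathcal{Q}_k}$, with $\sum_k\sqrt{\mathcal{Q}_k}<\infty$.
\item Combining this with the entropy identity \eqref{entropy diss ineq RTS}, assumption \eqref{def:entropy in}, mass conservation and Fatou's lemma as $\epsilon\to0$ gives $\sup_N\int_0^T\sum_k\int_\Omega k\log(k+1)\,f_k^N\,dx\,dt<\infty$.
\item Hence $\int_0^T\int_\Omega\sum_{k>R}(k+1)f_k^N\lesssim 1/\log(R+1)$ uniformly in $N$, so $B^N\to B$ in $L^1((0,T)\times\Omega)$.
\end{itemize}
The prefactor is bounded by $K_2\Lambda^*/\eta$ and converges a.e., so the donor terms then pass to the limit.

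With this added, the rest of your outline goes through. Your direct treatment of the $\eta$-cross terms is actually cleaner than the paper's appeal to Lemma~\ref{lem:trunc-energy}. Using $\sqrt{f_{I\pm1}^N}\le\sqrt{w^N/\eta}$ and $v|(\Phi^\Lambda)''(v)|\le K_1$, you get a bound $\lesssim\eta^{1/2}K_1M_{\mathcal{F}}$ with a constant independent of $\Lambda$.

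You should drop the opening plan of proving $Q_j^N\to Q_j$ in $L^1$ without the cutoff, because products of two $L^2$-bounded sequences need not be equi-integrable. For the $\eta Q_{I\pm1}^N$ terms, the uniform bound \eqref{uniform bound source in N} is all you need.
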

\begin{proof}
    We consider the terms in Proposition \ref{TS: composition with a special truncation to identity}. The first term is always constant and it is exactly as $\displaystyle{-\int_{\Omega} \Phi^{\Lambda} \left(f_{I,0}+\eta\left(f_{I+1,0}+f_{I-1,0}\right)\right) \xi(0)\, dx}$. Following the calculation 
    \begin{align*}
        \int_0^T \int_{\Omega} & \left| \Phi^{\Lambda}\left( f_I^N+\eta\left( f_{I+1}^N+f_{I-1}^N\right)\right) \partial_t \xi - \Phi^{\Lambda}\left( f_I+\eta\left( f_{I+1}+f_{I-1}\right)\right) \partial_t \xi\right| \, dx\, dt\\
        & \leq K_2\| \partial_t \xi\|_{L^{\infty}((0,T)\times\Omega)} \int_0^T \int_{\Omega} \left | f_I^N+\eta\left( f_{I+1}^N+f_{I-1}^N\right) -f^N- \eta \left(f_{I+1}+f_{I-1}\right)\right| \to 0,
    \end{align*}
    we conclude that the second term in Proposition \ref{TS: composition with a special truncation to identity} converges to 
    \[    \displaystyle{\int_0^T\int_{\Omega}\Phi^{\Lambda}\left( f_I+\eta\left( f_{I+1}+f_{I-1}\right)\right) \partial_t \xi \, dx\, dt.}
    \]
    Next we consider the term $\displaystyle{\int_0^T \int_{\Omega} \left(\Phi^{\Lambda}\right)'\left( f_I^N+\eta\left(f_{I+1}^N+f_{I-1}^N\right)\right) Q_I^N (f^N_I) \xi \, dx\, dt.}$ We present our calculation for $I\geq 1$, for $I=0$ similar calculation follows. We like to note that, for $I\geq 1$
\begin{align}
Q_I^N:=f_{I+1}^N \sum_{j=0}^{N-1} K_{I+1,j} f_j^N
-& f_I^N \sum_{j=0}^{N-1} K_{I,j} f_j^N - f_I^N \displaystyle\sum_{j=1}^{N} K_{j,I} f_j^N
+ f_{I-1}^N \displaystyle\sum_{j=1}^{N} K_{j,I-1} f_j^N \nonumber\\
&= \mathcal{L}_1^N+\mathcal{L}_2^N+\mathcal{L}_3^N+\mathcal{L}_4^N. \label{truncated to original, limit pass, intermediate 1}
\end{align}
The following calculation holds
\begin{align}
    \int_0^T \int_{\Omega} \Bigg| &\left(\Phi^{\Lambda}\right)'\left( f_I^N+\eta\left(f_{I+1}^N+f_{I-1}^N\right)\right)f_{I+1}^N \sum_{j=0}^{N-1} K_{I+1,j} f_j^N \nonumber\\
    &-\left(\Phi^{\Lambda}\right)'\left( f_I+\eta\left(f_{I+1}+f_{I-1}\right)\right) f_{I+1} \sum_{j=0}^{\infty} K_{I+1,j} f_j\Bigg|\, dx\, dt \nonumber\\
    \leq \int_0^T \int_{\Omega} \Bigg| &\left(\Phi^{\Lambda}\right)'\left( f_I^N+\eta\left(f_{I+1}^N+f_{I-1}^N\right)\right)f_{I+1}^N \sum_{j=0}^{N-1} K_{I+1,j} f_j^N \nonumber \\
    &-\left(\Phi^{\Lambda}\right)'\left( f_I^N+\eta\left(f_{I+1}^N+f_{I-1}^N\right)\right) f_{I+1}^N \sum_{j=0}^{\infty} K_{I+1,j} f_j\Bigg|\, dx\, dt \nonumber \\
    + \int_0^T \int_{\Omega} \Bigg| &\left(\Phi^{\Lambda}\right)'\left( f_I^N+\eta\left(f_{I+1}^N+f_{I-1}^N\right)\right)f_{I+1}^N \sum_{j=0}^{\infty} K_{I+1,j} f_j \nonumber \\
    &-\left(\Phi^{\Lambda}\right)'\left( f_I+\eta\left(f_{I+1}+f_{I-1}\right)\right) f_{I+1} \sum_{j=0}^{\infty} K_{I+1,j} f_j\Bigg|\, dx\, dt. \label{truncated to original, limit pass, intermediate 2}
\end{align}
Thanks to \ref{truncation to identity}, each term in the second integral is dominated by the function \newline
$\displaystyle{K_2 \frac{\Lambda^*}{\eta}\sum_{j=0}^{\infty} K_{I+1,j} f_j}$, which is integrable thanks to moment conservation property
\[
\int_0^T \int_{\Omega}\sum_{j=0}^{\infty} K_{I+1,j} f_j \, dx\, dt \leq \sup_{j\geq 0} \left\{ \frac{K_{I+1,j}}{j+1}\right\} \int_0^T \int_{\Omega}\sum_{j=0}^{\infty} (j+1) f_j\, dx\, dt<+\infty.
\]
Furthermore thanks to the pointwise limit
\begin{align*}
\lim_{N\to\infty}&\left(\Phi^{\Lambda}\right)'\left( f_I^N+\eta\left(f_{I+1}^N+f_{I-1}^N\right)\right)f_{I+1}^N \sum_{j=0}^{\infty} K_{I+1,j} f_j \\
&=\left(\Phi^{\Lambda}\right)'\left( f_I+\eta\left(f_{I+1}+f_{I-1}\right)\right) f_{I+1}^N \sum_{j=0}^{\infty} K_{I+1,j} f_j,
\end{align*}
we obtain that the last term of \eqref{truncated to original, limit pass, intermediate 2} converges to zero. To handle  the first term of \eqref{truncated to original, limit pass, intermediate 2}, we fix an integer $N^*>0$. The following calculation holds
\begin{align}
     \int_0^T \int_{\Omega} \Bigg| &\left(\Phi^{\Lambda}\right)'\left( f_I^N+\eta\left(f_{I+1}^N+f_{I-1}^N\right)\right)f_{I+1}^N \sum_{j=0}^{N-1} K_{I+1,j} f_j^N \nonumber \\
    &-\left(\Phi^{\Lambda}\right)'\left( f_I^N+\eta\left(f_{I+1}^N+f_{I-1}^N\right)\right) f_{I+1}^N \sum_{j=0}^{\infty} K_{I+1,j} f_j\Bigg|\, dx\, dt \nonumber \\
    \leq \int_0^T \int_{\Omega} \Bigg| &\left(\Phi^{\Lambda}\right)'\left( f_I^N+\eta\left(f_{I+1}^N+f_{I-1}^N\right)\right)f_{I+1}^N \sum_{j=0}^{N^*} K_{I+1,j} f_j^N \nonumber \\
    &-\left(\Phi^{\Lambda}\right)'\left( f_I^N+\eta\left(f_{I+1}^N+f_{I-1}^N\right)\right) f_{I+1}^N \sum_{j=0}^{N^*} K_{I+1,j} f_j\Bigg|\, dx\, dt \nonumber\\
    + \sup_{j\geq N^*}\left\{ \frac{K_{I+1,j}}{j+1}\right\}\int_0^T \int_{\Omega} \Bigg| &\left(\Phi^{\Lambda}\right)'\left( f_I^N+\eta\left(f_{I+1}^N+f_{I-1}^N\right)\right)f_{I+1}^N\Bigg| \sum_{j=N^*}^{\infty} (j+1) f_j^N \, dx\, dt\nonumber\\
    + \sup_{j\geq N^*}\left\{ \frac{K_{I+1,j}}{j+1}\right\}\int_0^T \int_{\Omega} \Bigg| &\left(\Phi^{\Lambda}\right)'\left( f_I^N+\eta\left(f_{I+1}^N+f_{I-1}^N\right)\right)f_{I+1}^N\Bigg| \sum_{j=N^*}^{\infty} (j+1) f_j \, dx\, dt.\nonumber
\end{align}
Letting $N\to \infty$, the first term of the right hand side goes to zero. Hence thanks to momentum conservation property, we have that 
\begin{align*}
    \int_0^T \int_{\Omega} \Bigg| &\left(\Phi^{\Lambda}\right)'\left( f_I^N+\eta\left(f_{I+1}^N+f_{I-1}^N\right)\right)f_{I+1}^N \sum_{j=0}^{N-1} K_{I+1,j} f_j^N \nonumber \\
    &-\left(\Phi^{\Lambda}\right)'\left( f_I^N+\eta\left(f_{I+1}^N+f_{I-1}^N\right)\right) f_{I+1}^N \sum_{j=0}^{\infty} K_{I+1,j} f_j\Bigg|\, dx\, dt \nonumber \\
    & \lesssim \sup_{j\geq N^*}\left\{ \frac{K_{I+1,j}}{j+1}\right\} \to 0, \ \text{as}\ N^*\to \infty \ (\text(see)\ \eqref{H_kernel}).
\end{align*}
The above calculations along with \eqref{truncated to original, limit pass, intermediate 2}, yields that
\begin{align*}
\int_0^T \int_{\Omega} &\left(\Phi^{\Lambda}\right)'\left( f_I^N+\eta\left(f_{I+1}^N+f_{I-1}^N\right)\right) \mathcal{L}_1^N\\
&= \int_0^T \int_{\Omega} \left(\Phi^{\Lambda}\right)'\left( f_I+\eta\left(f_{I+1}+f_{I-1}\right)\right) f_{I+1} \sum_{j=0}^{\infty} K_{I+1,j} f_j\, dx\, dt.
\end{align*}
Similar calculation holds for the terms involving $\mathcal{L}_2^N, \mathcal{L}_3^N, \mathcal{L}_4^N$ also. Adding all the terms we conclude that
\begin{align*}
\int_0^T \int_{\Omega} &\left(\Phi^{\Lambda}\right)'\left( f_I^N+\eta\left(f_{I+1}^N+f_{I-1}^N\right)\right) Q_I^N(f^N) \, dx\, dt\\
&= \int_0^T \int_{\Omega} \left(\Phi^{\Lambda}\right)'\left( f_I+\eta\left(f_{I+1}+f_{I-1}\right)\right) Q_I(f) \, dx\, dt.
\end{align*}
Thanks to \eqref{uniform bound source in N}, we have that 
\[
\int_0^T \int_{\Omega} \left| Q_i^N(f^N)\right| \, dx\, dt \lesssim \sup_{i,j}\left\{\frac{K_{i,j}}{(i+1)(j+1)}\right\} \left( \frac{d^*}{d_*} \right)
\left\|
\sum_{i=0}^{\infty} (i+1)f_{i,0}
\right\|_{L^2(\Omega)}^{2},
\]
for all $i\geq 0$. Thus, we have that
\[
\left|\int_0^T \int_{\Omega} \left(\Phi^{\Lambda}\right)'\left( f_I^N+\eta\left(f_{I+1}^N+f_{I-1}^N\right)\right) \eta Q_{I\pm 1}^N(f^N) \xi \, dx\, dt\right|\lesssim \eta \| \xi\|_{L^{\infty}((0,T)\times\Omega)}. 
\]
Hence, we handled the third term of Proposition \ref{TS: composition with a special truncation to identity}. Handling the fourth term is  straightforward. We obtain the following
\begin{align*}
  \int_0^T \int_{\Omega} &d_I (\Phi^{\Lambda})' \left(f_I^N+\eta\left(f_{I+1}^N+f_{I-1}^N\right)\right) \left(\nabla f_I^N+\eta\left(\nabla f_{I+1}^N+\nabla f_{I-1}^N\right)\right)\cdot \nabla \xi \, dx\, dt\\
  &=\int_0^T \int_{\Omega} d_I (\Phi^{\Lambda})' \left(f_I+\eta\left(f_{I+1}+f_{I-1}\right)\right) \left(\nabla f_I+\eta\left(\nabla f_{I+1}+\nabla f_{I-1}\right)\right) \cdot\nabla \xi \, dx\, dt.
\end{align*}
Let us move onto the fifth term of Proposition \ref{TS: composition with a special truncation to identity}, the term reads as
\[
-\int_0^T\int_{\Omega} d_{I}(\Phi^{\Lambda})'' \left(f_I^N+\eta\left(f_{I+1}^N+f_{I-1}^N\right)\right) \left| \nabla f_I^N+\eta\left(\nabla f_{I+1}^N+\nabla f_{I-1}^N\right)\right|^2 \xi \, dx\, dt.
\]
Let us define the following radon measure on $([0,T)\times\Omega)$:
\[
\nu_{I,\eta}^{\Lambda, N}:= d_{I}(\Phi^{\Lambda})'' \left(f_I^N+\eta\left(f_{I+1}^N+f_{I-1}^N\right)\right) \left| \nabla f_I^N+\eta\left(\nabla f_{I+1}^N+\nabla f_{I-1}^N\right)\right|^2 \, dx\, dt.
\]
Let us define $F^N:= f_I^N+\eta\left( f_{I+1}^N+f_{I-1}^N\right)$. The following estimate holds
\begin{align*}
    \int_0^T\int_{\Omega} &\frac{\left|\nabla {F^N}\right|^2}{F^N}\, dx \, dt \leq 3\int_0^T\int_{\Omega} \frac{\left|\nabla {f_I^N}\right|^2}{f_I^N+\eta\left( f_{I+1}^N+f_{I-1}^N\right)}\, dx \, dt\\
    &+ 3\int_0^T\int_{\Omega} \frac{\left|\eta\nabla {f_{I+1}^N}\right|^2}{f_I^N+\eta\left( f_{I+1}^N+f_{I-1}^N\right)}\, dx \, dt+3\int_0^T\int_{\Omega} \frac{\left|\eta\nabla {f_{I-1}^N}\right|^2}{f_I^N+\eta\left( f_{I+1}^N+f_{I-1}^N\right)}\, dx \, dt\\
    &\leq 3\int_0^T\int_{\Omega} \frac{\left|\nabla {f_{I}^N}\right|^2}{f_I^N}\, dx \, dt+ 3\int_0^T\int_{\Omega} \frac{\eta\left|\nabla {f_{I+1}^N}\right|^2}{f_{I+1}^N}\, dx \, dt\\
    &+3\int_0^T\int_{\Omega} \frac{\eta\left|\nabla {f_{I-1}^N}\right|^2}{f_{I-1}^N}\, dx \, dt \leq 3(1+2\eta)M_{\mathcal{F}}.
\end{align*}
Thus $\displaystyle{\left| \nu_{I,\eta}^{\Lambda,N}\right| ([0,T)\times\Omega)\leq 3\sup_{i\geq 0}\{d_i\}K_2(1+2\eta)M_{\mathcal{F}}}$, where $K_2$ is as defined in \ref{truncation to identity}. Hence, we conclude that $\nu^{\Lambda,N}_{I,\eta}\overset{*}{\rightharpoonup} \nu^{\Lambda}_{I,\eta}$. It yields the following limit
\begin{align*}
   \lim_{N\to\infty}-\int_0^T\int_{\Omega}& d_{I}(\Phi^{\Lambda})'' \left(f_I^N+\eta\left(f_{I+1}^N+f_{I-1}^N\right)\right) \left| \nabla f_I^N+\eta\left(\nabla f_{I+1}^N+\nabla f_{I-1}^N\right)\right|^2 \xi \, dx\, dt \\
   =& -\int_0^T \int_{\Omega} \nu_{I,\eta}^{\Lambda}\xi\, dx\, dt.
\end{align*}
Next we consider the following positive measures
\[
\mu^{K,N}:= \chi_{\left\{\left|F^N\right|\in[K-1,K)\right\}}\left| \nabla \sqrt{F^N}\right|^2 \, dx\, dt,
\]
on $[0,T)\times\Omega$. We calculate the total variation of $\nu^{\Lambda,N}_{I, \eta}$. The following computation holds
\begin{align}
 |\nu^{\Lambda,N}_{I,\eta}|&([0,T)\times\Omega) \label{TS: second intermediate measure}\\ 
 &= \int_0^T\int_{\Omega}  4d_I \left|F^N \left(\Phi^{\Lambda}\right)''\left(F^N\right) \left|\nabla \sqrt{\left(F^N\right)} \right|^2 \right| \, dx\, dt   \nonumber\\
 &\leq 4\sup_{j\geq 0}\{d_j\} \times \nonumber  \sum_{K=1}^{\infty}\int_0^T\int_{\Omega}   \chi_{\left\{\left|f_{j,\epsilon}^N\right|\in[K-1,K)\right\}}\left|F^N\left(\Phi^{\Lambda}\right)''\left(F^N\right)\right| \left|\nabla \sqrt{\left(f^N\right)} \right|^2  \, dx\, dt   \nonumber\\
 & \leq 4\sup_{j\geq 0}\{d_j\} \sum_{K=1}^{\infty} \mu^{K,N} ([0,T)\times\Omega)\cdot \sup_{|v|\in[K-1,K); v\geq 0} |v|\left| \left(\Phi^{\Lambda}\right)''(v)\right|. \nonumber
\end{align}
Note that, thanks to \ref{truncation to identity}, only finitely terms survive in the sum. Let till $K=K_{max}<+\infty$ the sum survives. We have that 
\[
\sum_{K=1}^{K_{max}} \mu^{K,N}([0,T)\times\Omega)\leq \int_0^T\int_{\Omega} \left| \nabla \sqrt{F^N}\right|^2\, dx\, dt \leq 3(1+2\eta)M_{\mathcal{F}}.
\]
This further yields
\begin{align} \label{TS: third intermediate measure}
   3(1+2\eta) M_{\mathcal{F}}  \geq \limsup_{\epsilon\to 0}\sum_{K=1}^{K_{max}} \mu^{K,N}([0,T)\times\Omega).
\end{align}
The boundednes and nonnegativity of $\mu^{K,N}$ implies that there exists a subsequence (we still index it by $N$)  such that $\displaystyle{\lim_{N\to \infty}\mu^{K,N}([0,T)\times\Omega)}$ exists. Hence the above relation \eqref{TS: third intermediate measure} can be written as
\begin{align} \label{TS: third intermediate measure new 1}
     3(1+2\eta)M_{\mathcal{F}}  \geq \sum_{K=1}^{K_{max}} \lim_{N\to \infty}\mu^{K,N}([0,T)\times\Omega).
\end{align}
Using the fact that measure of open sets is lower semicontinuous with respect to weak-* convergence and \eqref{TS: second intermediate measure}, we obtain the following
\begin{align*}
    &\left| \nu_{I,\eta}^{\Lambda}\right|  ([0,T)\times\Omega)\leq \liminf_{N\to\infty} \left| \nu_{I,\eta}^{\Lambda, N} \right|([0,T)\times\Omega)\\
    &  \leq 4\sup_{j\geq 0}\{d_j\} \lim_{N\to\infty}\sum_{K=1}^{\infty} \mu^{K,N} ([0,T)\times\Omega)\cdot \sup_{|v|\in[K-1,K); v\geq 0} |v|\left| \left(\Phi^{\Lambda}\right)''(v)\right|\\
    &\leq 4\sup_{j\geq 0}\{d_j\} \sum_{K=1}^{K_{max}} \lim_{N\to\infty}\mu^{K,N} ([0,T)\times\Omega)\cdot \sup_{|v|\in[K-1,K); v\geq 0} |v|\left| \left( \Phi^{\Lambda}\right)''(v)\right|\\
    & \leq  4\sup_{j\geq 0}\{d_j\} \sum_{K=1}^{\infty}\liminf_{N\to\infty} \mu^{K,N} ([0,T)\times\Omega)\cdot \sup_{|v|\in[K-1,K); v\geq 0} |v|\left| \left( \Phi^{\Lambda}\right)''(v)\right|
\end{align*}
where we use \eqref{TS: third intermediate measure new 1} in the third line. We have that
\[
\left | \liminf_{N\to \infty} \mu^{K,N} ([0,T)\times\Omega)\cdot \sup_{|v|\in[K-1,K)} |v|\left| \left( \Phi^{\Lambda}\right)''(v)\right| \right| \leq K_1 \liminf_{N\to\infty} \mu^{K,N} ([0,T)\times\Omega),
\]
where 
\[
\sum_{K=1}^{\infty} \liminf_{N\to\infty} \mu^{K,N} ([0,T)\times\Omega) \leq \liminf_{N\to\infty}\sum_{K=1}^{\infty} \mu^{K,N} ([0,T)\times\Omega) \leq 3(1+2\eta)M_{\mathcal{F}}
\]
thanks to Fatou's lemma. Hence using dominated convergence theorem on counting measure, we obtain that
\begin{align*}
&\lim_{\Lambda\to\infty}\left| \nu_{I,\eta}^{\Lambda}\right| ([0,T)\times\Omega) \\
& \leq  4\sup_{j\geq 0}\{d_j\} \sum_{K=1}^{\infty} \liminf_{N\to\infty} \mu^{K,N} ([0,T)\times\Omega)\cdot \lim_{\Lambda\to\infty}\sup_{|v|\in[K-1,K);v\geq 0} v\left| \left(\Phi^{\Lambda}\right)''(v)\right|=0.
\end{align*}
Rest of the proof relies on showing that the rest of the terms in Proposition \ref{TS: composition with a special truncation to identity} are of the order $\eta^{\frac 12}$ when $\eta\in(0,1)$. We present our calculation for a particular term. For rest of the terms, the calculations are similar. Consider  the term
\[
\int_0^T\int_{\Omega}\eta (d_{I-1}-d_I) \left(\Phi^{\Lambda}\right)''\left(f_I^N+\eta\left( f_{I+1}^N+f_{I-1}^N\right)\right) \nabla f_{I-1}^N\cdot\nabla f_{I}^N\xi \, dx\, dt.
\]
Note that the above term in non zero only when $\displaystyle{f_I^N+\eta\left(f_{I+1}^N+f_{I-1}^N\right)\leq \Lambda^*}$, where $\Lambda^*$ is as defined in \ref{truncation to identity}. Thanks to Lemma \ref{lem:trunc-energy} and the fact that, for $0\leq i\leq N$, $f_i^N$ are the solution to the system \eqref{truncated system}, we have that 
\begin{align*}
    d_I &\int_{\left\{| f^N_I|\leq \Lambda^*\right\}} | \nabla f_I^N|^2 \, dx\, dt \leq \Lambda^* \left( \int_0^T\int_{\Omega} \left| Q_I^N(f^N)\right| \, dx\, dt+ \int_{\Omega} | f_{I,0}| \, dx\right)\\
    \lesssim & \Lambda^* \left( \sup_{i,j}\left\{\frac{K_{i,j}}{(i+1)(j+1)}\right\} \left( \frac{d^*}{d_*} \right)
\left\|
\sum_{i=0}^{\infty} (i+1)f_{i,0}
\right\|_{L^2(\Omega)}^{2} +\int_{\Omega} | f_{I,0}|\, dx\right),\\
  d_{I-1} &\int_{\left\{| f^N_{I-1}|\leq \frac{\Lambda^*}{\eta}\right\}} | \nabla f_{I-1}^N|^2 \, dx\, dt \leq \frac{\Lambda^*}{\eta} \left( \int_0^T\int_{\Omega} \left| Q_{I-1}^N(f^N)\right| \, dx\, dt+ \int_{\Omega} | f_{I-1,0}| \, dx\right)\\
    \lesssim & \frac{\Lambda^*}{\eta} \left( \sup_{i,j}\left\{\frac{K_{i,j}}{(i+1)(j+1)}\right\} \left( \frac{d^*}{d_*} \right)
\left\|
\sum_{i=0}^{\infty} (i+1)f_{i,0}
\right\|_{L^2(\Omega)}^{2} +\int_{\Omega} | f_{I-1,0}|\, dx\right),
\end{align*}
where we use the estimate \eqref{uniform bound source in N} to estimate the source term. We have the following estimate:
\begin{align*}
    \Bigg|\int_0^T&\int_{\Omega}\eta (d_{I-1}-d_I) \left(\Phi^{\Lambda}\right)''\left(f_I^N+\eta\left( f_{I+1}^N+f_{I-1}^N\right)\right) \nabla f_{I-1}^N\cdot\nabla f_{I}^N\xi \, dx\, dt\Bigg| \\
    \leq & 2\eta \sup_{j\geq 0}\{d_j\}\sup_{v\geq 0}\left| \left(\Phi^{\Lambda}\right)''(v)\right|\left(\int_{\left\{|f_{I}^N|\leq \Lambda^*\right\}} \left| \nabla f_I^N\right|^2 \, dx\, dt\right)^{\frac 12}\left(\int_{\left\{|f_{I-1}^N|\leq \frac{\Lambda^*}{\eta}\right\}} \left|\nabla f_{I-1}^N\right|^2 \, dx\, dt\right)^{\frac 12}\\
    &  \lesssim 2\eta \sup_{j\geq 0}\{d_j\}\sup_{v\geq 0}\left| \left(\Phi^{\Lambda}\right)''(v)\right| \cdot \left(\Lambda^*\right)^{\frac 12}\cdot \left(\frac{\Lambda^*}{\eta}\right)^{\frac{1}{2}} \lesssim \eta^{\frac{1}{2}}.
\end{align*}
Thus, passing to the limit \(N \to \infty\) in each term in Proposition \ref{TS: composition with a special truncation to identity}, we conclude the proof of the lemma.
\end{proof}
 In the next lemma we let the limit $\eta \to 0$.
 \begin{lem}\label{recovering the single equation}
      For $i\in \mathbb{N}\cup\{0\}$, let $f_i$ be the function  described in Lemma \ref{compactness to the solution of TS}. Let $\Phi^{\Lambda}$ be as defined in \ref{truncation to identity}. Let us fix an index $i=I$. Then for all $\xi\in C_c^{\infty}([0,T)\times\Omega)$, the following holds
    \begin{align*}
       & -\int_{\Omega} \Phi^{\Lambda} \left(f_{I,0}\right) \xi(0)\, dx-\int_0^T \int_{\Omega} \Phi^{\Lambda} \left( f_I \right) \partial_t \xi\, dx\, dt -\int_0^T \int_{\Omega} \left(\Phi^{\Lambda}\right)'\left( f_I\right) Q_I \xi \, dx\, dt\\
       &+ \int_0^T \int_{\Omega} \left(\Phi^{\Lambda}\right)'\left( f_I\right) \nabla f_I \cdot\nabla \xi \, dx\, dt + \int_0^T\int_{\Omega} \nu_{I}^{\Lambda} \xi \, dx\, dt=0,
    \end{align*}
    for some radon measure $\nu_{I}^{\Lambda}\in RM([0,T)\times\Omega)$. Here $Q_I(f)$ is as defined in \eqref{main equation} and 
    \[
    \lim_{\Lambda\to \infty} \left|\nu_{I}^{\Lambda} \right|([0,T)\times\Omega)\to 0, \quad \forall I\in\mathbb{N}.
    \]
 \end{lem}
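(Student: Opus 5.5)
The plan is to let $\eta\to0$ in the inequality of Lemma~\ref{N to infinity inside truncation}, with $\Lambda$ and $I$ fixed. The terms in that inequality are exactly those of the present statement: the diffusion term appears there as $\int_0^T\int_\Omega(\Phi^{\Lambda})'(\cdot)\nabla f_I\cdot\nabla\xi$, with no factor $d_I$, and this is the form we keep. We take a sequence $\eta_n\to0$, $\eta_n\in(0,1)$. The constant $\texttt{C}$ in Lemma~\ref{N to infinity inside truncation} depends on $\Lambda$ (through $\Lambda^*$ and $\sup|(\Phi^\Lambda)''|$) and on the uniform $L^1$ bound \eqref{uniform bound source in N}. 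We must check in its proof that it does not depend on $\eta\in(0,1)$. Granting this, the right-hand side $\texttt{C}\eta_n^{1/2}\|\xi\|_{W^{1,\infty}}$ tends to $0$, and it remains to identify the limit of each term on the left.

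\emph{Step 1 (terms without measures).} By the Lipschitz bound $|(\Phi^\Lambda)'|\le K_2$ we have
\[
|\Phi^\Lambda(f_{I,0}+\eta_n(f_{I+1,0}+f_{I-1,0}))-\Phi^\Lambda(f_{I,0})|\le K_2\eta_n(f_{I+1,0}+f_{I-1,0}),
\]
and the analogous bound holds for $f_I$. Since the data are in $L^1(\Omega)$ and $f_{I\pm1}\in L^1((0,T)\times\Omega)$ by Lemma~\ref{compactness to the solution of TS}, the first two terms converge.

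For the source term, $(\Phi^\Lambda)'$ is continuous and bounded by $K_2$, so $(\Phi^\Lambda)'(f_I+\eta_n(f_{I+1}+f_{I-1}))\to(\Phi^\Lambda)'(f_I)$ a.e. Moreover $Q_I(f)\in L^1((0,T)\times\Omega)$. This follows from the estimate \eqref{L2 estimate: main system} and the kernel bound \eqref{H_kernel}, arguing exactly as in \eqref{uniform bound source in N}. Dominated convergence then gives convergence of the source term. The same argument applies to the gradient term, since $\nabla f_I\in L^1$ and $\nabla\xi$ is bounded.

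\emph{Step 2 (the measures).} From the proof of Lemma~\ref{N to infinity inside truncation} we have
\[
|\nu^\Lambda_{I,\eta}|([0,T)\times\Omega)\le 3\sup_i d_i\,K_2(1+2\eta)M_{\mathcal F}\le 9\sup_i d_i\,K_2M_{\mathcal F},
\]
which is uniform in $\eta$. Hence a subsequence satisfies $\nu^\Lambda_{I,\eta_n}\overset{*}{\rightharpoonup}\nu^\Lambda_I$, and we define $\nu_I^\Lambda$ as this limit. Passing to the limit in the inequality then yields the stated identity with right-hand side $0$.

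\emph{Step 3 (main obstacle: $|\nu^\Lambda_I|\to0$ as $\Lambda\to\infty$).} We need a bound on $|\nu^\Lambda_{I,\eta}|$ that vanishes as $\Lambda\to\infty$ uniformly in $\eta$. The proof of Lemma~\ref{N to infinity inside truncation} supplies it through the quantity
\[
4\sup_j d_j\sum_{K\ge1} m_K(\eta)\,\sup_{v\in[K-1,K)}v|(\Phi^\Lambda)''(v)|,
\]
where $m_K(\eta)$ are the limit masses of the measures $\mu^{K,N}$. These satisfy $\sum_K m_K(\eta)\le 3(1+2\eta)M_{\mathcal F}\le 9M_{\mathcal F}$. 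By weak-* lower semicontinuity of the total variation,
\[
|\nu^\Lambda_I|([0,T)\times\Omega)\le\liminf_n|\nu^\Lambda_{I,\eta_n}|([0,T)\times\Omega).
\]
We would then pass to a further diagonal subsequence so that $m_K(\eta_n)\to m_K$ for every $K$, with $\sum_K m_K\le 9M_{\mathcal F}$ by Fatou. This gives
\[
|\nu^\Lambda_I|([0,T)\times\Omega)\le 4\sup_j d_j\sum_K m_K\sup_{v\in[K-1,K)}v|(\Phi^\Lambda)''(v)|.
\]
Each summand is dominated by $K_1m_K$, which is summable, and for fixed $K$ it tends to $0$ as $\Lambda\to\infty$ by the last property in \ref{truncation to identity}. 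Dominated convergence on the counting measure therefore gives $|\nu^\Lambda_I|([0,T)\times\Omega)\to0$.

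The delicate point is ensuring that the subsequences chosen in $\eta$ do not depend on $\Lambda$. This is the step where I expect the main difficulty. I would handle it by using the $\Lambda$-independent masses $m_K(\eta)$, which come from $F^N$ alone and not from $\Phi^\Lambda$, and by carrying out the diagonal extraction once for all $K$ before letting $\Lambda\to\infty$.
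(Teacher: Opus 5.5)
Your proposal is correct and follows the paper's route. The paper's proof simply lets $\eta\to0^+$ in Lemma~\ref{N to infinity inside truncation} and says the rest is as in that lemma; your Steps 1--3 supply exactly those details, including the $\Lambda$-independent masses $m_K(\eta)$ (exchanging $\lim_n$ with $\sum_K$ is harmless because $(\Phi^\Lambda)''$ has compact support, so only finitely many $K$ contribute for fixed $\Lambda$), and your subsequence worry is milder than you fear, since the limiting identity fixes $\int\xi\,d\nu^\Lambda_I$ for every test function, so the whole family $\nu^\Lambda_{I,\eta}$ converges weakly-$*$ and only the diagonal extraction for the $m_K$ (done once, independently of $\Lambda$) needs care; the missing factor $d_I$ in the gradient term is a typo inherited from the paper and restoring it changes nothing.
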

\begin{proof}
    We let $\eta\to 0+$ in Lemma \ref{N to infinity inside truncation}. Rest of the proof is similar to Lemma \ref{N to infinity inside truncation}.
\end{proof}
Now we are well equipped to derive the main result of this article.



\begin{proof}[Proof of Theorem \ref{main result}:]

We let $\Lambda\to \infty$ in Lemma \ref{recovering the single equation}. Note that the fifth term in Lemma \ref{recovering the single equation} will vanish as $\Lambda\to\infty$. The calculations for the first and second term are similar. We only present the calculation for the second term. We have that
\begin{align*}
    \lim_{\Lambda\to \infty}\int_0^T \int_{\Omega} \Phi^{\Lambda} &\left( f_I \right) \partial_t \xi\, dx\, dt = \lim_{\Lambda\to \infty}\int_{\left\{f_I\leq \Lambda\right\}} \Phi^{\Lambda} \left( f_I \right) \partial_t \xi\, dx\, dt\\
    +& \lim_{\Lambda\to \infty}\int_{\left\{f_I> \Lambda\right\}} \left(\Phi^{\Lambda}\left( f_I \right)-\Phi(0)\right)  \partial_t \xi\, dx\, dt\\
    & \lim_{\Lambda\to \infty}\int_{\left\{f_I> \Lambda\right\}} \Phi(0)  \partial_t \xi\, dx\, dt:= J_1+J_2+J_3.
\end{align*}
We use the fact that $\Phi^{\Lambda}(v)=v$, when $v\leq \Lambda$ (see \ref{truncation to identity}). It yields that
\[
J_1= \lim_{\Lambda\to \infty}\int_{\left\{f_I\leq \Lambda\right\}}  f_I  \partial_t \xi\, dx\, dt= \lim_{\Lambda\to \infty}\int_{\Omega} \chi_{\left\{f_I\leq \Lambda\right\}} f_I  \partial_t \xi\, dx\, dt.
\]
Thanks to dominated convergence theorem we have that
\[
J_1= \int_{\Omega} f_{I}\partial_t \xi \, dx\, dt.
\]
Note that since $\displaystyle{\int_{\Omega} f_I \, dx<+\infty}$, we have that $f_I(x)$ is finite almost everywhere. Hence using dominated convergence theorem again we have 
\[
J_3=0.
\]
Finally, we rewrite the term $J_2$ as
\begin{align*}
    J_2= \lim_{\Lambda\to\infty} \int_{\Omega} \chi_{\left\{f_I>\Lambda\right\}} \left(\Phi^{\Lambda} (f^I)-\Phi(0)\right) \partial_t\xi\, dx\, dt.
\end{align*}
The integrand converges pointwise to $0$ and dominated by the function $\displaystyle{K_2}f^I\| \partial_t\xi\|_{L^{\infty}((0,T)\times\Omega)}$ (see \ref{truncation to identity}). Hence, using dominated convergence theorem we obtain that
\[
J_3=0,
\]
which further yields
\[
\lim_{\Lambda\to \infty}\int_0^T \int_{\Omega} \Phi^{\Lambda} \left( f_I \right) \partial_t \xi\, dx\, dt = \int_0^T \int_{\Omega} f_I  \partial_t \xi\, dx\, dt. 
\]
Similarly, for the first term we have that
\[
\lim_{\Lambda\to\infty}\int_{\Omega} \Phi^{\Lambda} \left(f_{I,0}\right) \xi(0)\, dx= \int_{\Omega} f_{I,0} \xi(0)\, dx.
\]
Passing limits to third and fourth term is an application of dominated convergence theorem. We have that
\begin{align*}
&\lim_{\Lambda\to\infty}\int_0^T \int_{\Omega} \left(\Phi^{\Lambda}\right)'\left( f_I\right) Q_I \xi \, dx\, dt =\int_0^T \int_{\Omega}  Q_I \xi \, dx\, dt,\\
& \lim_{\Lambda\to\infty}\int_0^T \int_{\Omega} \left(\Phi^{\Lambda}\right)'\left( f_I\right) \nabla f_I \cdot \nabla\xi \, dx\, dt =\int_0^T \int_{\Omega}  \nabla f_I \cdot \nabla\xi \, dx\, dt,
\end{align*}
where we use the fact that $\displaystyle{\lim_{\Lambda\to\infty} \frac{d}{dv}\Phi^{\Lambda}(v)=1}$ (see \ref{truncation to identity}). Combining all, passing $\Lambda\to \infty$ in Lemma \ref{recovering the single equation}, we obtain that
\[
-\int_{\Omega} f_{I,0}\xi(0)\, dx-\int_0^T\int_{\Omega} f_I\partial_t \xi\, dx\, dt-\int_0^T\int_{\Omega} Q_I(f) \xi\, dx\, dt+\int_0^T\int_{\Omega} \nabla f_I\cdot\nabla \xi\, dx\, dt=0
\]
\end{proof}

\medskip
\textbf{Acknowledgment:}  The first author received a Postdoctoral Research Fellowship
from the Department of Atomic Energy (DAE), Government of India. The second author gratefully acknowledges support from the Anusandhan National Research Foundation (ANRF), India, through the National Post-Doctoral Fellowship (NPDF) [File No.
PDF/2025/007779].


\vspace{.3cm}
\textbf{Data Availability:} The authors shall permit all the data underlying the findings of this manuscript to be shared
by any researchers or groups who are interested in the article.

\vspace{.3cm}
\textbf{Declaration:}

\vspace{.3cm}
\textbf{Conflict of Interest:} The authors declare that there is no conflict of interest regarding the publication of this paper.


\noindent

\medskip
\bibliography{Refs.bib}

\begin{thebibliography}{10}

\bibitem{amann1995linear}
H.~Amann et~al.
\newblock {\em Linear and quasilinear parabolic problems}, volume~1.
\newblock Springer, 1995.

\bibitem{amann2005local}
H.~Amann and C.~Walker.
\newblock Local and global strong solutions to continuous
  coagulation--fragmentation equations with diffusion.
\newblock {\em Journal of Differential Equations}, 218(1):159--186, 2005.

\bibitem{bll2019}
J.~Banasiak, W.~Lamb, and {\relax Ph}.~Lauren{\c{c}}ot.
\newblock {\em Analytic methods for coagulation-fragmentation models}.
\newblock CRC Press, 2019.

\bibitem{baras1984}
P.~Baras and M.~Pierre.
\newblock Problems paraboliques semi-lineaires avec donnees measures.
\newblock {\em Applicable Analysis}, 18(1-2):111--149, 1984.

\bibitem{Naim2003}
E.~Ben-Naim and P.~L. Krapivsky.
\newblock Exchange-driven growth.
\newblock {\em Physical Review E}, 68(3):031104, 2003.

\bibitem{bothe2010}
D.~Bothe and M.~Pierre.
\newblock Quasi-steady-state approximation for a reaction--diffusion system
  with fast intermediate.
\newblock {\em Journal of Mathematical Analysis and Applications},
  368(1):120--132, 2010.

\bibitem{canizo2010absence}
J.~Canizo, L.~Desvillettes, and K.~Fellner.
\newblock Absence of gelation for models of coagulation-fragmentation with
  degenerate diffusion.
\newblock {\em Il Nuovo cimento della Societ{\`a} italiana di fisica. C},
  33(1):79, 2010.

\bibitem{canizo2010regularity}
J.~A. Ca{\~n}izo, L.~Desvillettes, and K.~Fellner.
\newblock Regularity and mass conservation for discrete
  coagulation-fragmentation equations with diffusion.
\newblock {\em Annales de l'IHP Analyse non lin{\'e}aire}, 27(2):639--654,
  2010.

\bibitem{canizo2014improved}
J.~A. Canizo, L.~Desvillettes, and K.~Fellner.
\newblock Improved duality estimates and applications to reaction-diffusion
  equations.
\newblock {\em Communications in Partial Differential Equations},
  39(6):1185--1204, 2014.

\bibitem{das2025existence}
S.~Das.
\newblock Existence of solution of a triangular degenerate reaction--diffusion
  system: S. das.
\newblock {\em Journal of Evolution Equations}, 25(2):51, 2025.

\bibitem{das2026existence}
S.~Das and R.~G. Jaiswal.
\newblock Existence for the discrete nonlinear fragmentation equation with
  degenerate diffusion.
\newblock {\em arXiv preprint arXiv:2602.14070}, 2026.

\bibitem{desvillettes2015duality}
L.~Desvillettes and K.~Fellner.
\newblock Duality and entropy methods for reversible reaction-diffusion
  equations with degenerate diffusion.
\newblock {\em Mathematical Methods in the Applied Sciences},
  38(16):3432--3443, 2015.

\bibitem{desvillettes2007global}
L.~Desvillettes, K.~Fellner, M.~Pierre, and J.~Vovelle.
\newblock Global existence for quadratic systems of reaction-diffusion.
\newblock {\em Advanced Nonlinear Studies}, 7(3):491--511, 2007.

\bibitem{ES2021}
C.~Eichenberg and A.~Schlichting.
\newblock Self-similar behavior of the exchange-driven growth model with
  product kernel.
\newblock {\em Communications in Partial Differential Equations},
  46(3):498--546, 2021.

\bibitem{E2018}
E.~Esenturk.
\newblock Mathematical theory of exchange-driven growth.
\newblock {\em Nonlinearity}, 31(8):3460--3483, 2018.

\bibitem{EV2021}
E.~Esenturk and J.~J.~L. Vel{\'a}zquez.
\newblock Large time behavior of exchange-driven growth.
\newblock {\em Discrete and Continuous Dynamical Systems}, 41(2):747--775,
  2021.

\bibitem{fellner2021uniform}
K.~Fellner, J.~Morgan, and B.~Q. Tang.
\newblock Uniform-in-time bounds for quadratic reaction-diffusion systems with
  mass dissipation in higher dimensions.
\newblock {\em Discrete \& Continuous Dynamical Systems-Series S}, 14(2), 2021.

\bibitem{JF2015}
J.~Fischer.
\newblock Global existence of renormalized solutions to entropy-dissipating
  reaction--diffusion systems.
\newblock {\em Archive for Rational Mechanics and Analysis}, 218(1):553--587,
  2015.

\bibitem{fitzgibbon2021reaction}
W.~E. Fitzgibbon, J.~J. Morgan, B.~Q. Tang, and H.-M. Yin.
\newblock Reaction-diffusion-advection systems with discontinuous diffusion and
  mass control.
\newblock {\em SIAM Journal on Mathematical Analysis}, 53(6):6771--6803, 2021.

\bibitem{IKR1998}
S.~Ispolatov, P.~L. Krapivsky, and S.~Redner.
\newblock Wealth distributions in models of capital exchange.
\newblock {\em The European Physical Journal B}, 2:267--276, 1998.

\bibitem{JZ2003}
J.~Ke and Z.~Lin.
\newblock Kinetics of migration-driven aggregation processes with birth and
  death.
\newblock {\em Physical Review E}, 67(3):031103, 2003.

\bibitem{laurencot2002continuous}
P.~Lauren{\c{c}}ot and S.~Mischler.
\newblock The continuous coagulation-fragmentation equations with diffusion.
\newblock {\em Archive for rational mechanics and analysis}, 162(1):45--99,
  2002.

\bibitem{laurenccot2002}
P.~Lauren{\c{c}}ot and S.~Mischler.
\newblock Global existence for the discrete diffusive coagulation-fragmentation
  equations in $ l^{1}$.
\newblock {\em Rev. Mat. Iberoamericana}, 18(1):731--745, 2002.

\bibitem{LR2002}
F.~Leyvraz and S.~Redner.
\newblock Scaling theory for migration-driven aggregate growth.
\newblock {\em Physical Review Letters}, 88(6):068301, 2002.

\bibitem{list1976}
R.~List and J.~R. Gillespie.
\newblock Evolution of raindrop spectra with collision-induced breakup.
\newblock {\em Journal of the Atmospheric Sciences}, 33(10):2007--2013, 1976.

\bibitem{pierre2003weak}
M.~Pierre.
\newblock Weak solutions and supersolutions in for reaction-diffusion systems.
\newblock {\em Journal of Evolution Equations}, 3(1):153--168, 2003.

\bibitem{Pierre2010}
M.~Pierre.
\newblock Global existence in reaction-diffusion systems with control of mass:
  a survey.
\newblock {\em Milan Journal of Mathematics}, 78(2):417--455, 2010.

\bibitem{quittner2007}
P.~Quittner and P.~Souplet.
\newblock {\em Superlinear parabolic problems: blow-up, global existence and
  steady states}.
\newblock Springer, 2007.

\bibitem{rothe2006}
F.~Rothe.
\newblock {\em Global solutions of reaction-diffusion systems}.
\newblock Springer, 2006.

\bibitem{safronov1972}
V.~S. Safronov.
\newblock {\em Evolution of the Protoplanetary Cloud and Formation of the Earth
  and the Planets}.
\newblock Israel Program for Scientific Translations, Jerusalem, 1972.
\newblock English translation of the 1969 Russian edition.

\bibitem{S2020}
A.~Schlichting.
\newblock The exchange-driven growth model: Basic properties and longtime
  behavior.
\newblock {\em Journal of Nonlinear Science}, 30(3):793--830, 2020.

\bibitem{si2025}
S.~Si and A.~K. Giri.
\newblock Existence and non-existence for exchange-driven growth model.
\newblock {\em Nonlinearity}, 38:125014, 2025.

\bibitem{walker2004discrete}
C.~Walker.
\newblock The discrete diffusive coagulation--fragmentation equations with
  scattering.
\newblock {\em Nonlinear Analysis: Theory, Methods \& Applications},
  58(1-2):121--142, 2004.

\bibitem{walker2005new}
C.~Walker.
\newblock {On a new model for continuous coalescence and breakage processes
  with diffusion}.
\newblock {\em Advances in Differential Equations}, 10(2):121 -- 152, 2005.

\bibitem{wrzosek2002}
D.~Wrzosek.
\newblock Mass-conserving solutions to the discrete coagulation--fragmentation
  model with diffusion.
\newblock {\em Nonlinear Analysis: Theory, Methods \& Applications},
  49(3):297--314, 2002.

\end{thebibliography}
\bibliographystyle{abbrv}
\end{document}